\newcommand{\nui}[1]{\hskip 1pt \left\| \hskip -7.5pt \left| \hskip 4pt #1 
\hskip 4pt \right| \hskip -7.5pt \right\| \hskip 1pt}
\newtheorem{fed}{Definition}[section]
\newtheorem*{fed*}{Definition}
\newtheorem*{feds*}{Definitions}
\newtheorem{teo}[fed]{Theorem}
\newtheorem*{teo*}{Theorem}
\newtheorem{lem}[fed]{Lemma}
\newtheorem{cor}[fed]{Corollary}
\newtheorem{pro}[fed]{Proposition}
\theoremstyle{definition}
\newtheorem{rem}[fed]{Remark}
\newtheorem*{rems*}{Remarks}
\newtheorem{notas}[fed]{Notations}
\def\coma{\, , \, }
\def\py{\peso{and}}
\newcommand{\peso}[1]{ \quad \text{ #1 } \quad }
\def\n0{n_{ \text{\rm \tiny o}}}
\def\varp{\varphi}
\def\vfi{\varphi}
\newcommand{\IN}[1]{\mathbb {I} _{#1}}
\def\suml{\sum\limits}
\def\QEDP{\tag*{\QED}}
\def\bce{\begin{center}}
\def\ece{\end{center}}
\DeclareMathOperator{\FP}{FP\,}
\def\cO{{\mathcal O}}
\def\py{\peso{and}}
\def\rk{\text{\rm rk}}
\def\noi{\noindent}
\def\cF{\mathcal F}
\def\cG{\mathcal G}
\def\LLM{\convfs}
\def\QED{\hfill $\square$}
\def\EOE{\hfill $\triangle$}
\def\EOEP{\tag*{\EOE}}
\def\uno{\mathds{1}}
\def\bm{\left[\begin{array}}
\def\em{\end{array}\right]}
\def\ben{\begin{enumerate}}
\def\een{\end{enumerate}}
\def\bit{\begin{itemize}}
\def\eit{\end{itemize}}
\def\barr{\begin{array}}
\def\earr{\end{array}}
\def\igdef{\ \stackrel{\mbox{\tiny{def}}}{=}\ }
\def\eps{\varepsilon}
\def\la{\lambda}
\def\N{\mathbb{N}}
\def\R{\mathbb{R}}
\def\C{\mathbb{C}}
\def\I{\mathbb{I}}
\def\T{\mathbb{T}}
\def\cC{\mathcal{C}}
\def\cH{\mathcal{H}}
\def\cK{\mathcal{K}}
\def\cT{{\cal T}}
\def\cM{{\cal M}}
\def\cB{{\cal B}}
\def\cN{{\cal N}}
\def\cV{{\cal V}}
\def\cU{{\cal U}}
\def\ca{\mathbf{a}}
\def\orto{^\perp}
\def\inc{\subseteq}
\def\inv{^{-1}}
\def\rai{^{1/2}}
\def\api{\langle}
\def\cpi{\rangle}
\def\ua{^\uparrow}
\def\da{^\downarrow}
\def\nuel{\nu(\la\coma t)}
\def\muel{\mu(\la\coma t)}
\DeclareMathOperator{\Preal}{Re} 
 \DeclareMathOperator{\tr}{tr}
\DeclareMathOperator{\gen}{span}
\DeclareMathOperator{\leqp}{\leqslant}
\DeclareMathOperator{\convf}{Conv (\R_{\ge0})}
\DeclareMathOperator{\convfs}{Conv_s (\R_{\ge0})}
\newcommand{\mat}{\mathcal{M}_d(\mathbb{C})}
\newcommand{\matsad}{\mathcal{H}(d)}
\newcommand{\matud}{\mathcal{U}(d)}
\newcommand{\matpos}{\mat^+}
\def\beq{\begin{equation}}
\def\eeq{\end{equation}}
\def\pausa{\medskip\noi}
\def\Ax2{\,( S_{E(\cF)^\#_\cV})\hat{}_x }
\newcommand{\tcal}{\T_{\C^d}(\ca)}
\newcommand{\paren}[1]{\left(#1\right)}
\newcommand{\llav}[1]{\left\{#1\right\}}
\newcommand{\norm}[1]{\left\|#1\right\|}
\newcommand{\cafo}{\cC_\ca\paren{\cF_0}}
\definecolor{rojo}{rgb}{1,0,0}
\definecolor{azul}{rgb}{0,0,1}
\begin{document}

\title{Frame completions with prescribed norms: local minimizers and applications}
\author{ Pedro G. Massey $^{*}$, Noelia B. Rios $^{*}$ and Demetrio Stojanoff 
\footnote{Partially supported by CONICET 
(PIP 0150/14), FONCyT (PICT 1506/15) and  FCE-UNLP (11X681), Argentina. } \ 
 \footnote{ e-mail addresses: massey@mate.unlp.edu.ar, nbrios@mate.unlp.edu.ar, demetrio@mate.unlp.edu.ar}
\\ {\small Depto. de Matem\'atica, FCE-UNLP
and IAM-CONICET, Argentina  }}

\date{\small {Dedicated to the memory of  Mar\'\i a Amelia Muschietti}}
\maketitle
\begin{abstract}
Let $\cF_0=\{f_i\}_{i\in\mathbb{I}_{n_0}}$ be a finite sequence of vectors in $\C^d$ and let 
$\ca=(a_i)_{i\in\mathbb{I}_k}$ be a finite sequence of positive numbers.
We consider the completions of $\cF_0$ of the form $\cF=(\cF_0,\cG)$ obtained by appending a sequence 
$\cG=\{g_i\}_{i\in\mathbb{I}_k}$ of vectors in $\C^d$ such that $\|g_i\|^2=a_i$ for $i\in\mathbb{I}_k$, 
and endow the set of completions with the metric $d(\cF,\tilde \cF)
=\max\{ \,\|g_i-\tilde g_i\|: \ i\in\mathbb{I}_k\}$ where $\tilde \cF=(\cF_0,\,\tilde \cG)$. 
In this context we show that local minimizers on the set of completions of a convex potential 
$\text{P}_\varphi$, induced by a strictly convex function $\varphi$, 
are also global minimizers. In case that $\varphi(x)=x^2$ then $\text{P}_\varphi$ is the so-called 
frame potential introduced by Benedetto and Fickus, and our work generalizes several well known 
results for this potential. We show that there is an intimate connection between frame completion 
problems with prescribed norms and frame operator distance (FOD) problems. We use this connection 
and our results to settle in the affirmative a generalized version of Strawn's conjecture on the FOD.
\end{abstract}

\noindent  AMS subject classification: 42C15, 15A60.

\noindent Keywords: frame completions, convex potentials, 
local minimum, majorization.

\tableofcontents

\section{Introduction}

A family $\cF=\{f_i\}_{i\in\I_n}\in(\C^d)^n$ is a frame for $\C^d$ if it generates $\C^d$. 
Equivalently, $\cF$ is a frame for $\C^d$ if there exist positive constants $0<A\leq B$ such that 
\beq\label{eq cotas frame}
 A\ \|f\|^2\leq \sum_{i\in\I_n} |\langle f,f_i\rangle |^2\leq B\ \|f\|^2 \peso{for} f\in\C^d\,.
\eeq
As a (possibly redundant) set of generator, a frame $\cF$ provides linear representations of vectors in $\C^d$. 
Indeed, it is well known that in this case 
\beq\label{eq for reconstr}
 f=\sum_{i\in\I_n}\langle f,g_i\rangle \ f_i=\sum_{i\in\I_n}\langle f,f_i\rangle \ g_i
\eeq
for certain frames $\cG=\{g_i\}_{i\in\I_n}$, that are the so-called dual frames of $\cF$. Thus, a vector (signal) $f\in\C^d$ can
be encoded in terms of the coefficients $(\langle f,g_i\rangle)_{i\in\I_n}$; these coefficients can be sent (one by one) through a transmission channel 
and the receiver can then reconstruct $f$, by decoding the sequence of coefficients using the reconstruction formula in Eq. \eqref{eq for reconstr}. 

\pausa
Frames are of interest in applied situations, in which their redundancy can be used to 
deal with real-life problems, such as noise in the transmission channel (leading to what is known in the literature as robust frame designs).
The stability of the reconstruction algorithm in Eq. \eqref{eq for reconstr} also plays a central role in applications of frame theory. The consideration of these features of frames motivated the introduction of 
{\it unit norm tight} frames, which are those frames for which we can choose $A=B$ in Eq. \eqref{eq cotas frame} and such that $\|f_i\|=1$ for $i\in\I_n$.
It turns out that unit norm tight frames have several optimality properties related with erasures of the frame coefficients and numerical stability of their reconstruction formula \cite{caskov,HolPau}. 

\pausa 
In the seminal paper \cite{BF} Benedetto and Fickus gave another characterization of unit norm tight frames, in terms of a convex functional known as the frame potential. Indeed, given a finite sequence $\cF=\{f_i\}_{i\in\I_n}$ in $\C^d$ then the frame potential of $\cF$, denoted $\text{FP}(\cF)$, is given by 
\beq \label{eq defi pot frame intro}
 \text{FP}(\cF)=\sum_{i\coma j\in\I_n}|\langle f_i\coma f_j\rangle |^2\,.
\eeq
Benedetto and Fickus showed that if we endow the set of unit norm frames with $n$ elements in $\C^d$ with the metric $d(\cF,\,G)=\max\{\|f_i-g_i\|:\ i\in\I_n\}$ then unit norm tight frames are characterized as local minimizers of the frame potential, and that are actually global minimizers of this functional (among unit norm frames). 
This was the first indirect proof of the existence of unit norm tight frames (for $n\geq d$).
In applications, it is sometimes useful to consider frames $\cF=\{f_i\}_{i\in\I_n}$ with norms prescribed by a sequence $\ca=(a_i)_{i\in\I_n}$ i.e. such that $\|f_i\|^2=a_i$ for $i\in\I_n$. The consideration of these families raised the question of whether there exist tight frames with (arbitrary) prescribed norms, leading to what is known as frame design problems. It turns out that a complete solution to the frame design problem can be obtained in terms of the Schur-Horn theorem, which is a central result in matrix analysis; moreover, this characterization showed that for some sequences $\ca$ there is no tight frame with norms given by $\ca$ (see \cite{Illi,CFMP,Casagregado,CMTL,ID,DFKLOW,FWW,KLagregado}).

\pausa
The absence of tight frames in the class $\mathbb F_{\C^d}(\ca)$ of frames with norms prescribed by a fixed sequence $\ca$ lead to consider 
substitutes of tight frames within this class i.e., frames in $\mathbb F_{\C^d}(\ca)$ that had some optimality properties within this class. A complete solution of the optimal design problem with prescribed norms with respect to the frame potential was given in \cite{Phys}
where the global minimizers of $\text{FP}$ were computed; moreover, the authors obtained a crucial property of these optimal 
frame designs: they showed that if we endow $\mathbb F_{\C^d}(\ca)$ with the metric
$d(\cF,\,G)=\max\{\|f_i-g_i\|:\ i\in\I_n\}$ then local minimizers of $\text{FP}$ in $\mathbb F_{\C^d}(\ca)$ are actually 
optimal designs i.e. that local minimizers are global minimizers. This generalization of the results from \cite{BF}
motivated the study of perturbation problems related with gradient descent method of the (smooth function) $\text{FP}$
in the (smooth) manifold $\mathbb F_{\C^d}(\ca)$.

\pausa
It turns out that the frame potential can be considered within the general class of 
convex potentials introduced in \cite{mr2010}. Moreover, in \cite{mr2010} it was shown 
that the optimal frame designs in $\mathbb F_{\C^d}(\ca)$ obtained in \cite{Phys} were actually global minimizers 
of every convex potential within this class. In \cite{mrs3} the authors showed further that 
local minimizers of any convex potential
induced by a strictly convex function 
are global minimizers of every convex potential within $\mathbb F_{\C^d}(\ca)$, settling in the affirmative a conjecture from \cite{mr2010}.

\pausa
In \cite{FMP2}, given an initial sequence of vectors $\cF_0$ in $\C^d$ and a sequence of positive numbers $\ca=(a_i)_{i\in\I_k}$, the authors 
posed the problem of computing the completions $\cF=(\cF_0,\cG)$ obtained by appending a sequence $\cG=\{g_i\}_{i\in\I_k}$ in $\C^d$ with norms prescribed by $\ca$, such that these completions minimize the so-called mean squared error (MSE). 
This is known as the optimal completion problem with prescribed norms for the MSE, and contains the optimal design problem with prescribed norms for the MSE as a particular case (i.e. if $\cF_0=\emptyset$). It turns out that the MSE is also a convex potential. In the series of papers
\cite{mrs1,mrs2,mrs3} a complete solution to the optimal completion problem with prescribed norms was obtained with respect to every convex potential; explicitly, the authors showed that there exists a class of completions with prescribed norms, determined by certain spectral conditions, such that the members of this class minimize simultaneously every convex potential among such completions.
This fact was independently re-obtained in 
\cite{FMaP}, in terms of a generalized Schur-Horn theorem. 
Notice that there is a natural metric in the set of completions given by $d(\cF\coma\tilde \cF)=\max\{\|g_i-\tilde g_i\|:\ i\in\I_k\}$ for $\cF=(\cF_0,\cG)$ and $\tilde \cF=(\cF_0,\tilde \cG)$. 
Yet, the structure of local minimizers of convex potentials of frame completions with prescribed norms 
was not obtained in these works, not even for Benedetto-Fickus' frame potential.

\pausa
In (the seemingly unrelated paper) \cite{strawn}, Strawn considered an approximate gradient descent method for the {\it frame operator distance} (FOD), in the smooth manifold $\T_{\C^d}(\ca)$ of sequences with norms prescribed by $\ca$. The algorithm essentially searches for critical points of the FOD. In such cases, one expects to reach - at best - local minimizers of the objective function. It is then relevant to understand the nature of local minima, as this exhibits some aspects of the numerical performance of the algorithm. Based on computational evidence, Strawn conjectured that - under some technical assumptions - local minimizers of the FOD are also global minimizers. As a motivation for studying the FOD, the author observed in \cite{strawn} that in some cases minimization of the FOD is equivalent to minimization of the frame potential in $\T_{\C^d}(\ca)$. 

\pausa
In this paper, given 
an initial sequence of vectors $\cF_0$ in $\C^d$ and a sequence of positive numbers $\ca=(a_i)_{i\in\I_k}$,
we show that any completion $\cF=(\cF_0,\cG_0)$ of $\cF_0$  
obtained by appending a sequence in $\cG_0\in\T_{\C^d}(\ca)$ (i.e. with norms prescribed by the sequence of positive numbers $\ca$) that is a local minimizer of some 
(strictly) convex potential, is a global minimizer 
of every convex potential among such completions. Thus, our results generalize those of \cite{BF,Phys,FMP,FMP2} for the frame potential and MSE, and those from \cite{FMaP,mr2010,mrs1,mrs2,mrs3} related with optimal designs/completions with prescribed norms with respect to arbitrary convex potentials.
These results suggest the implementation of (approximate) gradient descent algorithms
for computing (optimal) solutions to frame perturbation problems. As a tool we develop 
a local version of Lidskii's additive inequality, that is of independent interest. 
We apply these results to settle in the affirmative 
Strawn's conjecture on the structure of local minimizers of the frame operator distance from \cite{strawn}. We approach this conjecture by means of a translation between frame completion problems and FOD problems. Moreover, we compute distances between certain sets of positive semidefinite matrices that generalize the FOD, in terms of arbitrary unitarily invariant norms.

\pausa
The paper is organized as follows. In Section \ref{sec prelis} we introduce the notation and terminology as well as some results from matrix analysis and frame theory used throughout the paper. We have included section \ref{sec feas} in which we summarize several results from \cite{mrs1,mrs3} for the benefit of the reader. In
Section \ref{sec 3 tutti} we show some features of local minimizers of convex potentials within the set of frame completions with prescribed norms. Our approach is based on a local Lidskii's theorem that we describe in Section \ref{sec loc lid} (we delay its proof to Section \ref{appendicity} - Appendix). We use this result in Sections \ref{sec geo loc min} and \ref{estruc interior} to obtain geometrical and spectral properties of local minimizers. In section \ref{sec loc son glob} we prove our main result, namely that local minimizers of strictly convex potentials within the set of frame completions with prescribed norms are also global minimizers. In Section \ref{sec aplicados} we apply the main result to prove (a generalized version of) Strawn's conjecture on local minima of the frame operator distance. The paper ends with Section \ref{appendicity} (Appendix) in which we show a local version of Lidskii's inequality.

\section{Preliminaries}\label{sec prelis}

In this section we introduce the notations, terminology and results from matrix analysis and frame theory
that we will use throughout the 
paper. General references for these results are the texts \cite{Bhat} and \cite{TaF,FinFram, Chr}.

\subsection{Preliminaries from matrix analysis}\label{sec2.2}
\pausa
In what follows we adopt the following 

\pausa
{\bf Notations and terminology}. We let $\mathcal M_{k,d}(\C)$ be the space of complex $k\times d$ matrices and write $\mathcal M_{d,d}(\C)=\mat$ for the algebra of $d\times d$ complex matrices. We denote by $\matsad\subset \mat$ the real subspace of selfadjoint matrices and by $\matpos\subset \matsad$ the cone of positive semidefinite matrices. We let $\matud\subset \mat$ denote the group of unitary matrices.
For $d\in\N$, let $\I_d=\{1,\ldots,d\}$. Given $x=(x_i)_{i\in\I_d}\in\R^d$ we denote by $x\da=(x_i\da)_{i\in\I_d}$ (respectively $x\ua=(x_i\ua)_{i\in\I_d}$) the vector obtained by rearranging the entries of $x$ in non-increasing (respectively non-decreasing) order. We denote by $(\R^d)\da=\{x\da:\ x\in\R^d\}$, $(\R_{\geq 0}^d)\da=\{x\da:\ x\in\R_{\geq 0}^d\}$ and analogously $(\R^d)\ua$ and $(\R_{\geq 0}^d)\ua$. Given a matrix $A\in\matsad$ we denote by $\la(A)=\la\da(A)=(\la_i(A))_{i\in\I_d}\in (\R^d)\da$ the eigenvalues of $A$ counting multiplicities and arranged in 
non-increasing order, and by $\la\ua(A)$ the same vector  but ordered 
in non-decreasing order. For $B\in\mat$ we let $s(B)=\la(|B|)$ denote the singular values of $B$, i.e. the eigenvalues of $|B|=(B^*B)^{1/2}\in\matpos$.
If $x,\,y\in\C^d$ we denote by $x\otimes y\in\mat$ the rank-one matrix given by $(x\otimes y) \, z= \langle z\coma y\rangle \ x$, for $z\in\C^d$.

\pausa Next we recall the notion of majorization between vectors, that will play a central role throughout our work.
\begin{fed}\rm 
Let $x\in\R^k$ and $y\in\R^d$. We say that $x$ is
{\it submajorized} by $y$, and write $x\prec_w y$,  if
$$
\suml_{i=1}^j x^\downarrow _i\leq \suml_{i=1}^j y^\downarrow _i \peso{for every} 1\leq j\leq \min\{k\coma d\}\,.
$$
If $x\prec_w y$ and $\tr x = \sum_{i=1}^kx_i=\sum_{i=1}^d y_i = \tr y$,  then $x$ is
{\it majorized} by $y$, and write $x\prec y$.
\EOE
\end{fed}
\pausa Given $x,\,y\in\R^d$ we write
$x \leqp y$ if $x_i \le y_i$ for every $i\in \mathbb I_d \,$.  It is a standard  exercise 
to show that $x\leqp y \implies x^\downarrow\leqp y^\downarrow  \implies x\prec_w y $.

\pausa 
Although majorization is not a total order in $\R^d$, there are several fundamental inequalities in 
matrix theory that can be described in terms of this relation. As an example of this phenomenon we can consider 
Lidskii's (additive) inequality (see \cite{Bhat}). In the following result we also include the characterization of the case of equality obtained in \cite{mrs2}.

\begin{teo}[Lidskii's inequality]\label{mrs284}\rm
Let $A,\,B\in \matsad$ with eigenvalues $\la(A)$, $\la(B)\in (\R^d)^\downarrow$ respectively. Then 
\ben
\item $\la\ua(A)+\la\da(B) \prec \la(A+B)$.
\item If $\la(A+B)=\paren{\la(A)+\la^{\uparrow}(B)}^{\downarrow}$ then
there exists 
$\{v_i\}_{i\in\I_d}$ an ONB of $\C^d$ such that 
\beq
A=\sum_{i\in\I_d}\la_i (A) \ v_i\otimes v_i \py B=\sum_{i\in\I_d}\la_i\ua (B)\ v_i\otimes v_i\  .
\QEDP 
\eeq\een
\end{teo}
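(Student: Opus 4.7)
The plan is to deduce (1) as a standard consequence of the classical Lidskii additive inequality, and to prove the rigidity statement in (2) by reducing to the equality case of von Neumann's trace inequality for Hermitian matrices via an elementary squared-trace identity.

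For (1), the majorization $\lambda^\uparrow(A) + \lambda^\downarrow(B) \prec \lambda(A+B)$ is a standard consequence of the classical Lidskii additive inequality. A direct derivation proceeds via Ky Fan's variational formula for sums of eigenvalues, combined with the Wielandt--Lidskii inequality for arbitrary index sets (see Bhatia). The trace-equality part of the majorization is automatic from $\tr(A+B) = \tr A + \tr B$, so the argument reduces to producing the partial-sum inequalities for the sorted version of $\lambda^\uparrow(A) + \lambda^\downarrow(B)$.

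For (2), my approach is to exploit the identity
\[
\sum_{i\in\I_d}\lambda_i(A+B)^2 \ =\ \tr\bigl((A+B)^2\bigr)\ =\ \sum_{i\in\I_d}\lambda_i(A)^2 + \sum_{i\in\I_d}\lambda_i(B)^2 + 2\tr(AB)
\]
together with the Hermitian version of von Neumann's trace inequality,
\[
\tr(AB) \ \geq\ \sum_{i\in\I_d}\lambda_i(A)\,\lambda^\uparrow_i(B).
\]
Combining these two relations yields $\sum_i \lambda_i(A+B)^2 \geq \sum_i\bigl(\lambda_i(A) + \lambda^\uparrow_i(B)\bigr)^2$. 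Under the equality hypothesis $\lambda(A+B) = (\lambda(A) + \lambda^\uparrow(B))^\downarrow$, the two sides of this squared-sum inequality coincide (a sum of squares is invariant under permutation of its arguments), and so equality must hold throughout. In particular $\tr(AB) = \sum_i \lambda_i(A)\,\lambda^\uparrow_i(B)$, i.e.\ equality in the Hermitian trace inequality. Its equality case is classical and asserts precisely the existence of an ONB $\{v_i\}_{i\in\I_d}$ of $\C^d$ simultaneously diagonalizing $A$ and $B$ with $Av_i = \lambda_i(A)\,v_i$ and $Bv_i = \lambda^\uparrow_i(B)\,v_i$, which is exactly the conclusion of (2).

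The main obstacle is the correct invocation of the equality case of the Hermitian trace inequality when $A$ or $B$ has repeated eigenvalues: the common diagonalizing basis is not unique, and one must select eigenvectors so that the prescribed ordering of $\lambda(A)$ against $\lambda^\uparrow(B)$ is respected. This is handled by a standard peeling over the distinct eigenvalues of $A+B$ (or of $A$), and amounts to careful bookkeeping rather than a genuine new difficulty. A secondary, smaller point is the translation between the classical Lidskii formulation $\lambda^\downarrow(A+B) - \lambda^\downarrow(A) \prec \lambda(B)$ and the additive form used in (1); this is routine.
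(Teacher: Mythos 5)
Your proposal is correct, but it cannot be ``the same as the paper's proof'' because the paper does not prove Theorem \ref{mrs284} at all: item 1 is quoted from Bhatia and the equality characterization in item 2 is quoted from \cite{mrs2}, so the result enters as a preliminary with a citation rather than with an internal argument. Your route for item 2 is a legitimate self-contained alternative: expanding $\tr\bigl((A+B)^2\bigr)=\tr(A^2)+\tr(B^2)+2\tr(AB)$ and using $\tr(AB)\ge\sum_{i\in\I_d}\la_i(A)\,\la_i\ua(B)$ shows that the hypothesis $\la(A+B)=\bigl(\la(A)+\la\ua(B)\bigr)\da$ (sums of squares being permutation invariant) forces equality in the trace inequality, and the equality case of that inequality yields exactly the simultaneously diagonalizing ONB with the prescribed pairing. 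What this buys is that the rigidity statement is reduced to a much more elementary equality case than Lidskii's theorem itself, and it does not even use item 1. The only place where real care is needed is the one you flag: the equality case of the Hermitian trace inequality. For completeness you should either cite it precisely or prove it; a short argument is that equality means $U=I$ maximizes $U\mapsto\tr\bigl(A\,U(-B)U^*\bigr)$ over $\matud$, whence the first-order condition gives $[A,B]=0$, and then in a common eigenbasis the pairing $(\alpha_i,\beta_i)$ of eigenvalues attains the rearrangement extremum, so $\alpha_i>\alpha_j\Rightarrow\beta_i\le\beta_j$, and reordering within eigenspaces (your ``peeling'') produces the basis with $Av_i=\la_i(A)v_i$ and $Bv_i=\la_i\ua(B)v_i$. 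With that supplied, the proof is complete; item 1, as in the paper, is the standard corollary $\la\da(A)+\la\ua(B)\prec\la(A+B)$ of classical Lidskii and needs no new idea.
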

\pausa
Recall that a norm $\nui{\cdot}$ in $\mat$ is unitarily invariant if 
$$ \nui{\,U\,A\,V\,}=\nui{A} \peso{for every} A\in\mat \py U,\,V\in\matud\,.$$
Examples of unitarily invariant norms (u.i.n.) are the spectral norm $\|\cdot\|$ and the $p$-norms $\|\cdot\|_p$, for $p\geq 1$.
It is well known that majorization is intimately related with tracial inequalities of convex functions and also with 
inequalities with respect to u.i.n's. 
The following result summarizes these relations (see for example \cite{Bhat}):

\begin{teo}\label{teo intro prelims mayo}
Let $x,\,y\in \R^d$ and let $A,\,B\in\mat$. If
$\varphi:I\rightarrow \R$ is a 
convex function defined on an interval $I\inc \R$ such that 
$x,\,y,\,s(A),\,s(B)\in I^d$ then: 
\ben 
\item If $x\prec y$, then 
$ 
\tr \varphi(x) \igdef\suml_{i\in\IN{d}}\varphi(x_i)\leq \suml_{i\in\IN{d}}\varphi(y_i)=\tr \varphi(y)\ .
$
\item If only $x\prec_w y$,  but $\varphi$ is an increasing function, then  still 
$\tr \varphi(x) \le \tr \varphi(y)$. 
\item If $x\prec y$ and $\varphi$ is a strictly convex function such 
that $\tr \,\varphi(x) =\tr \, \varphi(y)$ then there exists a permutation $\sigma$ 
of $\IN{d}$ such that $y_i=x_{\sigma(i)}$ for $i\in \IN{d}\,$, i.e. 
$x\da=y\da$. 
\item If $s(A)\prec_w s(B)$ then $\nui{A}\leq \nui{B}$, for every u.i.n. $\nui{\cdot}$ defined on $\mat$. 
\een \qed
\end{teo}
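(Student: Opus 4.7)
All four statements are classical consequences of the majorization calculus and are fully worked out in \cite{Bhat}; my plan is to prove them in the order (1), (2), (3), (4), each piece feeding into the next. For (1), the standard route is via the Birkhoff--von Neumann theorem: $x\prec y$ is equivalent to the existence of a doubly stochastic matrix $D$ with $x=Dy$, and any such $D$ decomposes as a convex combination $D=\sum_k \lambda_k P_k$ of permutation matrices. Then for each index $i$, Jensen's inequality applied to the convex combination $x_i=\sum_k\lambda_k(P_k y)_i$ yields $\varphi(x_i)\le \sum_k\lambda_k\,\varphi((P_k y)_i)$. Summing over $i$ and using that each $P_k$ merely permutes the coordinates of $y$ telescopes the right hand side to $\sum_i\varphi(y_i)$, which is the desired inequality.

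For (2), I would reduce to (1) via the classical auxiliary lemma that $x\prec_w y$ implies the existence of $z\in\R^d$ with $z\prec y$ and $x^\downarrow \le z^\downarrow$ componentwise. Once such a $z$ is in hand, monotonicity of $\varphi$ forces $\tr\varphi(x)\le\tr\varphi(z)$ (the sum being invariant under rearrangement of its argument), and then part (1) applied to $z\prec y$ closes the argument. The construction of $z$ is a short induction that absorbs the non-negative deficit $\tr y-\tr x$ into the smallest entries of $x$ while maintaining all the partial-sum inequalities.

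For (3), the Birkhoff decomposition is again the right tool. Under strict convexity, Jensen's inequality for the convex combination $x_i=\sum_k\lambda_k(P_k y)_i$ is an equality precisely when all the entries $(P_k y)_i$ corresponding to $\lambda_k>0$ coincide. Hence the hypothesis $\tr\varphi(x)=\tr\varphi(y)$ forces, for each $i$, the coordinate $x_i$ to equal a single common value taken from the coordinates of $y$; counting multiplicities then yields $x^\downarrow=y^\downarrow$, which is the stated conclusion.

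Finally, (4) is Ky Fan's dominance principle. By von Neumann's representation, every unitarily invariant norm on $\mat$ has the form $\nui{A}=\Phi(s(A))$ for a symmetric gauge function $\Phi$, and symmetric gauge functions are monotone under weak majorization on the non-negative orthant. Equivalently, the Ky Fan $k$-norms $\|A\|_{(k)}=\sum_{j=1}^k s_j(A)$ characterize weak majorization of singular values, and Ky Fan's dominance theorem asserts that they dominate every unitarily invariant norm. Either route delivers $\nui{A}\le\nui{B}$ from $s(A)\prec_w s(B)$. The only real subtlety across the whole theorem lies in the equality analysis (3), where one has to keep careful track of which permutation matrices appear with positive weight in the Birkhoff decomposition; everything else is linear bookkeeping.
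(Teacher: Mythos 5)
Your proposal is correct and follows exactly the standard arguments (Birkhoff--von Neumann plus Jensen, the weak-majorization lifting lemma, the equality case of Jensen, and Ky Fan dominance / symmetric gauge functions) found in \cite{Bhat}, which is precisely the reference the paper cites for this theorem -- the paper itself offers no proof beyond that citation. The one point worth stating explicitly in item (3) is that once each coordinate satisfies $x_i=(P_k y)_i$ for every $k$ with $\lambda_k>0$, any single such $P_k$ already gives $x=P_k y$, so $x$ is a permutation of $y$ and $x\da=y\da$ follows at once.
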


\subsection{Frames and frame completions with prescribed norms}

\pausa
In what follows we adopt the following notations and terminology from frame theory.

\pausa
{\bf Notations and terminology}: let $\cF=\{f_i\}_{i\in\I_k}$ be a finite sequence in $\C^d$. Then,
\ben
\item $T_\cF\in \cM_{d,k}(\C)$ denotes the synthesis operator of $\cF$ given by $T_\cF\cdot(\alpha_i)_{i\in\I_k}=\sum_{i\in\I_k}\alpha_i\, f_i$.
\item $T_\cF^*\in \cM_{k,d}(\C)$ denotes the analysis operator of $\cF$ and it is given by $T_\cF^*\cdot f=(\langle f,f_i\rangle)_{i\in\I_k}$.
\item  $S_\cF\in \matpos$ denotes the frame operator of $\cF$ and it is given by $S_\cF=T_\cF\,T_\cF^*$. Hence, $Sf=\sum_{i\in\I_k} \langle f,f_i\rangle f_i=\sum_{i\in\I_k} f_i\otimes f_i (f)$ for $f\in\C^d$. 
\item We say that $\cF$ is a frame for $\C^d$ if it spans $\C^d$; equivalently, $\cF$ is a frame for $\C^d$ if $S_\cF$ is a positive invertible operator acting on $\C^d$.
\een

\pausa
In several applied situations it is desired to construct 
a sequence $\cG$ in $\C^d$, in
such a way that the  frame 
operator of $\cG$ is given by some positive operator  $B\in\matpos$ and the squared norms of the frame elements 
are prescribed by a sequence of positive numbers  $\ca=(a_i)_{i\in\I_k}$. 
This is known as the classical 
frame design problem and it has been studied by several research groups (see for example \cite{Illi,CFMP,Casagregado,CMTL,ID,DFKLOW,FWW,KLagregado}). The following result 
characterizes the existence of such frame design in terms of majorization relations.

\begin{pro}[\cite{Illi,MR0}]\label{frame mayo}\rm
Let $B\in\matpos$ with eigenvalues $\lambda(B)=(\lambda_i)_{i\in\I_d}\in (\R_{\ge0}^d)\da$ and consider  
$\ca=(a_i)_{i\in\I_k}\in(\R_{>0}^k)\da$. Then there exists 
a sequence $\cG=\{g_i\}_{i\in\I_k}$ in $\C^d$ with frame operator 
$S_\cG= B$ such that $\|g_i\|^2=a_i$ for $i\in\I_k$  
if and only if $\ca\prec\la(B)$ i.e. 
\beq
\sum_{i\in\I_j} a_i\leq \sum_{i\in\I_j}\lambda_i 
 \ \ ,\  \peso{for} 1\leq j\leq \min\{k\coma d\} \qquad \text{ and } \qquad \sum_{i\in\I_k}a_i= \sum_{i\in\I_d}\lambda_i \, .
\QEDP
\eeq

\end{pro}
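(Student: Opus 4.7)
The plan is to prove the two implications separately, in each case relying on one half of the Schur--Horn theorem applied to the Gram matrix of $\cG$.

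For the direct implication ($\Rightarrow$), suppose such a sequence $\cG = \{g_i\}_{i\in\I_k}$ exists and let $T_\cG \in \cM_{d,k}(\C)$ be its synthesis operator. Then $T_\cG T_\cG^* = S_\cG = B$, so the Gram matrix $T_\cG^* T_\cG \in \cM_k(\C)$ shares the nonzero spectrum of $B$; concretely, its spectrum, viewed in $\R^k$, consists of the entries of $\la(B)$ padded (or truncated) with zeros to length $k$. On the other hand, its diagonal is precisely $(\|g_i\|^2)_{i\in\I_k} = \ca$. Schur's classical majorization theorem (the diagonal of a Hermitian matrix is majorized by its spectrum), applied in $\cM_k(\C)$, then gives $\ca \prec$ (the padded spectrum) in $\R^k$. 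Translating back, this yields both the partial-sum inequalities up to $\min\{k,d\}$ and the trace identity $\sum_{i\in\I_k} a_i = \tr(B) = \sum_{i\in\I_d} \la_i$.

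For the converse ($\Leftarrow$), assume $\ca \prec \la(B)$. One first reduces to the case $k \geq d$: if $k < d$, setting $j = k$ in the majorization inequality and combining with the trace equality forces $\la_i(B) = 0$ for all $i > k$, so $\rk(B) \leq k$ and the problem may be solved inside the range of $B$. Assuming from now on that $k \geq d$, let $\tilde\la \in \R^k$ denote $\la(B)$ padded with $k-d$ zeros; the hypothesis translates to full majorization $\ca \prec \tilde\la$ in $\R^k$. The Horn (converse) half of the Schur--Horn theorem then produces a positive semidefinite matrix $C \in \cM_k(\C)$ with diagonal $\ca$ and spectrum $\tilde\la$. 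Diagonalize $C = V\, \operatorname{diag}(\tilde\la)\, V^*$ with a unitary $V$, and choose a unitary $U \in \cM_d(\C)$ with $B = U\,\operatorname{diag}(\la(B))\,U^*$. Setting
$$
T \;=\; U \,\operatorname{diag}(\la(B))^{1/2}\, J\, V^* \;\in\; \cM_{d,k}(\C),
$$
where $J = [\,I_d \mid 0\,] \in \cM_{d,k}(\C)$ is the natural projection onto the first $d$ coordinates, the identities $JJ^* = I_d$ and $J^*\operatorname{diag}(\la(B))J = \operatorname{diag}(\tilde\la)$ yield $TT^* = B$ and $T^*T = C$ by direct computation. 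Declaring $\cG = \{g_i\}_{i\in\I_k}$ to be the columns of $T$ then gives $S_\cG = TT^* = B$ and $\|g_i\|^2 = (T^*T)_{ii} = a_i$ for every $i \in \I_k$.

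The main obstacle is the bookkeeping forced by the potential mismatch between $k$ and $d$: one has to carefully pad (or truncate) eigenvalue vectors with zeros and verify that the paper's definition of majorization (with inequalities only up to $\min\{k,d\}$ together with a trace equality) is equivalent to full Hardy--Littlewood--Pólya majorization in the common ambient $\R^{\max\{k,d\}}$, so that the Schur--Horn machinery applies uniformly in both halves of the proof.
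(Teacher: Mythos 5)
Your argument is correct, and it is essentially the canonical one: the paper states Proposition \ref{frame mayo} without proof, citing \cite{Illi,MR0}, and those references derive it exactly the way you do, by passing to the Gram matrix $T_\cG^*T_\cG$ and invoking the two halves of the Schur--Horn theorem (Schur for necessity, Horn plus a factorization $T=U\,\mathrm{diag}(\la(B))^{1/2}JV^*$ for sufficiency), with the same zero-padding/truncation bookkeeping and the rank reduction in the case $k<d$. No gaps; the verification $TT^*=B$, $T^*T=C$ and the equivalence of the paper's truncated majorization with full majorization in $\R^{\max\{k,d\}}$ all check out.
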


\pausa Recently, researchers have asked about the structure of {\it optimal frame completions with prescribed norms}. Explicitly,
let $\cF_0=\{f_i\}_{i\in\I_{n_0}}$ be a fixed (finite) sequence of vectors in $\C^d$, consider a sequence $\ca
= (a_i)_{i\in\I_k}\in(\R_{>0}^k)\da$ 
 and denote by $n=n_0+k$.  Then, 
with this fixed data, the problem is to construct a sequence  
$$
\cG = \{g_i\}_{i=1}^k \peso{with} \|g_{i}\|^2=a_i \peso{for}
i\in\I_k\ , 
$$
such that the resulting completed sequence $\cF= (\cF_0\coma \cG )$ - obtained by appending the sequence $\cG$ to $\cF_0$ - is such that the eigenvalues of the frame operator of  $\cF$ are as concentrated as possible: thus, ideally, we would search for completions $\cG$ such that $\cF=(\cF_0 \coma \cG )$ 
is a tight frame i.e. such that $S_{\cF}=c\,I$ for some $c>0$. Unfortunately, it is well known that there might not exist such completions
(see \cite{FWW,FMP,FMP2,MR0,mrs1,mrs2,mrs3}). 
We could measure optimality in terms of the frame potential i.e., 
we search for a frame 
$\cF= (\cF_0\coma \cG )$, with
$\|g_{i}\|^2=a_i$ for $1\leq i\leq k$, and such that its frame potential 
$\FP(\cF)=\tr \,S_\cF ^2 $ is minimal  
among all possible such completions; 
alternatively, we could measure optimality in terms of the so-called mean squared error (MSE) of the completed sequence $\cF$ i.e. MSE$(\cF)=\tr(S_\cF^{-1})$ (see \cite{FMP2}).  
 More generally, we can measure stability of the completed frame $\cF=(\cF_0,\cG)$ 
in terms of general convex potentials. In order to introduce these potentials we consider the sets
$$
\convf = \{ 
\varphi:\R_{\geq 0} \rightarrow \R_{\geq 0}\ :\  \varphi   \ \mbox{ is a convex function} \ \} 
$$  
and $\convfs = \{\varphi\in \convf : \varphi$ is strictly convex $\}$. 

\begin{fed}\label{pot generales}\rm
Following \cite{mr2010} we consider the 
(generalized) convex potential $\text{P}_\varphi$ associated to $\varphi\in \convf$, given by
$$
\barr{rl}
\text{P}_\varphi(\cF)&=\tr \, \varp(S_\cF) = \sum_{i\in\I_d}\varphi(\lambda_i(S_\cF)\,) \peso {for} 
\cF=\{f_i\}_{i\in\I_n}\in  (\C^d)^n \ , \earr
$$
where the matrix $\varp(S_\cF)$ is defined by means of the usual functional calculus. \EOE
\end{fed}
\pausa
Convex potentials allow us to model several well known measures of stability considered in frame theory. For example,
in case $\varp(x)=x^2$ for $x\in\R_{\geq 0}$ then $\text{P}_\varphi$ is the Benedetto-Fickus frame potential; in case 
$\varp(x)=x^{-1}$ for $x\in \R_{>0}$ then $\text{P}_\varphi$ is known as the mean squared error (MSE).

\pausa
We can now give a detailed description of the optimal completion problem with prescribed norms with respect to convex potentials.

\begin{notas}\label{rem prob com con norm predet}
Let $\cF_0=\{f_i\}_{i\in\I_{n_0}}\in (\C^d)^{n_0}$ and $\ca=(a_i)_{i\in\I_k}\in(\R_{>0}^k)\da$. 
\ben
\item 
Let $\tcal = \llav{ \cG=\{g_i\}_{i\in\I_k} \in  (\C^d)^{k} \  : 
\norm{g_{i}}^2=a_i  \mbox{ for every }   i \in \IN{k}}$
\item 
We consider the set $\cafo$ of completions of $\cF_0$ with norms prescribed by the sequence $\ca$ given by
$$
\cafo 
=\llav{\cF=  (\cF_0 \coma \cG) \in  (\C^d)^{n_0+k} \  : 
\cG \in \tcal} \,.
$$
\item For $\varphi\in\convf$ we consider the optimal frame 
completions $\cF^{\rm op}=(\cF_0,\cG^{\rm op})\in\cafo$ with respect to the convex potential P$_\varphi$ i.e. 
such that 
\beq
\text P_\varphi(\cF^{\rm op})=\min\{ \ \text P_\varphi(\cF):\  \cF\in\cafo \ \} \,. \EOEP \eeq 
\een
\end{notas}
\pausa
Consider the Notations \ref{rem prob com con norm predet} above.
In the series of papers \cite{mrs1,mrs2,mrs3} the spectral and geometrical structure of optimal frame completions $\cF^{\rm op}=(\cF_0,\cG^{\rm op})\in\cafo$  was completely described,
in case $\varphi\in\convfs$ (see Theorems \ref{teo princ jfaa1} and \ref{teo princ jfaa2} below); indeed, in this case it was shown that if $\cF^{\rm op}\in \cafo$ is optimal with respect to P$_\varphi$ in $\cafo$, then $\cF^{\rm op}$ is also optimal in $\cafo$ with respect to any other convex potential.

\subsection{Optimal frame completions with prescribed norms: feasible cases}\label{sec feas}

In this subsection we consider several concepts related with the notion of feasible index introduced in \cite{mrs3}. The feasible indexes (see Definition \ref{s feas}) 
will play a key role in our study of frame completions that are local minima of strictly convex potentials.

\begin{fed}\label{defi nuel} \rm 
Let $\la=(\la_i)_{i\in\I_d}\in (\R_{\geq 0}^d)\ua$ and fix $t\in \R_{>0}\,$. 
\ben
\item Consider the function $h_\la:[\la_1\coma \infty)\rightarrow  [0\coma \infty)$ given by 
$$
h_\la(x)=\sum_{i\in\I_d}(x-\la_i)^+ \  , \peso{for every} x\in [\la_1\coma \infty)\ ,
$$ 
where
$y^+=\max\{y,\,0\}$ stands for the positive part of $y\in\R$. 
It is easy to see that  $h_\la$ is continuous, strictly increasing, such that $h_\la(\la_1)=0$ 
and $\lim\limits_{x\rightarrow +\infty}h_\la(x)=+\infty$. 
\item Therefore $h_\la \inv :[0\coma \infty)\rightarrow [\la_1\coma \infty) $ is well defined and bijective; hence, there exists a unique 
\beq\label{el c}
c=c(t)>\la_1\geq 0 \peso{such that}  h_\la(c(t))=t>0 \ . 
\eeq
\item Let $c=c(t)>\la_1\geq 0$ be as in Eq. \eqref{el c}. 
Then we set \rm
\beq\label{def nuel}
\nuel \igdef \big(\ (c-\la_1)^++\la_1\coma \ldots \coma (c-\la_d)^++\la_d\,\big)\in(\R_{>0}^d)\ua\ .
\eeq
\item 
We now consider the vector 
\beq\label{def muel}
\muel  \igdef \big(\ (c-\la_1)^+\coma \ldots \coma (c-\la_d)^+\,\big) 
 = \nuel - \la  \in(\R_{\ge0}^d)\da\ .
\eeq
We remark the fact that, since $\la \in (\R_{\ge 0}^d)\ua$, 
then $\muel \in(\R_{\ge0}^d)\da$. 
\EOE
\een
\end{fed}

\begin{rem}\label{la pinta del nuel}
Let $\la=(\la_i)_{i\in\I_d}\in (\R_{\geq 0}^d)\ua$, 
let $t>0$ and let $c=c(t)>\la_1\geq 0$ be as in Eq. \eqref{el c}. 
Notice that by construction, we have that 
$$
\tr \nuel=\sum_{i\in\I_d} (c-\la_i)^++\la_i=\tr \la + h_\la(c)=\tr\la + t\ .
$$ 
On the other hand, since $(c-a)^++a=\max\{c,\,a\}$ we see that:
\begin{enumerate}
\item If $ c<\la_d$ then there exists a unique $r\in\I_{d-1}$ such that, if we let $\uno_r=(1,\ldots,1)\in\R^r$ then 
\beq\label{eq nuel no constante}
\nuel=(c\,\uno_r \coma \la_{r+1}\coma \ldots\coma \la_d)\in(\R_{>0}^d)\ua \peso{with} \la_r\leq c<\la_{r+1}\,.
\eeq
In this case, $\tr\la+t=\tr \nuel< d\, \la_d$ and then $\la_d>\frac{\tr\la+t}{d}\ $.
\item Otherwise, $c\geq \la_d$ and therefore $\nuel=c\,\uno_d\in(\R_{>0}^d)\ua$. In this case 
$$
\tr\la+t=\tr\nuel=d\, c \geq d\, \la_d  \peso{and then} \la_d\leq \frac{\tr\la+t}{d}\ .
$$
\end{enumerate}
The previous remarks show that if $\rho=(e\,\uno_s \coma \la_{s+1} \coma \ldots \coma \la_d)$ 
or $\rho=e\, \uno_d$ for some $e>0$ is such that 
\beq 
\rho \in(\R_{>0}^d)\ua \ \ , \ \ 
\rho\geq \la\py \tr\rho=\tr\la+t \implies \rho=\nuel\ . \EOEP 
\eeq
\end{rem}
\pausa
Next we introduce the notion of a feasible pair. Then, we clarify the relation between this notion and the frame completion problem.

\begin{fed}\label{feasibilidad} \rm
Let $\la=(\la_i)_{i\in\I_d}\in (\R_{\geq 0}^d)\ua$ and let $\ca=(a_i)_{i\in\I_k}\in(\R_{>0}^k)\da$.
\begin{enumerate}
\item Let $r=\min\{k,d\}$, let $\tilde \la=(\la_i)_{i\in\I_r}$ and 
$ t=\sum_{i\in\I_k}a_i>0$. Let $\nu(\la\coma\ca)\in\R_{>0}^d$ be given by:
\beq\label{def nuela}
\nu(\la\coma\ca)=
\begin{cases}  \quad \quad \quad \quad
 \nu( \la\coma t) & \peso{if} \ r=d\le k \\
\big(\, \nu(\tilde\la\coma t) \coma \la_{r+1} \coma \ldots \coma \la_d\, \big) 
& \peso{if}\ r=k<d\ .
\end{cases}  
\eeq
Observe that in the second case  $\nu(\la\coma\ca)$ could be a non ordered vector (if $c(t)>\la_{r+1}$). 
\item Consider the vector $\mu(\la\coma\ca) 
\igdef \nu(\la\coma\ca)-\la \in \R_{\geq 0}^d\,$.
By inspection of Definition \ref{defi nuel} and item 1 above 
we see that $\mu(\la\coma\ca)=\mu(\la\coma\ca)\da$ 
and $\tr\,\mu(\la\coma\ca)=\tr\,\ca = t$.

\item We say that the pair $(\la\coma\ca)$ is {\bf feasible} if $\ca\prec \mu(\la\coma\ca)$ that is, if
\beq\label{desi mayo feas}
\sum_{i\in\I_j}a_i\leq \sum_{i\in\I_j} \mu_i(\la\coma\ca)\peso{for} j\in\I_{r-1}\ , 
\eeq
where the equivalence follows from the properties of $\mu(\la\coma\ca) $ 
given in item 2. Notice that in the case that  $k<d$ then $\mu_{k+1}(\la\coma\ca) =0$.\EOE
\end{enumerate}
\end{fed}

\pausa
We point out that the computation of $\nu(\la\coma\ca)$ and $\mu (\la\coma\ca)$ as in Definition \ref{feasibilidad}, as well as the verification of the inequalities in Eq. \eqref{desi mayo feas} above can be implemented by a fast algorithm.

\pausa
The following result is taken from \cite{mrs3} (see also \cite{mrs1}) and it describes, 
in the feasible case,  the spectral structure of global minimizers of the convex potentials $\text P_\varphi$ in $\cafo$, for 
$\varphi\in\convfs$. As already mentioned, this structure does not depend on $\varphi$. 

\begin{teo}\label{pre teo caso feasible} \rm 
Let $\cF_0=\{f_i\}_{i\in\I_{n_0}}\in(\C^d)^{n_0}$ and let $\ca=(a_i)_{i\in\I_k}\in(\R_{>0}^k)\da$. Let $\la=\la(S_{\cF_0})\ua$ and assume that the pair $(\la\coma\ca)$ is feasible. Let $\nu(\la\coma\ca)=(\nu_i(\la\coma\ca))_{i\in\I_d}\in\R_{\geq 0}^d$ be
as in Definition \ref{feasibilidad}. Then, for every $\varphi\in\convfs$ we have that 
$$ 
\min\{\text P_\varphi(\cF): \ \cF=(\cF_0\coma\cG)\in\cafo\}
=\sum_{i\in\I_d}\varphi(\nu_i(\la\coma\ca)\,)\ .
$$ Moreover, given $\cF=(\cF_0\coma\cG)\in\cafo$ then 
\beq
\text P_\varphi(\cF)=\sum_{i\in\I_d}\varphi(\nu_i(\la\coma\ca))\iff 
 \la(S_{\cF})=\nu(\la\coma\ca)\da\ . \QEDP \eeq
\end{teo}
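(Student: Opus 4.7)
The plan is to reduce the problem to a pure spectral/majorization question and then to resolve it using the water-filling structure encoded in the definition of $\nu(\la\coma\ca)$. The basic reduction uses $S_\cF = S_{\cF_0} + S_\cG$ for any $\cF=(\cF_0\coma\cG)\in\cafo$. For $\cG\in\tcal$, the operator $S_\cG\in\matpos$ has $\tr S_\cG=t\igdef\sum_{i\in\I_k}a_i$ and $\rk S_\cG\le k$, so $\mu\igdef\la(S_\cG)\in(\R_{\geq 0}^d)\da$ is supported in $\I_r$ with $r=\min\{k\coma d\}$; by Proposition \ref{frame mayo}, moreover $\ca\prec\mu$, and conversely any such $\mu$ arises as the spectrum of $S_\cG$ for some $\cG\in\tcal$.

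For the lower bound, Lidskii's inequality (Theorem \ref{mrs284}(1)), applied to $A=S_{\cF_0}$ and $B=S_\cG$ with $\la=\la\ua(S_{\cF_0})$ and $\mu=\la\da(S_\cG)$, gives $\la+\mu\prec\la(S_\cF)$, hence by Theorem \ref{teo intro prelims mayo}(1),
$$
\tr\,\varphi(\la+\mu)\leq\text{P}_\varphi(\cF).
$$
The core step is then the water-filling majorization: for every $\mu\in(\R_{\geq 0}^d)\da$ with $\tr\mu=t$ and support in $\I_r$,
$$
\nu(\la\coma\ca)\prec \la+\mu.
$$
This follows from the characterization of $\nu(\la\coma\ca)$ as the (essentially unique) vector that equals $\max\{c\coma \la_i\}$ on $\I_r$ and $\la_i$ for $i>r$ with trace $\tr\la+t$ recorded in Remark \ref{la pinta del nuel}, i.e.\ as the minimal element in majorization order among vectors $\geq\la$ with that trace and support structure. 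Transitivity of $\prec$ combined with Theorem \ref{teo intro prelims mayo}(1) then yields $\tr\varphi(\nu(\la\coma\ca))\leq\text{P}_\varphi(\cF)$ for every $\cF\in\cafo$. For the existence of a minimizer, feasibility gives $\ca\prec\mu(\la\coma\ca)\da$, so by Proposition \ref{frame mayo} we can construct $\cG^{\rm op}\in\tcal$ realizing $S_{\cG^{\rm op}}=\sum_{i\in\I_d}\mu_i(\la\coma\ca)\,v_i\otimes v_i$, where $\{v_i\}_{i\in\I_d}$ is an orthonormal eigenbasis of $S_{\cF_0}$ ordered so that $S_{\cF_0}v_i=\la_iv_i$ with $\la$ nondecreasing. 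This is precisely the ``antidiagonal'' pairing of Theorem \ref{mrs284}(2); since $\mu(\la\coma\ca)$ is nonincreasing, we obtain $S_{\cF^{\rm op}}=\sum_i\nu_i(\la\coma\ca)\,v_i\otimes v_i$, hence $\la(S_{\cF^{\rm op}})=\nu(\la\coma\ca)\da$ and $\text{P}_\varphi(\cF^{\rm op})=\tr\varphi(\nu(\la\coma\ca))$, so the infimum is attained.

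For the characterization of equality, assume $\varphi\in\convfs$ and $\text{P}_\varphi(\cF)=\tr\varphi(\nu(\la\coma\ca))$. Then both of the chained inequalities above must be equalities at the level of $\tr\varphi$; strict convexity together with Theorem \ref{teo intro prelims mayo}(3) forces first $\la(S_\cF)=(\la+\mu)\da$ and then $(\la+\mu)\da=\nu(\la\coma\ca)\da$, whence $\la(S_\cF)=\nu(\la\coma\ca)\da$. The converse implication is immediate since $\tr\varphi$ depends only on the multiset of entries. The main obstacle I foresee is the water-filling majorization $\nu(\la\coma\ca)\prec\la+\mu$: although intuitive, it requires separating the cases $r=d\leq k$ and $r=k<d$ and, in each, a careful handling of the descending rearrangement of $\la+\mu$ (the monotonicity of that sum depending on $\mu$), and it is this estimate that does the real work in the proof.
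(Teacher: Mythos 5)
First, a point of comparison: the paper itself does not prove Theorem \ref{pre teo caso feasible}; it is quoted from \cite{mrs3} (see also \cite{mrs1}). Your architecture --- Lidskii's inequality for the lower bound $\tr\varphi(\la+\mu)\le \text{P}_\varphi(\cF)$, a ``water-filling'' majorization $\nu(\la\coma\ca)\prec\la+\mu$, attainment via feasibility and Proposition \ref{frame mayo}, and strict convexity (Theorem \ref{teo intro prelims mayo}(3)) for the equality case --- is the right one and is essentially the route of the cited sources. However, there is a genuine gap exactly at the step you yourself flag: the majorization $\nu(\la\coma\ca)\prec\la+\mu$ is asserted, not proved, and the justification you give is not valid. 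Remark \ref{la pinta del nuel} only says that a vector of the special form $(e\,\uno_s\coma\la_{s+1}\coma\ldots\coma\la_d)$ or $e\,\uno_d$ which dominates $\la$ entrywise and has trace $\tr\la+t$ must coincide with $\nu(\la\coma t)$; it is a uniqueness-of-form statement and says nothing about $\nu(\la\coma\ca)$ being minimal in the majorization order among all sums $\la+\mu$. Since an arbitrary $\la+\mu$ is not of that special form, the remark cannot deliver the inequality, and this inequality is where all the work of the theorem lies.

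The good news is that the claim is true and can be closed with a short argument you could insert. With $c=c(t)$ as in Eq. \eqref{el c}, use the standard characterization: $x\prec y$ iff $\tr x=\tr y$ and $\sum_{i\in\I_d}(x_i-s)^+\le\sum_{i\in\I_d}(y_i-s)^+$ for every $s\in\R$. If $s\ge c$ then $\sum_i(\max\{c\coma\la_i\}-s)^+=\sum_i(\la_i-s)^+\le\sum_i(\la_i+\mu_i-s)^+$ because $\mu\ge 0$; if $s<c$ then $\sum_i(\max\{c\coma\la_i\}-s)^+=\tr\la+t-ds\le\sum_i(\la_i+\mu_i-s)\le\sum_i(\la_i+\mu_i-s)^+$, and the traces agree by construction of $c$. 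This settles the case $r=d\le k$ (note it needs no ordering or support hypotheses on $\mu$ at all). For $r=k<d$, use that $\rk S_\cG\le k$ forces $\mu_i=0$ for $i>k$, so $\la+\mu$ and $\nu(\la\coma\ca)$ share the common tail $(\la_{k+1}\coma\ldots\coma\la_d)$; apply the previous case in dimension $k$ to $\tilde\la$ and $(\mu_i)_{i\in\I_k}$ and use that majorization is preserved when the same block is appended to both vectors. With this lemma in place, the remaining steps of your proposal (Lidskii, Theorem \ref{teo intro prelims mayo}(1) and (3), and Proposition \ref{frame mayo} applied to $\mu(\la\coma\ca)$, which is exactly where feasibility enters) are correct; in the equality case you can even shortcut your two-step argument by applying Theorem \ref{teo intro prelims mayo}(3) directly to $\nu(\la\coma\ca)\prec\la(S_\cF)$ obtained by transitivity.
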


\section{Local minima of frame completions with prescribed norms}\label{sec 3 tutti}

We begin with a brief description of our main problem (for a detailed description see Section \ref{sec geo loc min}). 
Let $\cF_0=\{f_i\}_{i\in\I_{n_0}}$ be a fixed family in 
$(\C^d)^{n_0}$ and $\ca=(a_i)_{i\in\I_k}\in(\R_{>0}^k)\da$; consider $\T_{\C^d}(\ca)$ (see Notations \ref{rem prob com con norm predet}) endowed with the $d(\cG\coma \tilde\cG)=\max\, \{ \, \| g_i-\tilde g_i\|:\ i\in\I_k\, \}$ for
$\cG=\{g_i\}_{i\in\I_k},\, \tilde \cG=\{\tilde g_i\}_{i\in\I_k}\in \T_{\C^d}(\ca)$. 
Our main goal is to study the structure of local minimizers of the map
\beq\label{eq defi intro sec3}
 \T_{\C^d}(\ca)\ni \cG \mapsto \text{P}_\varphi(\cF_0\coma\cG)=\tr(\varphi(S_{(\cF_0\coma \cG)}))=\tr(\varphi(S_{\cF_0}+S_\cG))\,,
\eeq
where $\varphi\in\convfs$ is a strictly convex function and $\cF=(\cF_0\coma\cG)\in\cafo$ is a completion of $\cF_0$ with a sequence of vectors in $\C^d$ with norms prescribed by the sequence $\ca$. 

\pausa
In this section we describe the first structural features of local minimizers of the map in Eq. \eqref{eq defi intro sec3}, for  
general strictly convex potentials. These results are applied in the next section to prove that local minima are also global minima. 

\subsection{On a local Lisdkii's theorem}\label{sec loc lid}

\pausa
The result in this subsection lies in the context of matrix analysis, and it is of independent interest. It will be systematically used in the rest of the paper. 
Since its proof is rather technical, we shall present it in the Appendix (see Section  \ref{appendicity}). In order to put our result in perspective, we consider the following
\begin{rem}[Lidskii's (global) inequality]\label{rem Lidskii glob}

Fix  $S\in\matpos$ and $\mu\in(\R_{\geq 0}^d)\da$. Consider the unitary orbit 
\beq\label{defi omu}
\cO_\mu=\{G\in\matpos:\la(G)=\mu\}=\{U^* D_{\mu}\,U:\ U\in\matud\} 
\eeq
where $D_\mu\in\matpos$ denotes the diagonal matrix with main diagonal $\mu$.  

\pausa
Given $\varphi\in\convfs$ we define the map 
$\Phi=\Phi_{S,\,\varphi}:\cO_\mu\rightarrow \R_{\geq 0}$ given by 
\beq\label{defi Phi}
 \Phi(G)=\tr(\varphi(S+G))=\sum_{j\in\I_d}\varphi(\la_j(S+G))\peso{for} G\in\cO_\mu\,.
\eeq
(Notice that Eq. \eqref{eq defi intro sec3} motivates the consideration of the map $\Phi$ as defined above). 
Let $(\la_i)_{i\in\I_d}=\la(S)\ua\in(\R_{\geq 0}^d)\ua$. Then, Lidskii's 
(additive) inequality together with the characterization of the 
case of equality given in Theorem \ref{mrs284} imply that 
$$
 \min_{G\in\cO_\mu} \Phi(G)=\sum_{i\in\I_{d}}\varphi(\la_i+\mu_i) 
$$
and, if $G_0\in  \cO_\mu$ then $G_0$ is a global minimum of $\Phi$ on $\cO_\mu$ if and only if 
there exists $\{v_i\}_{i\in\I_d}$ an ONB of $\C^d$ such that 
$$
S=\sum_{i\in\I_d}\la_i\ v_i\otimes v_i \py G_0=\sum_{i\in\I_d}\mu_i\ v_i\otimes v_i\,.
$$
Indeed, Lidskii's inequality states that $\la(S)\ua+\la(G)\prec\la(S+G)$ for every $G\in\cO_\mu$; since $\varp\in\convfs$, the previous majorization relation implies that 
$$ \Phi(G)=\tr(\varphi(S+G))\geq \sum_{i\in\I_{d}}\varphi(\la_i+\mu_i)\,.$$ If we assume 
that $G_0\in\cO_\mu$ is a global minimum then (see Theorem \ref{teo intro prelims mayo}), $$ \la(S)\ua+\la(G_0)\prec\la(S+G_0) \py \tr(\varphi(\la(S)\ua+\la(G_0)))=\tr(\varphi(\la(S+G_0)))$$
$$\implies \la(S+G_0)=(\la(S)\ua+\la(G_0))\da\,,$$
so equality holds in Lidskii's inequality. Hence we can apply Theorem \ref{mrs284}. 
Notice that in particular, $S$ and $G_0$ commute.\EOE
\end{rem}

\pausa
Let $\mu\in(\R_{\geq 0}^d)\da$ and consider the unitary orbit $\cO_\mu$ from Eq.
\eqref{defi omu}. In what follows we consider $\cO_\mu$ endowed with the metric induced by the operator norm. 
The next result states that given $\varp \in \convfs$ then the {\bf local} minimizers of the map
$\Phi=\Phi_{S,\,\varphi}:\cO_\mu\rightarrow \R_{\geq 0}$ given by 
Eq. \eqref{defi Phi} - in the metric space $\cO_\mu$ - are also global minimizers.

\begin{teo}[Local Lidskii's theorem]\label{teo LLT}
Let $S\in\matpos$ and $\mu=(\mu_i)_{i\in\I_d}\in(\R_{\geq 0}^d)\da$. Assume that 
$\varp \in \convfs$ and that $G_0\in\cO_\mu$ is a local minimizer of $\Phi=\Phi_{S\coma \varp}$ on $\cO_\mu\,$. 
Then, there exists $\{v_i\}_{i\in\I_d}$ an ONB of $\C^d$ such that, 
if we let $(\la_i)_{i\in\I_d}=\la\ua(S)\in (\R_{\geq 0}^d)\ua$ then 
\beq\label{eq base buena}
 S=\sum_{i\in\I_d}\la_i\ v_i\otimes v_i \py G_0=\sum_{i\in\I_d}\mu_i\ v_i\otimes v_i\ .
\eeq
 In particular, $\la(S+G_0)=(\la(S)\ua+\la(G_0)\da)\da$ so $G_0$ is also a global minimizer of $\Phi$ on $\cO_\mu\,$.
\end{teo}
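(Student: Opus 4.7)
The plan is to proceed by contraposition: assume $G_0\in\cO_\mu$ is a local minimizer of $\Phi$, yet the spectral identity
\[
\la(S+G_0)\;=\;\bigl(\la\ua(S)+\la\da(G_0)\bigr)\da
\]
fails, and derive a contradiction by exhibiting $G\in\cO_\mu$ arbitrarily close to $G_0$ with $\Phi(G)<\Phi(G_0)$.

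The reduction is justified as follows. By the global Lidskii inequality (Theorem \ref{mrs284}(1)) one always has $\la\ua(S)+\la\da(G_0)\prec\la(S+G_0)$, so if the spectral identity above holds, then Theorem \ref{mrs284}(2) provides a common ONB $\{v_i\}_{i\in\I_d}$ realizing Eq.\ \eqref{eq base buena}; and then for every $G\in\cO_\mu$, global Lidskii combined with Theorem \ref{teo intro prelims mayo}(1) applied to the convex $\varphi$ yields $\tr\,\varphi(S+G)\geq \sum_{i}\varphi(\la_i+\mu_i)=\Phi(G_0)$, making $G_0$ a global minimizer. Hence it suffices to prove that local minimality forces the spectral identity.

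For the main argument, assume the identity fails. Diagonalize $G_0=\sum_{i\in\I_d}\mu'_i\,w_i\otimes w_i$ in an ONB of its eigenvectors, where $\mu'$ is some permutation of $\mu$. The failure of the spectral identity, again via Theorem \ref{mrs284}(2), translates into the existence of a pair of indices $i\ne j$ with $\mu'_i\ne\mu'_j$ such that the associated $2\times 2$ slot is not in optimal configuration: either $\langle Sw_i,w_j\rangle\ne 0$ (the \emph{non-commuting case}), or $S$ is diagonal on $\sgen{w_i,w_j}$ but its two eigenvalues there are co-sorted with $(\mu'_i,\mu'_j)$ rather than antisorted (the \emph{wrong-pairing case}). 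For such a pair I would consider the one-parameter family $U_\theta\in\matud$ acting as a planar rotation in $\sgen{w_i,w_j}$ and as identity on the orthogonal complement, and set $G(\theta)=U_\theta^*\,G_0\,U_\theta\in\cO_\mu$, which satisfies $\|G(\theta)-G_0\|=O(\theta)$ as $\theta\to 0$.

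The heart of the proof is to show $\Phi(G(\theta))<\Phi(G_0)$ for arbitrarily small $\theta>0$, contradicting local minimality. Since $G(\theta)-G_0$ has rank at most $2$ and is supported on $\sgen{w_i,w_j}$, a direct $2\times 2$ eigenvalue expansion (in the spirit of the computation sketched in Remark \ref{rem Lidskii glob}) reveals the following: in the non-commuting case, a first-order drift of two eigenvalues of $S+G(\theta)$ produces a first-order strict decrease in $\Phi$; in the wrong-pairing case, the two affected eigenvalues move toward each other at order $\theta^2$, yielding an $O(\theta^2)$ decrease in $\tr\,\varphi$ via the strict convexity of $\varphi$. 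The principal obstacle, which presumably necessitates relegating the detailed argument to the appendix, is that $S$ generically has nonzero entries coupling $\sgen{w_i,w_j}$ to its orthogonal complement, so $S+G(\theta)$ is not block-diagonal; the remaining $d-2$ eigenvalues shift as well (by $O(\theta^2)$ via second-order perturbation theory), and one must carefully separate the dominant internal gain from the external coupling contribution to confirm a net strict decrease. A secondary technical hurdle is that $\varphi\in\convfs$ need not be differentiable, so Taylor expansion of $\Phi$ is not directly available; one instead applies strict convexity via pairwise inequalities on eigenvalue pairs, or approximates $\varphi$ uniformly by smooth strictly convex functions.
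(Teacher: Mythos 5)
Your reduction is fine and agrees with the paper: by Theorem \ref{mrs284} and Theorem \ref{teo intro prelims mayo} it suffices to show that local minimality forces $\la(S+G_0)=(\la\ua(S)+\la\da(G_0))\da$, and your \emph{wrong-pairing} analysis is essentially the paper's appendix computation. But there is a genuine gap at the heart of the argument, in the \emph{non-commuting} case. You assert that a planar rotation in $\sgen{w_i,w_j}$ produces ``a first-order strict decrease in $\Phi$,'' but nothing in your sketch establishes this. To get a strictly negative first derivative of $\theta\mapsto\tr\,\varphi(S+U_\theta^*G_0U_\theta)$ one needs (i) differentiability of $\varphi$, which is not assumed for $\varphi\in\convfs$, and (ii) the nontrivial fact that if \emph{all} such directional derivatives vanish then $[\varphi'(S+G_0),G_0]=0$, hence $[S,G_0]=0$ by strict monotonicity of $\varphi'$ -- i.e.\ the very statement you are trying to prove is what guarantees that some slot gives a nonzero first-order term. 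Neither point appears in your outline, and your fallback of approximating $\varphi$ by smooth strictly convex functions does not repair it: a local minimizer of $\Phi_\varphi$ need not be (nor be near) a local minimizer of the approximated potential, so the contradiction does not transfer. Moreover, in your own wrong-pairing case the subspace $\sgen{w_i,w_j}$ need not be $S$-invariant (you only assume the $2\times 2$ block of $S$ is diagonal), so the ``external coupling'' you mention enters at the same order $\theta^2$ as the internal gain; ``carefully separate the dominant internal gain from the coupling'' is precisely the unproved step, not a routine technicality.

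The paper avoids both difficulties by a different mechanism. First (Lemma \ref{lem equiv de los probs} and Proposition \ref{pro al final commutan nomas}) it proves that local minimality forces $[S,G_0]=0$: if not, one takes a minimal projection $P$ of the commutant $\{S,G_0\}'$ with $[PS,PG_0]\neq 0$, so that on $R(P)$ Lidskii's inequality is strict, and then uses that $\Gamma(U,V)=U^*SU+V^*G_0V$ is a submersion at $(I,I)$ (Lemma \ref{lema gama sobre}, together with openness of $U\mapsto U^*G_0U$ onto $\cO_\mu$) to realize, with $(U,V)$ arbitrarily close to $(I,I)$, any trace-preserving spectrum close to and strictly majorized by $\la\big((S+G_0)|_{R(P)}\big)$; strict convexity of $\varphi$ then gives a strictly smaller value, with no eigenvalue perturbation expansion and no differentiability of $\varphi$. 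Only after commutation is established does the paper perform the $2\times 2$ rotation, in a \emph{common} eigenbasis of $S$ and $G_0$, where the block decouples exactly and the strict majorization $\la(M(t))\prec\la(M(0))$ is computed in closed form. Your proposal has no substitute for the commutation step, so as written it does not constitute a proof; to complete it along your lines you would have to supply either a nonsmooth first-order criticality argument (via one-sided directional derivatives of $\varphi$) showing that criticality of $G_0$ already implies $[S,G_0]=0$, or an argument of the paper's global type.
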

\proof See Theorem \ref{teo LLTApp} in the Appendix. \QED

\subsection{Geometrical properties of local minima}\label{sec geo loc min}
In the following two sections we study the relative geometry (of both the frame vectors and the eigenvectors of their frame operator) of frame completions with prescribed norms that are local minima of strictly convex potentials. 
We begin by introducing the basic notations 
used throughout these sections. 

\begin{notas}\label{not gen min loc} \ 

\begin{enumerate}
\item Let $\cF_0=\{f_i\}_{i\in\I_{n_0}}$ be a fixed family in 
$(\C^d)^{n_0}$ and $\ca=(a_i)_{i\in\I_k}\in(\R_{>0}^k)\da$;  
\item Given a subspace $V\subseteq\C^d$ we denote by 
$$\T_{V}(\ca)
= \llav{\cG=\{g_i\}_{i\in\I_k}\in V^k:\ \|g_i\|^2=a_i\, , \ i\in\I_k \ }$$ 
endowed with the product topology - of the usual topology
in $\T_{V}(a_i)$ for $i\in\I_k$ - i.e. induced by the metric
$$
d(\cG\coma \tilde\cG)=\max\, \{ \, \| g_i-\tilde g_i\|:\ i\in\I_k\, \}\,. $$  
\item Given $\varphi\in\convfs$, let 
$\Psi_\varphi = \Psi_{\varphi\coma \cF_0}:\tcal \rightarrow [0,\infty)$ be given by 
\beq \label{psifi}
\Psi_\varphi(\cG)=\rm P_\varphi (\cF_0 \coma \cG)=\tr\, \big(\,\varphi(S_{\cF_0}+S_\cG)\, \big)
 \peso{for every} \cG \in \tcal  ,
\eeq
the convex potential induced by $\varphi$ of the completed 
sequence $\cF=(\cF_0\coma\cG)\in\cafo$.
\item We shall fix $\cG_0=\{g_i\}_{i\in\I_k}\in \tcal $ which is 
a {\bf local minimum} of $\Psi_\varphi$ in $\tcal $. \EOE
\end{enumerate}
\end{notas}

\pausa
In the following sections we shall see  that 
$\cG_0\,$ is actually a global minimum
in $\tcal $ for $\Psi_\vfi\,$ or, in other words, that $\cF=(\cF_0\coma \cG_0)$ is a global minimum in $\cafo$ for the convex potential $\text{P}_\vfi\,$. 

\pausa 
The following result, which is based on the Local Lidskii's Theorem \ref{teo LLT}, depicts the first structural properties of local minimizers of strictly convex potentials. 
\begin{teo}\label{teo estruc 1} Consider the Notations \ref{not gen min loc}. 
 Then,
\begin{enumerate}
\item For every $j\in\I_k$, $S_\cF=S_{\cF_0}+S_{\cG_0}$  commutes with $g_j\otimes g_j$  or equivalently,
$g_j$ is an eigenvector of $S_\cF=S_{\cF_0}+S_{\cG_0}$. 

\item There exists $\{v_i\}_{i\in\I_d}$ an ONB of $\C^d$ such that 
$$
S_{\cF_0}=\sum_{i\in\I_d} \la\ua_i(S_{\cF_0}) \ v_i\otimes v_i
\py  S_{\cG_0}=\sum_{i\in\I_d} \la\da_i(S_{\cG_0}) \ v_i\otimes v_i \ 
\ .
$$ 
In particular, we have that $\la(S_{\cF_0}+S_{\cG_0})=\big[\la\ua(S_{\cF_0})+\la(S_{\cG_0})\,\big]\da$.
\end{enumerate}
\end{teo}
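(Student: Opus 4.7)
The plan is to derive both conclusions from the Local Lidskii Theorem \ref{teo LLT} by restricting the hypothesis that $\cG_0$ is a local minimum of $\Psi_\varphi$ to two well-chosen families of perturbations. Each family will lie in a unitary orbit of positive semidefinite matrices on which Theorem \ref{teo LLT} applies directly; in each case the only non-trivial point is to translate ``local minimum in $\tcal$'' into ``local minimum on the orbit''.

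For Part 2, I would use global unitary rotations: for each $U\in\matud$ the sequence $U\cdot\cG_0:=\{Ug_i\}_{i\in\I_k}$ lies in $\tcal$ and satisfies
\[
d(\cG_0,U\cdot\cG_0)\leq \|I-U\|\,\max_{i\in\I_k}\|g_i\|\,,
\]
so for $U$ close to $I$ one has $\Psi_\varphi(U\cdot\cG_0)\geq \Psi_\varphi(\cG_0)$. Since $S_{U\cdot\cG_0}=U\,S_{\cG_0}\,U^*$, this exactly says that $S_{\cG_0}$ is a local minimum on its unitary orbit $\cO_\mu$ (with $\mu=\lambda(S_{\cG_0})$) of the map $G\mapsto \tr(\varphi(S_{\cF_0}+G))$. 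Theorem \ref{teo LLT} applied with $S=S_{\cF_0}$ then provides the ONB $\{v_i\}_{i\in\I_d}$ as in Eq. \eqref{eq base buena}; the ``in particular'' statement follows because in this basis $S_{\cF_0}+S_{\cG_0}$ is diagonal with entries $\lambda_i^\uparrow(S_{\cF_0})+\lambda_i^\downarrow(S_{\cG_0})$, which we simply reorder in non-increasing order.

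For Part 1, I would fix $j\in\I_k$, let $R_j:=S_{\cF_0}+\sum_{i\neq j}g_i\otimes g_i\in\matpos$, and vary only the $j$-th vector of $\cG_0$; thus $g_j$ is a local minimum of the map $v\mapsto \tr(\varphi(R_j+v\otimes v))$ on the sphere $\T_{\C^d}(a_j)$. To pass from here to the rank-one unitary orbit $\cO_j:=\{v\otimes v:\|v\|^2=a_j\}$, the key point is that the continuous surjection $v\mapsto v\otimes v$ is open at $g_j$ modulo the $\T$-phase. Indeed, given $w\in\T_{\C^d}(a_j)$ with $w\otimes w$ close to $g_j\otimes g_j$, the elementary identity
\[
\tr\big((w\otimes w-g_j\otimes g_j)^2\big)=2a_j^2-2|\langle w,g_j\rangle|^2
\]
forces $|\langle w,g_j\rangle|\to a_j$; so picking $\alpha\in\T$ with $\alpha\langle w,g_j\rangle=|\langle w,g_j\rangle|$ yields $\|\alpha w-g_j\|^2=2(a_j-|\langle w,g_j\rangle|)\to 0$ while $(\alpha w)\otimes(\alpha w)=w\otimes w$. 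Consequently $g_j\otimes g_j$ is a local minimum of $M\mapsto \tr(\varphi(R_j+M))$ on $\cO_j$, and Theorem \ref{teo LLT} applied with $S=R_j$ produces an ONB simultaneously diagonalizing $R_j$ and $g_j\otimes g_j$. In particular $g_j$ is an eigenvector of $R_j$, hence also of $S_\cF=R_j+g_j\otimes g_j$, which is the content of Part 1.

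The main obstacle in this plan is precisely the phase-adjustment openness argument in Part 1: one must genuinely justify that every rank-one configuration in $\cO_j$ near $g_j\otimes g_j$ is realized by some $v\in\T_{\C^d}(a_j)$ near $g_j$, so that the local-minimum hypothesis on $\tcal$ legitimately transfers to the orbit setting where Theorem \ref{teo LLT} is available. Once this observation is in place, both parts of the theorem are direct consequences of the Local Lidskii Theorem, and no additional majorization or critical-point computation is required.
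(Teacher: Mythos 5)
Your overall route is the same as the paper's: both items are obtained by transferring the local minimality of $\cG_0$ in $\tcal$ to local minimality of a suitable positive matrix on a unitary orbit, and then invoking the Local Lidskii Theorem \ref{teo LLT}. Your treatment of item 1 is correct and in fact more explicit than the paper's (which simply says the comparison of $\Psi_\varphi$ with $\Phi_{S_j\coma\varphi}$ is clear): your phase--adjustment computation showing that any $w\otimes w\in\cO_{\mu_{[j]}}$ close to $g_j\otimes g_j$ comes from some $\alpha w$ close to $g_j$ is exactly the openness statement needed there, and your deduction that $g_j$ is an eigenvector of $R_j$, hence of $S_\cF=R_j+g_j\otimes g_j$, is fine.

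There is, however, a gap in your item 2, and it is precisely the subtlety you were alert to in item 1 but passed over here. From $d(\cG_0,U\cdot\cG_0)\le\|I-U\|\max_i\|g_i\|$ you obtain that $\Phi_{S_{\cF_0}\coma\varphi}(U\,S_{\cG_0}\,U^*)\ge\Phi_{S_{\cF_0}\coma\varphi}(S_{\cG_0})$ for all $U\in\matud$ with $\|I-U\|$ small; but this does \emph{not} ``exactly say'' that $S_{\cG_0}$ is a local minimum of $\Phi_{S_{\cF_0}\coma\varphi}$ on the metric space $\cO_\mu$, which is the hypothesis Theorem \ref{teo LLT} requires. One still has to know that every $G\in\cO_\mu$ with $\|G-S_{\cG_0}\|$ small can be written as $U\,S_{\cG_0}\,U^*$ with $U$ close to $I$, i.e.\ that the orbit map $\pi:\matud\to\cO_\mu$, $\pi(U)=U\,S_{\cG_0}\,U^*$, is open at $I$. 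This is true but not elementary (unlike the rank-one case, where your trace identity settles it), especially when $\mu$ has repeated or clustered entries; the paper supplies it by citing \cite[Thm.\ 4.1]{AS} (see also \cite{DF}), so that $\pi(B_{(I\coma\eps)})$ is a neighborhood of $S_{\cG_0}$ in $\cO_\mu$. Once you add this openness step (by citation or by constructing, for $G$ near $S_{\cG_0}$, an intertwining unitary near $I$), your argument for item 2 is complete and coincides with the paper's.
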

 
\begin{proof}
For 
each $j\in\I_k\,$, let $S_j=S_{\cF_0}+\sum_{i\in\I_n\setminus\{j\}}g_i\otimes g_i\in\matpos$
and $\mu_{[j]}= a_j \, e_1\in\R^d_{\geq 0}\,$. Notice that, as in Eq.\eqref{defi omu},  the orbit 
$\cO_{\mu_{[j]}} = \{g\otimes g : \|g\|^2 = a_j\}$. By hypothesis, it is clear 
(comparing the maps $\Psi_\varphi$ and $\Phi_{S_j \coma \varphi}$) that the matrix $G_j=g_j\otimes g_j$ is a local minimum 
for the map $\Phi_{S_j \coma \varphi}$ on $\cO_{\mu_{[j]}}\,$. Using Theorem \ref{teo LLT}, we conclude that $S_j$ and $G_j$ commute, which 
implies item 1. 

\pausa
By hypothesis, there exists $\varepsilon>0$ such that every $U \in B_{(I\coma \eps)}\igdef  
\{U\in\cU(d): \|I-U\|<\epsilon\}$ satisfies that    $U\cdot\cG_0=\{U\,g_i\}_{i\in\I_k}\in\tcal $, 
$S_{U\cdot\cG_0}=U\,S_{\cG_0}\,U^*$ and  that $U\cdot\cG_0$ 
is close enough to $\cG_0$ so that   
$$ 
\Phi_{S_{\cF_0}\coma \varphi}(U\,S_{\cG_0}\,U^*)=\tr\, \big(\,\varphi(S_{\cF_0}+U\,S_{\cG_0}\,U^*)\, \big)=\Psi_\varphi (U\cdot\cG_0)\geq \Psi_\varphi (\cG_0) =\Phi_{S_{\cF_0}\coma \varphi}(S_{\cG_0})\ . 
$$ 
Let $\mu = \la (S_{\cG_0})\in (\R_{\geq 0}^d)\da$. 
Notice that  the map 	$\pi : \matud \to \cO_\mu$ given by $\pi(U) = U\,S_{\cG_0}\,U^*$ is open 
(see \cite[Thm 4.1]{AS}), so that $\pi (B_{(I\coma \eps)})$ is an open 
neighborhood of $S_{\cG_0}$ in $\cO_\mu\,$, and 
$S_{\cG_0}$ is a local minimum for the map $\Phi_{S_{\cF_0}  \coma \varphi}$ on $\cO_{\mu}\,$. 
Item 2 now follows from
Theorem \ref{teo LLT}.
\end{proof}

\def\nug0{\nu(\cG_0)}

\begin{notas}\label{notas 2} Consider 
the Notations \ref{not gen min loc}. 
Then, Theorem \ref{teo estruc 1} allows us 
to introduce the following notions and notations:
\begin{enumerate}
\item We denote by  $\la=(\la_i)_{i\in\I_d} = \la\ua_i(S_{\cF_0}) \in (\R_{\geq 0}^d)\ua$  and 
$\mu=(\mu_i)_{i\in\I_d} = \la\da_i(S_{\cG_0}) \in (\R_{\geq 0}^d)\da$.
\item We fix $\cB=\{v_i\}_{i\in\I_d}$ an ONB of $\C^d$ as in Theorem \ref{teo estruc 1}. Hence, 
\beq\label{la y mu}
S_{\cF_0}=\sum_{i\in\I_d} \la_i \ v_i\otimes v_i
\py S_{\cG_0}=\sum_{i\in\I_d} \mu_i \ v_i\otimes v_i  \ ,
\eeq 
\item We denote by $  \nug0 = \la + \mu \in \R_{\geq 0}^d$ 
so that $S_{\cF}=\sum_{i\in\I_d} \nu_i(\cG_0) \ v_i\otimes v_i\,$. Notice that $\nug0$ is constructed by pairing the entries 
of ordered vectors (since $\la=\la\ua$ and $\mu=\mu\da$) but  $\nug0$ is not necessarily an ordered vector. Nevertheless, we have that $\la(S_\cF) = \nug0\da$. 
In what follows we obtain some properties of (the unordered vector) $\nug0$ 
\item Let $ s_\cF= \max \, \{i\in\I_d
: \mu_i \neq 0\} = \rk \, S_{\cG_0}\,$. Denote by  $W= R(S_{\cG_0})$, which reduces $S_\cF\,$.  
\item Let $S = S_\cF\big|_W \in L(W)\,$ and $\sigma(S) = \{ c_1 \coma \dots \coma c_p\}$
(where $c_1 > c_2 > \dots > c_p>0$). 
\item For each $j\in \I_p\,$, we consider the following sets of indexes: 
$$K_j = \{ i \in \I_{s_\cF} :   \nu_i(\cG_0)=\la_i +\mu_i = c_j\}  
\py J_j = \{i\in \I_k: S\,g_i = c_j \, g_i\}  \ .
$$ 
Theorem \ref{teo estruc 1} assures that   
$\I_{s_\cF}  
= \bigsqcup\limits_{j\in \I_p} 
\, K_j \py \I_k  
= \bigsqcup\limits_{j\in \I_p} 
\, J_j  \ . 
$
\item Since $R(S_{\cG_0})= \gen\{g_i : i \in \I_k\} = W = 
\bigoplus_{i\in\I_p} \ker \,(S-c_i\,I_W\,)$
then, for every $ j\in \I_p\ $, 
\beq\label{cajas}
W_j \igdef \gen\{g_i : i \in J_j\} = \ker \,(S-c_j\,I_W\,) = \gen\{v_i : i \in K_j\} \ ,
\eeq
because $g_i \in  \ker \,(S-c_j\,I_W\,)$ for every $i \in J_j\,$. Note that, by Theorem \ref{teo estruc 1}, each $W_j$ reduces both $S_{\cF_0}$ and $S_{\cG_0}\,$. \EOE
\end{enumerate}
\end{notas}

\pausa
The next remark allow us to consider reduction arguments when computing different aspects of the structure of local minima
of the completion problem with prescribed norms.

\begin{rem}[Two reduction arguments for local minima] \label{induc}
Consider the data, assumptions  and terminology fixed 
in the Notations \ref{not gen min loc} and \ref{notas 2}. 

\pausa
$a$) For any $j \leq p-1$ denote by 
$$
I_j = \I_d 
\setminus \bigcup_{i \le j} \,K_i  \ \coma  \ L_j = 
\I_k 
\setminus \bigcup_{i \le j} \,J_i  \ \coma \ 
\la^{I_j} = (\la_i)_{i \in I_j}  \ \coma  \ 
\cG_0^{(j)} = \{g_i\}_{i \in L_j}  \coma \ca^{L_j}= (a_i )_{i \in L_j} 
$$
and take some sequence $\cF_0^{(j)} $ in 
$\cH_j = \big[\, \bigoplus_{i\le j} W_i \big]\orto$
 such that $S_{\cF_0^{(j)} } = S_{\cF_0}|_{\cH_j}$ (notice that, by construction, $\cH_j$ reduces $S_{\cF_0}$). 
Then, it is straightforward to show that 
$\cG_0^{(j)}$ is a local minimizer of 
$\Psi^j_{\varphi \coma \cF_0^{(j)}}:\T_{\cH_j}(\ca^{L_j})\rightarrow \R_{\geq 0}\,$. 
Indeed, if $\cM_j$ is any sequence of $|L_j|$ vectors in $\cH_j$ with norms prescribed by $\ca^{L_j}$ then 
$\cM=(\{g_i\}_{i\in\I_k\setminus L_j} \coma \cM_j)\in \tcal $  (in some order) and 
$$
\Psi_\varphi(\cF_0\coma \cM)=\sum_{i=1}^j \varphi(c_i)\ \dim W_i + \Psi^j_\varphi(\cM_j)\ ,
$$
where the last equation is a consequence of the orthogonality relations between the families 
$\{g_i\}_{i\in\I_n\setminus L_j}$ and $\cM_j\,$. 
Also notice that the distance between $\cM_j$ and $\cG_0^{(j)}$ is the same as the distance between 
$\cM$ and $\cG_0\,$.

\pausa
The importance of the previous remark lies in the fact that 
it provides a reduction method to compute the structure 
of the sets $\cG_0^{(i)} \coma K_i $ and $J_i$ for $1\leq i\leq p$, as well as 
the set of constants $c_1>\ldots>c_p\geq 0$. Indeed, assume that we are 
able to describe the sets $\cG_0^{(1)}\coma K_1\coma J_1$ and the constant 
$c_1$ in some structural sense, using the fact that these sets are 
extremal (e.g. these sets are built on $c_1>c_j$ for $2\leq j\leq p$). Then, we can the apply the same argument to compute, for example, 
the sets $\cG_0^{(2)}\coma K_2\coma J_2$ using that these are extremal for the reduced problem described above for $j=1$. 

\pausa
$b$)  Assume that $k\in\I_{d-1}$ and let $\cF=(\cF_0,\cG_0)\in\cafo$. 
Fix a sequence   $\tilde \cF_0=\{\tilde f_i\}_{i\in\I_{n_0}}$ 
 in $W=R(S_{\cG_0})$ such that 
$S_{\tilde \cF_0}=S_{\cF_0}|_W\, $. Then, 
for every $\cM \in \T_W(\ca)$, 
$$ \Psi_{\varphi\coma  \cF_0} (\cM)=\text P_\varphi(\cF_0\coma \cM)
=\text{P}_\varphi(\tilde \cF_0\coma \cM)
+\sum_{i=k+1}^d \varphi(\la_i) = \Psi_{\varphi\coma \tilde \cF_0} (\cM)
+\sum_{i=k+1}^d \varphi(\la_i)\ , 
$$
even for $\cM= \cG_0\,$. 
The identity above shows that $\cG_0$ is a local minimizer of $\Psi_{\varphi\coma \tilde \cF_0}$ in 
$\T_W(\ca)$. 
In this setting we have that  $d\,'=\dim W=\rk(S_{\cG_0})\leq k$. So that in order to compute the structure of $\cG_0$ we can assume, as we sometimes do, that $k\geq d$. 
 \EOE
\end{rem}

\subsection{Inner structure of local minima}\label{estruc interior}

Throughout this section we consider the Notations \ref{not gen min loc} and \ref{notas 2}. Recall  that we have fixed 
$\varphi\in\LLM$ and a sequence 
$\cG_0=\{g_i\}_{i\in\I_k}\in \tcal $ which is 
a {local minimum} of the potential $\Psi_{\varphi\coma \cF_0}$ in $\tcal $.
The following result is inspired on some ideas from \cite {Phys}.
	
\begin{pro}\label{teo gen 4.6}
Let $\cF=(\cF_0\coma \cG_0)\in\cafo$ 
be as in Notations \ref{notas 2} and assume that there exist 
$j\in\I_ p$ and $c\in\sigma(S_\cF)$ such that 
$c<c_j\,$. Then, the family $\{g_i\}_{i\in J_j}$ is linearly independent.
\end{pro}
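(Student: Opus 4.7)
I argue by contradiction. Suppose $\{g_i\}_{i\in J_j}$ is linearly dependent; then there exist scalars $(\beta_i)_{i\in J_j}\subset\C$, not all zero, with $\sum_{i\in J_j}\beta_i g_i=0$. By Theorem \ref{teo estruc 1}, the eigenspaces of $S_\cF$ are pairwise orthogonal, so any unit eigenvector $u$ of $S_\cF$ for the eigenvalue $c$ satisfies $u\perp W_j$; in particular $\langle g_i,u\rangle=0$ for every $i\in J_j$.

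The plan is to build a norm-preserving curve $\cG_0(t)=\{g_i(t)\}_{i\in\I_k}\in\tcal$ through $\cG_0$ that transports mass from $W_j$ (eigenvalue $c_j$) to $\gen\{u\}$ (eigenvalue $c<c_j$), and then invoke strict convexity of $\varphi$ to contradict local minimality. Concretely, for each $i\in J_j$ with $\beta_i\neq 0$ set
$$
g_i(t)=\cos\!\Big(\tfrac{|\beta_i|\,t}{\sqrt{a_i}}\Big)\,g_i+\sin\!\Big(\tfrac{|\beta_i|\,t}{\sqrt{a_i}}\Big)\,\tfrac{\bar\beta_i\sqrt{a_i}}{|\beta_i|}\,u,
$$
and $g_i(t)=g_i$ otherwise. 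Since $g_i\perp u$, one checks $\|g_i(t)\|^2=a_i$ for every $t$, so $\cG_0(t)\in\tcal$, with $d(\cG_0(t),\cG_0)\to 0$ and $g_i(t)=g_i+\bar\beta_i\,t\,u+O(t^2)$. Writing $S(t):=S_{\cF_0}+S_{\cG_0(t)}$ and expanding, the sesquilinearity of $x\otimes y$ (linear in $x$, conjugate-linear in $y$) gives
$$
S(t)=S_\cF+t\Bigl[\bigl(\!\sum_{i\in J_j}\beta_i g_i\bigr)\otimes u+u\otimes\bigl(\!\sum_{i\in J_j}\beta_i g_i\bigr)\Bigr]+t^2\Delta+O(t^3),
$$
where the linear-in-$t$ term \emph{vanishes} by $\sum_{i\in J_j}\beta_i g_i=0$, and
$$
\Delta=S\,u\otimes u-\sum_{i\in J_j}\tfrac{|\beta_i|^2}{a_i}\,g_i\otimes g_i,\qquad S:=\sum_{i\in J_j}|\beta_i|^2>0.
$$

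Now $\Delta$ is block-diagonal with respect to $W_j\oplus\gen\{u\}\oplus(W_j\oplus\gen\{u\})^\perp$ (since $g_i\in W_j\perp u$ for $i\in J_j$), while $S_\cF$ acts there as $c_j I\oplus c I\oplus(\cdot)$. Degenerate first-order eigenvalue perturbation thus yields: the $|K_j|$ eigenvalues of $S(t)$ emanating from $c_j$ are $c_j-t^2\gamma_\ell+O(t^3)$, where $(\gamma_\ell)$ are the eigenvalues of the positive semidefinite operator $\sum_{i\in J_j}\tfrac{|\beta_i|^2}{a_i}g_i\otimes g_i\big|_{W_j}$, satisfying $\gamma_\ell\geq 0$ and $\sum_\ell\gamma_\ell=S$; one eigenvalue from $c$ is shifted to $c+t^2S+O(t^3)$; and the remaining eigenvalues of $S_\cF$ are perturbed only by $O(t^3)$. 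Consequently,
$$
\Psi_\varphi(\cG_0(t))-\Psi_\varphi(\cG_0)=\sum_\ell\bigl[\varphi(c_j-t^2\gamma_\ell)-\varphi(c_j)\bigr]+\bigl[\varphi(c+t^2S)-\varphi(c)\bigr]+o(t^2).
$$
For $t>0$ small, $c+t^2S<c_j-t^2\gamma_\ell$ whenever $\gamma_\ell>0$, and strict convexity of $\varphi$ yields the strict secant-slope inequality
$$
\frac{\varphi(c+t^2S)-\varphi(c)}{t^2 S}<\frac{\varphi(c_j)-\varphi(c_j-t^2\gamma_\ell)}{t^2\gamma_\ell}
$$
for each such $\ell$. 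Multiplying by $t^2\gamma_\ell$, summing, and using $\sum_\ell\gamma_\ell=S>0$, the leading order-$t^2$ term is strictly negative, so $\Psi_\varphi(\cG_0(t))<\Psi_\varphi(\cG_0)$ for small $t>0$, contradicting local minimality.

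The main obstacles are (a) applying degenerate first-order perturbation theory carefully to track the multiplicity-$|K_j|$ eigenvalue $c_j$ under the perturbation, and (b) that $\varphi\in\convfs$ need not be differentiable, which is bypassed by phrasing strict convexity through secant slopes rather than through derivatives.
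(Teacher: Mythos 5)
Your proposal is correct and follows essentially the same route as the paper: both use the linear dependence relation to build a norm-preserving curve in $\tcal$ whose first-order term cancels, transporting mass from the $c_j$-eigenspace toward an eigenvector of the smaller eigenvalue $c$, and then contradict local minimality through strict convexity at second order (the paper organizes the second-order comparison via Weyl's inequality and a strict majorization on the reducing subspace $V=\gen\big(\{g_l\}\cup\{h\}\big)$, you via degenerate eigenvalue asymptotics and secant slopes). The only point you should make explicit is that your strictly negative order-$t^2$ term actually dominates the $o(t^2)$ remainder: by monotonicity of secant slopes of the convex $\varphi$, your bracketed quantity is bounded above by $-\epsilon\,t^2$ for small $t$, with $\epsilon>0$ coming from the slope gap between $\varphi$ near $c$ and near $c_j$ (e.g.\ $\varphi'_+(c)<\varphi'_-(c_j)$, valid since $c<c_j$ and $\varphi$ is strictly convex), which closes the contradiction.
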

\begin{proof}
Suppose that for some $j\in\I_p$ the family $\{g_i\}_{i\in J_j}$ is linearly dependent.
Hence there exist coefficients $z_l\in \C$, $l\in J_j$ (not all 
zero) such that every $|z_l|\leq 1/2$ and  
\begin{equation}\label{eq1}
\sum_{l\in J_j}\overline{z_l} \ a_l\rai \ g_l=0\ . 
\end{equation}
Let $I_j \inc J_j$ be given by $I_j =\{l\in J_j:\ z_l\neq 0\}$. Assume that there exists 
$c\in\sigma(S_\cF)$ such that $c<c_j$
and let $h\in \C^d$ be such that $\|h\|=1$ and  $S_\cF h=c \,h$. 
For $t\in (-1/2,1/2)$ let $\cF(t)=(\cF_0\coma \cG(t))$ where  
$\cG(t)=\{g_i(t)\}_{i\in\IN{k}}$ is given by 
$$
g_l(t) = \begin{cases}  \ (1-t^2\,|z_l|^2)^{1/2} g_l+t\,z_l\,a_l\rai \, h 
& \mbox{if} \ \ l\in I_j\,;  \\
\quad \quad\quad g_l & \mbox{if} \ \ l\in K\setminus I_j  \  .
\end{cases}  
$$ Notice that $\cG(t)\in \tcal $ for $t\in (-1/2,1/2)$.
 Let $\Preal(A)= \frac {A+A^*}{2}$ denote the real part of 
$A \in \mat$. For $l\in I_j$ then 
$$
g_l(t)\otimes g_l(t)
=(1-t^2\,|z_l|^2)\ g_l\otimes g_l+ t^2\,|z_l|^2\,a_l \ h\otimes h 
+ 2\,(1-t^2\,|z_l|^2)^{1/2}\,t \ \Preal(h\otimes \overline{z_l}\, a_l\rai\, g_l)
$$
Let $S(t)$ denote the frame operator of $\cF(t)=(\cF_0\coma \cG(t))\in\cafo$, so that $S(0)=S_\cF\,$. 
Note that 
$$
S(t)=S_\cF+t^2 \sum_{l\in I_j }  |z_l|^2 \left( - g_l\otimes g_l + a_l \ h\otimes h \right) + R(t)
$$ 
where $R(t)=2 \suml_{l\in I_j }(1-t^2\,|z_l|^2)^{1/2}\,t \ \Preal(h\otimes a_l\rai\,\overline{z_l}\, g_l)$. 
Then $R(t)$ is a smooth function such that 
$$
R(0) = 0 \ \ , \ \ 
R'(0)=\sum_{l\in I_j } \Preal(h\otimes \overline{z_l}\, a_l\rai\,g_l)
=\Preal(h\otimes \sum_{l\in I_j } \overline{z_l}\,a_l\rai\, g_l) \stackrel{\eqref{eq1}}{=} 0 \ ,
$$ 
and such that $R''(0)=0$. Therefore 
$\lim\limits_{t\rightarrow 0} \ t^{-2}\ R(t)=0 $.
We now consider 
$$
V=\gen\,\big(\,\{g_l:\ l\in I_j \}\cup \{h\}\,\big)
=\gen\,\big\{\,g_l:\ l\in I_j \,\big\}\stackrel{\perp}{\oplus} \C\cdot h\ .
$$
Then $\dim V=s+1$,  for $s=\dim\gen\{g_l:\ l\in I_j \}\geq 1$. 
By construction, the subspace $V$ reduces $S_\cF$ and $S(t)$ for 
$t\in\R$, in such a way that $S(t)|_{V^\perp}=S_\cF|_{V^\perp}$ 
for $t\in \R$. On the other hand 
\beq\label{adet}
S(t)|_{V}=S_\cF|_V+t^2 \sum_{l\in I_j }  |z_l|^2 \left( - g_l\otimes g_l 
+ a_l \ h\otimes h \right) + R(t) = A(t)+R(t)\in L(V)\ ,
\eeq
where we use the fact that the ranges of the selfadjoint operators 
in the second and third term in the formula above clearly lie in $V$. 
Then $\la\big(\,S_\cF|_V\,\big)=\big(\,c_j \, \uno_s\coma   c\, \big)
\in (\R^{s+1}_{>0})\da $ and 
$$\barr{rl}
\la \Big(\, \sum_{l\in I_j }  |z_l|^2  g_l\otimes g_l\,\Big) &
=(\gamma_1 \coma \ldots \coma \gamma_s \coma 0)
\in (\R^{s+1}_{\geq 0})\da \peso{with} \gamma_s>0 \ ,\earr
$$
where we have used the definition of $s$ and the fact that $|z_l|>0$ for $l\in I_j \,$ (and the known fact that 
if $S\coma T\in \matpos \implies R(S+T) = R(S)+R(T)\,$). 
Hence, for sufficiently small $t$, 
the spectrum of the operator  $A(t)\in L(V)$
defined in \eqref{adet} is 
$$\barr{rl}
\la\big(\, A(t)\,\big) 
&=\big(\, c_j-t^2\,\gamma_s \coma \ldots \coma c_j-t^2 \,\gamma_1 \coma c
+t^2 \, \sum_{l\in I_j }a_l\,|z_l|^2 \,\big) \in (\R^{s+1}_{\geq 0})\da \ , \earr	
$$ 
where we have used the fact that $\langle g_l \coma h\rangle=0$ for every $l\in I_j \,$. 
Let us now consider 
$$
\la\big(\, R(t)\,\big)
=\big(\,\delta_1(t) \coma \ldots \coma \delta_{s+1}(t)\, \big) 
\in (\R^{s+1}_{\geq 0})\da\peso{for} t\in \R \ .
$$ 
Recall that in this case $\lim\limits_{t\rightarrow 0}t^{-2} \delta_j(t)=0$ 
for $1\leq j\leq s+1$. Using Weyl's inequality 
on Eq. \eqref{adet},  we now see that 
$\lambda \big(\,S(t)|_V\,\big)\prec \la\big(\, A(t)\,\big)
+\la\big(\, R(t)\,\big)\igdef \rho(t)\in (\R^{s+1}_{\geq 0})\da$. We know that 
$$
\barr{rl}
\rho(t)&= \big(\, c_j-t^2\,\gamma_s+\delta_1(t) \coma 
\ldots \coma c_j-t^2 \,\gamma_1+\delta_s(t) \coma 
c+t^2 \, \sum_{l\in I_j }a_l\,|z_l|^2  +\delta_{s+1}(t)\, \big) 
\\ &\\
&=  
\Big(\,c_j-t^2\,(\gamma_s-\frac{\delta_1(t)}{t^2}) \coma \ldots \coma 
c_j-t^2 \,(\gamma_1-\frac{\delta_s(t)}{t^2}) \coma 
c+t^2 \, (\sum_{l\in I_j }a_l\,|z_l|^2+\frac{\delta_{s+1}(t)}{t^2})\,\Big)  \ .
\earr
$$
Since by hypothesis $c_j>c$ then, the previous remarks show that there exists $\varepsilon>0$ such that 
if $t\in(0,\varepsilon)$ then, for every $i\in \I_s$ 
$$c_j>c_j-t^2(\gamma_{s-i+1}-\frac{\delta_i(t)}{t^2})> c+t^2 \, (\sum_{l\in I_j }a_l\,|z_l|^2+\frac{\delta_{s+1}(t)}{t^2})\,.$$
The previous facts show that for $t\in(0,\varepsilon)$ then
 $\rho(t) \prec 
\lambda(S_\cF|_V)=\big(\,c_j \, \uno_s\coma   c\, \big)$ strictly. 
Since $\varphi$ is strictly convex,  
for every $t\in(0,\varepsilon)$ we have that  
$$
\Psi_\varphi\big(\,\cG(t)\,\big)\leq 
\tr \, \varphi\big(\,\la(S_\cF|_{V^\perp})\,\big) +\tr \,\varphi \big(\,\rho(t)\,\big)
< \tr \, \varphi\big(\,\la(S_\cF|_{V^\perp})\,\big) +\tr \,\varphi \big(\,\la(\,S_\cF|_{V}\,)\,\big) 
= \Psi_\varphi(\cG_0)  \ .
$$
This last fact contradicts 
the assumption that $\cG_0$ is a local minimizer of $\Psi_\varphi$ in $\tcal $.
\end{proof}

\pausa
Recall that, according to Notations \ref{notas 2}, $c_1>\ldots>c_p\,$. Thus, 
the following result is an immediate consequence of Proposition \ref{teo gen 4.6} above. 

\begin{cor}\label{coro son li} \rm
Let $\cF=(\cF_0\coma \cG_0)\in\cafo$ be as in Notations \ref{notas 2} and assume that $p>1$. Then, the family $\{g_i\}_{i\in J_j}$ is linearly independent for every $j\in\I_{p-1}$. In particular, by Eq. \eqref{cajas}, 
\beq
\dim (W_j)=|K_j| =|J_j|  \peso{for} j\in\I_{p-1}\ . \QEDP
\eeq

\end{cor}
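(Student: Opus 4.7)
The plan is essentially to read off the corollary from Proposition \ref{teo gen 4.6} by exploiting the ordering $c_1>c_2>\ldots>c_p>0$ fixed in Notations \ref{notas 2}. So the first step is simply to observe that, since we are assuming $p>1$, the scalar $c_p$ belongs to $\sigma(S_\cF)$ and satisfies $c_p<c_j$ for every $j\in\I_{p-1}$. This means that for each such $j$, the hypothesis of Proposition \ref{teo gen 4.6} is fulfilled with the choice $c=c_p$, so that proposition gives directly that $\{g_i\}_{i\in J_j}$ is linearly independent for every $j\in\I_{p-1}$.

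For the dimension identities, I would combine this linear independence with Eq. \eqref{cajas}. That equation tells us two things: $W_j=\gen\{g_i:i\in J_j\}$ and also $W_j=\gen\{v_i:i\in K_j\}$. Since $\cB=\{v_i\}_{i\in\I_d}$ is an ONB of $\C^d$, the vectors $\{v_i\}_{i\in K_j}$ are automatically linearly independent, so $\dim W_j=|K_j|$. The linear independence of $\{g_i\}_{i\in J_j}$ established in the first step then forces $\dim W_j=|J_j|$, which yields the chain $\dim W_j=|K_j|=|J_j|$ for $j\in\I_{p-1}$.

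Since the argument is essentially a one-line application of the previous proposition, there is no real obstacle; the only thing to be careful about is ensuring that the case $j=p$ is genuinely excluded. Indeed, for $j=p$ there is no $c\in\sigma(S_\cF)$ with $c<c_p$ (as $c_p$ is the smallest eigenvalue of $S=S_\cF|_W$), so Proposition \ref{teo gen 4.6} does not apply, and the family $\{g_i\}_{i\in J_p}$ need not be linearly independent — this is precisely why the statement is restricted to $\I_{p-1}$.
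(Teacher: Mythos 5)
Your proof is correct and is exactly the paper's argument: since $c_1>\dots>c_p$ and $c_p\in\sigma(S_\cF)$ (because $W$ reduces $S_\cF$, so $\sigma(S_\cF|_W)\subseteq\sigma(S_\cF)$), Proposition \ref{teo gen 4.6} applies with $c=c_p$ for each $j\in\I_{p-1}$, and Eq. \eqref{cajas} then gives $\dim W_j=|K_j|=|J_j|$. Only your closing aside is slightly inaccurate: the hypothesis of Proposition \ref{teo gen 4.6} involves $\sigma(S_\cF)$, not $\sigma(S_\cF|_W)$, so for $j=p$ the proposition may still apply when $S_\cF$ has an eigenvalue on $W^\perp$ below $c_p$; the restriction to $\I_{p-1}$ is simply because for $j=p$ such an eigenvalue need not exist, but this side remark does not affect the proof of the stated corollary.
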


\begin{cor}\label{lema caso particular} Consider the Notations \ref{notas 2} and assume that $k\geq d$. If 
$\cF=(\cF_0\coma \cG_0)\in\cafo$ satisfies that $p=1$ 
(i.e., if we let $W=R(S_{\cG_0})$ then $S_\cF|_W=c_1\,P_W\in L(W)$\,). Then:
\begin{enumerate}
  \item $(\lambda,\,\bf a)$ is feasible;
	\item $\la\ua(S_\cF)=\nu(\la\coma\ca)$ and $\la(S_{\cG_0})=\mu(\la\coma\ca)$ (see Definition \ref{feasibilidad});
\item $\cF$ is a {global minimum} of $\Psi_\varphi$ in $\tcal $.
\end{enumerate}
\end{cor}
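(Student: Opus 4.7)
The plan is to exploit $p=1$ together with the ONB $\{v_i\}$ from Theorem 3.7 to compute $\la(S_\cF)$ and $\la(S_{\cG_0})$ explicitly, identify them with $\nu(\la,\ca)$ and $\mu(\la,\ca)$, and then invoke the feasible-case Theorem 2.11 for the optimality conclusion. Throughout, I set $s=s_\cF=\rk S_{\cG_0}=\dim W$, and note that since $\cG_0\in\tcal$ exists, the Schur--Horn Proposition 2.4 already gives $\ca\prec\la(S_{\cG_0})=\mu$; so once I identify $\mu=\mu(\la,\ca)$, feasibility (item 1) will follow automatically, and Theorem 2.11 will deliver item 3.

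The core step is item 2. Because $v_i$ simultaneously diagonalizes $S_{\cF_0}$ and $S_{\cG_0}$ with $\la$ increasing and $\mu$ decreasing, the eigenvalues of $S_\cF$ are $\la_i+\mu_i$. By Notations 3.8, $\mu_i>0$ for $i\le s$ and $\mu_i=0$ for $i>s$. The hypothesis $p=1$ means $S_\cF|_W=c_1 P_W$, so $\la_i+\mu_i=c_1$ for every $i\in\I_s$, which gives $\mu_i=c_1-\la_i>0$ and in particular $\la_s<c_1$. Summing and using $\tr S_{\cG_0}=\tr\ca=t$ yields
\[
\sum_{i\in\I_s}(c_1-\la_i)=t.
\]
If $s<d$, I would then apply Proposition 3.5 in the following way: since $p=1$, we have $J_1=\I_k$ (every $g_i$ lies in $W$, where $S_\cF$ acts as $c_1$); if some $\la_i$ with $i>s$ were strictly less than $c_1$, Proposition 3.5 would force $\{g_i\}_{i\in\I_k}$ to be linearly independent, which is impossible as these $k\ge d>s$ vectors lie in the $s$-dimensional subspace $W$. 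Hence $\la_{s+1}\ge c_1$, so $\la_s<c_1\le\la_{s+1}$. Combined with the trace identity above, this shows $h_\la(c_1)=\sum_i(c_1-\la_i)^+=t$, whence $c_1=c(t)$ by the uniqueness in \eqref{el c}. In the remaining case $s=d$, $W=\C^d$ and $S_\cF=c_1 I$; the equation $d\,c_1-\tr\la=t$ and the case distinction in Remark 3.4 again give $c_1=c(t)$. In both cases, comparing with Eq.~\eqref{def nuel}--\eqref{def muel} and using $k\ge d$ (so $\nu(\la,\ca)=\nu(\la,t)$ by Definition 3.8),
\[
\nu(\la,\ca)_i=\max\{c_1,\la_i\}=\begin{cases}c_1,&i\le s,\\ \la_i,&i>s,\end{cases}
\qquad \mu(\la,\ca)_i=\begin{cases}c_1-\la_i,&i\le s,\\ 0,&i>s,\end{cases}
\]
which matches exactly $\la\ua(S_\cF)$ and $\mu=\la(S_{\cG_0})$, proving item 2.

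With item 2 in hand, $\mu(\la,\ca)=\la(S_{\cG_0})$, so Proposition 2.4 applied to $S_{\cG_0}$ (which exists because $\cG_0$ does) yields $\ca\prec\la(S_{\cG_0})=\mu(\la,\ca)$, which is precisely the feasibility condition \eqref{desi mayo feas}, giving item 1. Finally, since $(\la,\ca)$ is feasible and $\la(S_\cF)=\nu(\la,\ca)\da$, the equality clause of Theorem 2.11 gives $\text{P}_\varphi(\cF)=\sum_i\varphi(\nu_i(\la,\ca))=\min\{\text{P}_\varphi(\cF'):\cF'\in\cafo\}$, establishing item 3.

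The main obstacle is the spectral inequality $c_1\le\la_{s+1}$ in the subcase $s<d$: without it, $c_1$ need not coincide with $c(t)$ and the identification of $\nu(\la,\ca)$ breaks down. My plan circumvents this by leveraging the linear-independence conclusion of Proposition 3.5 against the dimension constraint $k\ge d>s$, which is precisely where the assumption $k\ge d$ (and the reduction argument $b)$ of Remark 3.9 used to justify it) enters decisively.
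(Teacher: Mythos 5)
Your proposal is correct and follows essentially the same route as the paper's proof: the case split $s_\cF=d$ versus $s_\cF<d$, the use of the linear dependence of $\{g_i\}_{i\in\I_k}=\{g_i\}_{i\in J_1}$ (forced by $k\ge d>\dim W$) against Proposition \ref{teo gen 4.6} to get $c_1\le\la_i$ for $i>s_\cF$, the identification of $\nu(\la\coma\ca)$ via Remark \ref{la pinta del nuel}, feasibility via Proposition \ref{frame mayo}, and global minimality via Theorem \ref{pre teo caso feasible}. The only blemishes are the shifted reference numbers (e.g.\ the results you call Theorem 3.7, Proposition 3.5 and Theorem 2.11 are Theorem \ref{teo estruc 1}, Proposition \ref{teo gen 4.6} and Theorem \ref{pre teo caso feasible}), which do not affect the argument.
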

\begin{proof}
Assume first that $s_\cF=d$, i.e. that $W=\C^d$. In this case $S_\cF=S_{\cF_0}+S_{\cG_0}=c_1\,I$ and $\cF$ is a tight frame. Using the comments at the end of Remark \ref{la pinta del nuel} and Definition \ref{feasibilidad} (notice that in this case $\min\{d,k\}=d$) we see that $\nu(\la\coma\ca)=c_1\,\uno_d\,$. Hence, 
$$
\mu(\la\coma\ca)=c_1\,\uno_d-\la=\la\da(S_{\cG_0})\ .
$$ 
Since $S_{\cG_0}$ is the frame operator of $\cG_0\in\tcal $, Proposition \ref{frame mayo} shows that the majorization relation $\ca\prec\mu(\la\coma\ca)$ holds,  so that the pair $(\la\coma\ca)$ is feasible. The fact that $\cG_0$ is a global minimizer of $\Psi_\varphi$ in 
$\tcal $ now follows from  Theorem \ref{pre teo caso feasible} (or directly, being a tight completion).

\pausa
We now consider the case $s_\cF<d$. Hence, $\mu_i>0$ for $1\leq i\leq s_\cF$ and 
$$ S_\cF=\sum_{i\in\I_d} (\la_i+\mu_i)\ v_i\otimes v_i=\sum_{i\in\I_{s_\cF}} c_1\ v_i\otimes v_i + \sum_{i=s_\cF+1}^d \la_i\ v_i\otimes v_i \,.$$
In particular, $c_1=\la_{s_\cF}+\mu_{s_\cF}>\la_{s_\cF}\,$. On the other hand, $k\geq d>\dim W$, and thus $\{g_i\}_{i\in\I_k}=\{g_i\}_{i\in J_1}$ is a linearly dependent family. Hence, Proposition \ref{teo gen 4.6} implies that $c_1\leq \la_i$ for $s_\cF+1\leq i\leq d$; in particular, $c_1\leq \la_{s_\cF+1}$.

\pausa
The previous facts together with Remark \ref{la pinta del nuel} show that $\la(S_\cF)\ua=(c_1\,\uno_{s_\cF},\la_{s_\cF+1},\ldots,\la_d)=\nu(\la\coma\ca)$, according to Definition \ref{feasibilidad}. Moreover, we also get that $\la(S_{\cG_0})=\nu(\la\coma\ca) - \la=\mu(\la\coma\ca)$. Again, since $\cG_0\in\tcal $ we conclude that the majorization relation $\ca\prec\mu(\la\coma\ca)$ holds, and therefore the pair $(\la\coma\ca)$ is feasible. 
As before, Theorem \ref{pre teo caso feasible} shows that $\cG_0$ is a global minimizer of $\Psi_\varphi$ in 
$\tcal $.
\end{proof}
\pausa
The next result is \cite[Proposition 4.5]{mrs3}. Although the result is stated for a global minimum in \cite{mrs3}, the inspection of its proof 
(for $\vfi \in \LLM$, so that the previous results hold) reveals that it also holds for a local minimum as well. Recall from Notations \ref{notas 2} that, if $p>1$ then 
$$K_j = \{ i \in \I_{s_\cF} :  
\la_i +\mu_i = c_j\}  
\py J_j = \{i\in \I_k: S_\cF\,g_i = c_j \, g_i\}  
$$ 
for each $j\in \I_p\,$, where $\la = \la\ua(S_{\cF_0})$ and $\mu = \mu\da=\la(S_{\cG_0})$.

\begin{pro}\label{los J ordenados} \rm 
Consider the Notations \ref{notas 2}
with $\cF= (\cF_0\coma \cG_0)\in \cC_\ca^{\rm op}(\cF_0)$ 
and assume that $k\geq d$ and $p>1$. 
Given $i\coma r \in \I_p\,$, $h \in J_i$ and $l \in J_r\,$ then 
$$
i<r \implies a_h-a_l \ge c_i - c_r > 0 
\implies h<l \ .
$$
In particular, there exist $s_0=0<s_1<\ldots<s_{p-1}<s_\cF\leq d$ such that 
\beq 
J_j=\{s_{j-1}+1\coma \ldots\coma s_{j}\} \ , \quad j\in\I_{p-1} 
\py J_p=\{s_{p-1}+1\coma \ldots\coma k\}\ . \QEDP \eeq
\end{pro}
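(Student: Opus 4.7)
Here is my plan. Fix indices $h\in J_i$, $l\in J_r$ with $i<r$ in $\I_p$, and set $\alpha = a_h - a_l$, $\beta = c_i - c_r > 0$. Since $g_h\in W_i$ and $g_l\in W_r$ are eigenvectors of $S_\cF$ for distinct eigenvalues, they are orthogonal. Writing $u_h = g_h/a_h\rai$, $u_l = g_l/a_l\rai$ and $V = \gen\{u_h,u_l\}$, the subspace $V$ reduces $S_\cF$, with $S_\cF|_V = \mathrm{diag}(c_i, c_r)$ in the basis $\{u_h, u_l\}$. The strategy is to construct two norm-preserving one-parameter perturbations of $\cG_0$ whose action stays inside $V$, so that $V$ reduces $S(t) = S_{\cF_0} + S_{\cG(t)}$ and its spectrum off $V$ is frozen; then to read off inequalities on $\alpha$ from local minimality and the strict convexity of $\varphi$.

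The first perturbation is the symmetric rotation
$$
g_h(t) = \cos t\,g_h + \sin t\,(a_h/a_l)\rai\, g_l, \qquad g_l(t) = -\sin t\,(a_l/a_h)\rai\, g_h + \cos t\,g_l,
$$
with $g_j(t) = g_j$ for $j\notin\{h,l\}$. A direct computation gives $\tr S(t)|_V \equiv c_i + c_r$ and $\bigl(\mu_+(t) - \mu_-(t)\bigr)^2 = \beta^2 - 4\alpha(\beta-\alpha)\sin^2 t$. Since the spectrum of $S(t)$ off $V$ is unchanged and the map sending a pair of eigenvalues with fixed sum to its $\varphi$-trace is strictly increasing in the spread (by Theorem \ref{teo intro prelims mayo}(3) applied to strictly convex $\varphi$), local minimality of $\cG_0$ at $t=0$ forces $\alpha(\beta-\alpha)\leq 0$, i.e.\ either $\alpha\leq 0$ or $\alpha\geq\beta$.

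To discard the first alternative I use an asymmetric shear inside $V$:
$$
g_h(s) = a_h\rai(\cos s\,u_h + \sin s\,u_l), \qquad g_l(s) = a_l\rai(-\sin s'\,u_h + \cos s'\,u_l),
$$
with $s'=s'(s)$ defined implicitly by $a_h\sin 2s = a_l\sin 2s'$, so that $s' = (a_h/a_l)\,s + O(s^3)$. This precise choice annihilates the off-diagonal entry of $M(s) := g_h(s)\,g_h(s)^* + g_l(s)\,g_l(s)^*$ in the $\{u_h,u_l\}$ basis, yielding $M(s) = \mathrm{diag}(a_h - \eps(s),\,a_l + \eps(s))$ with $\eps(s) = s^2 a_h(a_l - a_h)/a_l + O(s^4)$. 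Consequently $S(s)|_V = \mathrm{diag}(c_i - \eps(s),\,c_r + \eps(s))$, with invariant trace $c_i+c_r$ and spread $\beta - 2\eps(s)$. If $\alpha<0$ (i.e.\ $a_l>a_h$) then $\eps(s)>0$ for small $s\neq 0$; the spread strictly decreases and strict convexity of $\varphi$ gives $\Psi_\varphi(\cG(s))<\Psi_\varphi(\cG_0)$, contradicting local minimality. Combined with the rotation bound this forces $\alpha\geq\beta$, i.e.\ $a_h - a_l \geq c_i - c_r > 0$.

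From $a_h > a_l$ and the non-increasing ordering $\ca = \ca\da$ we conclude $h < l$. Running the argument over every pair $(h,l)\in J_i\times J_r$ with $i<r$ shows every element of $J_i$ precedes every element of $J_r$; since $\{J_j\}_{j\in\I_p}$ partitions $\I_k$ and is totally ordered by this relation, each $J_j$ is an interval of consecutive integers. Setting $s_j := \sum_{i\leq j} |J_i|$ yields $0 = s_0 < s_1 < \cdots < s_{p-1} < s_\cF$, where the strict inequality uses Corollary \ref{coro son li} (giving $|J_j| = |K_j|$ for $j<p$, hence $s_{p-1} = \sum_{j<p}|K_j| < s_\cF$ because $|K_p|\geq 1$); and $J_j = \{s_{j-1}+1, \ldots, s_j\}$ for $j<p$, $J_p = \{s_{p-1}+1, \ldots, k\}$. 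The main technical obstacle is the shear perturbation: the symmetric rotation alone leaves the sign of $\alpha$ undetermined, and it is the asymmetric redistribution of mass between $u_h$ and $u_l$—not a unitary motion—that forces $a_h > a_l$.
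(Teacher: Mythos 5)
Your two perturbation families are set up correctly and the computations check out (orthogonality of $g_h$ and $g_l$, the fact that $V=\operatorname{span}\{u_h,u_l\}$ reduces $S(t)$ with the spectrum frozen on $V^\perp$, the spread formula $\beta^2-4\alpha(\beta-\alpha)\sin^2 t$ for the rotation, and the identity $M(s)=\mathrm{diag}(a_h-\eps(s),a_l+\eps(s))$ with $\eps(s)=s^2a_h(a_l-a_h)/a_l+O(s^4)$ for the shear). But the final logical step has a genuine hole at $\alpha=0$, i.e.\ $a_h=a_l$. The rotation only gives the dichotomy $\alpha\le 0$ or $\alpha\ge\beta$, and the shear rules out only $\alpha<0$: when $a_h=a_l$ the constraint $a_h\sin 2s=a_l\sin 2s'$ forces $s'=s$, so $\eps(s)\equiv 0$ and the shear is spectrally trivial, exactly as the rotation is ($\beta^2-4\alpha(\beta-\alpha)\sin^2 t\equiv\beta^2$). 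Hence neither family yields any contradiction when $a_h=a_l$, and ``combined \dots this forces $\alpha\ge\beta$'' does not follow. This is not a harmless boundary case: the proposition asserts the strict inequality $a_h-a_l\ge c_i-c_r>0$, and you need $a_h>a_l$ (not merely $a_h\ge a_l$) to get $h<l$ from $\ca=\ca\da$, which is what drives the consecutive-interval structure of the $J_j$'s.

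The gap can be closed within your framework, but it needs an extra idea rather than more of the same computation. For instance: if $a_h=a_l$ then along your rotation one has $S_{\cG(t)}=S_{\cG_0}$, hence $S(t)=S_\cF$ and $\Psi_\varphi(\cG(t))=\Psi_\varphi(\cG_0)$ for all $t$; so for small $t>0$ the family $\cG(t)$ is again a local minimizer (it attains the local minimum value inside the minimizing neighborhood of $\cG_0$), yet $g_h(t)=a_h^{1/2}(\cos t\,u_h+\sin t\,u_l)$ is not an eigenvector of $S(t)=S_\cF$ because $u_h,u_l$ are eigenvectors for the distinct eigenvalues $c_i\ne c_r$ — contradicting item 1 of Theorem \ref{teo estruc 1}. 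Some such argument (or a perturbation whose effect on the spectrum does not degenerate at $a_h=a_l$) must be supplied. Apart from this, the concluding combinatorics (using Corollary \ref{coro son li} to get $s_{p-1}<s_\cF$ and the interval description of the $J_j$'s) is fine. For comparison: the paper itself does not prove this proposition; it invokes \cite[Proposition 4.5]{mrs3}, noting that the proof there adapts from global to local minimizers, so your perturbation argument is an independent route — but as written it is incomplete precisely at the case $a_h=a_l$.
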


\begin{pro}\label{los la} Consider the Notations \ref{notas 2}
with $\cF= (\cF_0\coma \cG_0)\in \cC_\ca^{\rm op}(\cF_0)$ and assume that $k\geq d$ and $p>1$. We have that:
$$
\barr{rl}
i \in K_1 &\implies 
i< j  \ ( \implies 
\la_i \le \la_j \,)  \peso{for every} 
j \in \bigcup\limits_{r>1} K_r = \I_{s_\cF} \setminus K_1 \ . \earr
$$ 
Inductively, by means of Remark \ref{induc}, 
we deduce that all sets $K_j$ consist of consecutive indexes. 
Therefore, if $s_0=0<s_1<\ldots<s_{p-1}<s_p\stackrel{\rm def}{=}s_{\cF}$ are as in Corollary \ref{los J ordenados} then 
$$ 
K_j=\{s_{j-1}+1\coma \ldots \coma s_{j}\} \ , \quad j\in\I_{p-1} \py K_p=\{s_{p-1}+1,\ldots,s_p\}\ .
$$
\end{pro}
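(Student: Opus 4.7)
I would prove the first statement $K_1=\{1,\ldots,s_1\}$ by contradiction: suppose there exist $i\in K_1$ and $l\in K_r$ with $r>1$ and $l<i$. Because $\mu=\la\da(S_{\cG_0})$ is non-increasing while $\mu_l=c_r-\la_l$ and $\mu_i=c_1-\la_i$, the inequality $\mu_l\geq\mu_i$ forces $\la_i-\la_l\geq c_1-c_r>0$; in particular $\la_i>\la_l$. The strategy is to build a smooth two-parameter family $\cG(t)\in\tcal$ with $\cG(0)=\cG_0$ such that for an appropriate choice of parameters the second-order Taylor coefficient of $\Psi_\vfi(\cG(t))$ at $t=0$ is strictly negative, which contradicts the local-minimum hypothesis. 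Writing $g_h=\sum_{k\in K_1}\al_{h,k}v_k$ for $h\in J_1$ (so $\al_{h,l}=0$ since $l\notin K_1$) and $g_m=\sum_{k\in K_r}\al_{m,k}v_k$ for $m\in J_r$ (so $\al_{m,i}=0$), define, for real $\beta_1,\beta_2$,
\[
g_h(t)=\sum_{k\in K_1,\,k\neq i}\al_{h,k}v_k+\al_{h,i}\bigl(\cos(\beta_1 t)v_i+\sin(\beta_1 t)v_l\bigr),\quad h\in J_1,
\]
\[
g_m(t)=\sum_{k\in K_r,\,k\neq l}\al_{m,k}v_k+\al_{m,l}\bigl(\cos(\beta_2 t)v_l+\sin(\beta_2 t)v_i\bigr),\quad m\in J_r,
\]
and $g_n(t)=g_n$ otherwise. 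Orthonormality of $\{v_k\}$ gives $\|g_h(t)\|=\|g_h\|$ and $\|g_m(t)\|=\|g_m\|$, so $\cG(t)\in\tcal$.

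The expansion $S_{\cF(t)}=S_\cF+tM_1+t^2M_2+O(t^3)$ rests on the key identity $\sum_{h\in J_1}\overline{\al_{h,i}}g_h=(c_1-\la_i)v_i$, which follows from $\sum_{h\in J_1}g_h\otimes g_h=S_{\cG_0}|_{W_1}=\sum_{k\in K_1}(c_1-\la_k)v_k\otimes v_k$, together with its $J_r$-counterpart. A direct Taylor expansion then yields, writing $A=c_1-\la_i$, $B=c_r-\la_l$ and $C=c_1-c_r$,
\[
M_1=(\beta_1 A+\beta_2 B)\,(v_l\otimes v_i+v_i\otimes v_l),\quad M_2=(\beta_1^2 A-\beta_2^2 B)\,(v_l\otimes v_l-v_i\otimes v_i).
\]
By Theorem \ref{teo estruc 1} the basis $\{v_k\}$ diagonalizes $S_\cF$ with $v_i\in W_1$ and $v_l\in W_r$, so the Daleckii--Krein expansion of $\tr\,\vfi(\,\cdot\,)$ shows that the first-order coefficient of $\Psi_\vfi(\cG(t))$ vanishes (the diagonal of $M_1$ is zero) and the second-order coefficient equals $(\vfi'(c_1)-\vfi'(c_r))\,(K^2-LC)/C$, where $K=\beta_1 A+\beta_2 B$ and $L=\beta_1^2 A-\beta_2^2 B$.

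The map $(\beta_1,\beta_2)\mapsto K^2-LC$ is the quadratic form with symmetric matrix $\bigl(\begin{smallmatrix}A(A-C)&AB\\AB&B(B+C)\end{smallmatrix}\bigr)$, and a short computation simplifies its determinant to $-ABC(\la_i-\la_l)$. Since $A,B,C>0$ and $\la_i>\la_l$, this determinant is strictly negative, so the form is indefinite and takes negative values; combined with $\vfi'(c_1)>\vfi'(c_r)$ from strict convexity, the second-order coefficient of $\Psi_\vfi(\cG(t))$ becomes strictly negative. Thus $\Psi_\vfi(\cG(t))<\Psi_\vfi(\cG_0)$ for small $t>0$, contradicting the local-minimality of $\cG_0$; hence $K_1=\{1,\ldots,s_1\}$.

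For the remaining $K_j$ I would apply Remark \ref{induc}(a) with $j=1$: the sequence $\cG_0^{(1)}=\{g_h\}_{h\in L_1}$ is a local minimizer of the induced potential on $\T_{\cH_1}(\ca^{L_1})$, the reduced problem has only $p-1$ spectral levels, and the role of $K_1$ there is played by the old $K_2$. Induction on $p$ then yields $K_j=\{s_{j-1}+1,\ldots,s_j\}$ for every $j$. The principal technical obstacle is isolating the clean algebraic form of $(M_1,M_2)$: only after the determinant of the quadratic form reduces to $-ABC(\la_i-\la_l)$ does one see that the strict inequality $\la_i>\la_l$---the consequence of the ordering hypothesis $l<i$ forced by $\mu$ non-increasing---is precisely what produces the indefiniteness needed for the contradiction.
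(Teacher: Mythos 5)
Your construction is genuinely different from the paper's, and its computations are correct. The paper rules out $j<i$ (with $i\in K_1$, $j\in K_r$, $r>1$) by rotating only the vectors $\{g_h\}_{h\in J_1}$ (moving their $v_i$-components toward $v_j$) and then composing with an explicit unitary curve $U(t)$ that re-diagonalizes $S_{\cG(t)}$ in the basis $\cB$; this reduces matters to an exact two-level transfer $c_1\mapsto c_1-E(t)$, $c_r\mapsto c_r+E(t)$ and a strict majorization, using only $c_1>c_r$ and $\mu_j\ge\mu_i$. You instead use a two-parameter rotation acting on both $J_1$ and $J_r$ and a second-order expansion, exploiting the indefiniteness of the form $K^2-LC$. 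The extra parameter is genuinely needed: with $\beta_2=0$ one gets $K^2-LC=\beta_1^2A(A-C)$ with $A-C=c_r-\la_i$, which can be positive, so the single rotation alone need not decrease the potential at second order (this is exactly what the paper's conjugation by $U(t)$ compensates for). Your identities for $M_1$, $M_2$, the coefficient $(\varphi'(c_1)-\varphi'(c_r))(K^2-LC)/C$, and the determinant $-ABC(\la_i-\la_l)<0$ all check out, and your inequality $\la_i-\la_l\ge c_1-c_r>0$ (from $\mu_l\ge\mu_i$) plays the role that $\mu_j\ge\mu_i$ plays in the paper.

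The one genuine gap is regularity of $\varphi$: the class $\convfs$ consists of strictly convex functions on $\R_{\ge0}$ with no differentiability assumed, whereas your argument invokes $\varphi'(c_1)$, $\varphi'(c_r)$ and the Daleckii--Krein second-order expansion of $\tr\,\varphi(S(t))$, which needs $\varphi$ smooth (at least $C^2$ near the spectrum); for a strictly convex $\varphi$ with a corner at $c_1$ your Taylor coefficient is not even defined. Fortunately your own construction closes this without derivatives. Summing over $h\in J_1$ and $m\in J_r$ and using $\sum_{h}\overline{\al_{h,i}}\,g_h=A\,v_i$ and $\sum_{m}\overline{\al_{m,l}}\,g_m=B\,v_l$, the difference $S_{\cF(t)}-S_\cF$ is supported \emph{exactly} on $V=\gen\{v_i\coma v_l\}$ and has zero trace there for every $t$; the $2\times 2$ block of $S_{\cF(t)}$ on $V$ has constant trace $c_1+c_r$ and its eigenvalues $\theta_1(t)\ge\theta_2(t)$ satisfy $(\theta_1(t)-\theta_2(t))^2=C^2+4t^2\,(K^2-LC)+O(t^4)$. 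Choosing $(\beta_1\coma\beta_2)$ with $K^2-LC<0$ therefore gives $(\theta_1(t)\coma\theta_2(t))\prec(c_1\coma c_r)$ strictly, hence $\la(S_{\cF(t)})\prec\la(S_\cF)$ strictly for small $t\neq0$, and items 1 and 3 of Theorem \ref{teo intro prelims mayo} yield $\Psi_\vfi(\cG(t))<\Psi_\vfi(\cG_0)$ using strict convexity alone --- the same contradiction, in the paper's spirit. With that replacement, and noting that the identification $K_1=\{1,\ldots,s_1\}$ also uses $|K_1|=|J_1|=s_1$ from Corollary \ref{coro son li} and Proposition \ref{los J ordenados} before the reduction of Remark \ref{induc} is iterated, your proof is complete.
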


\begin{proof} Assume that there exist $i\in K_1$ and $j\in K_r$ for $1< r$ such that $j<i$. 
In this case, 
$$
\mu_i\leq \mu_j   \ \ \coma \  \ \la_j\leq \la_i  \py  c_1=\la_i+\mu_i> c_r=\la_j+\mu_j\ . 
$$
Consider $\cB=\{v_l\}_{l\in\I_d}$ as in 
Notations \ref{notas 2}. For $t\in[0,1)$ we let 
\beq\label{defi glt}
 g_l(t)=  g_l + \big(\, (1-t^2)^{1/2}-1\,\big)\ \langle g_l,v_i\rangle \ v_i + t \ \langle g_l,v_i\rangle \ v_j \peso{for} l\in \I_k\,. 
\eeq
Notice that, if $l \in J_1\,$, then  
$S_\cF \, g_l = c_1\, g_l \implies \api g_l\coma v_j\cpi =0$. 
Similarly, if $l\in \I_k\setminus J_1$ then $\langle g_l\coma v_i\rangle =0$ (so that  $ g_l(t)=g_l$). 
Therefore the sequence   $\cG(t)=\{g_l(t)\}_{l\in\I_k}\in\tcal $ for $t\in[0,1)$. 
Let $P_i=v_i\otimes v_i$ and $P_{ji}=v_j\otimes v_i$ (so that $P_{ji}\,x=\langle x\coma v_i\rangle \ v_j$). Then, 
for every $t \in [0\coma 1)$,  
$$
g_l(t)= \big(\,I+ ((1-t^2)^{1/2}-1)\ P_i + t\ P_{ji}\,\big)\ g_l\peso{for every} l\in \I_k\ . 
$$
That is, if $V(t)=I+ ((1-t^2)^{1/2}-1)\ P_i + t\ P_{ji}\in\mat$ then $g_l(t)=V(t)\ g_l\,$ for every $l\in \I_k$ and $t\in[0,1)$.
Therefore, we get that
$$
\cG(t)=V(t)\,G=\{V(t)\, g_l\}_{l\in\I_n} \implies S_{G(t)}=V(t)\, S_\cG\, V(t)^*\peso{for} t\in[0,1)\ .
$$
Hence, we obtain the representation 
$$ 
S_{\cG(t)}=\sum_{\ell\in\I_d\setminus\{i,\,j\}}\mu_\ell \ v_\ell\otimes v_\ell 
+ \gamma_{11}(t)\ v_j\otimes v_j+ \gamma_{12}(t)\ v_j\otimes v_i 
+ \gamma_{21}(t)\ v_i\otimes v_j + \gamma_{22}(t)\ v_i\otimes v_i\ ,
$$ 
where the functions $\gamma_{rs}(t)$ are the entries of 
$A(t)=\big(\,\gamma_{rs}(t)\,\big)_{r\coma s=1}^2\in\cH(2)$ 
defined by 
$$
A(t)= \begin{pmatrix} 1 & t \\ 0 & (1-t^2)^{1/2}\end{pmatrix} \begin{pmatrix} \mu_j& 0 \\ 0&\mu_i\end{pmatrix} 
\begin{pmatrix}1& 0\\ t& (1-t^2)^{1/2}\end{pmatrix}  \peso{for every} t\in [0\coma 1)\ .
$$
It is straightforward to check that $\tr(A(t))=\mu_i+\mu_j$ and that $\det(A(t))=(1-t^2)\, \mu_j\,\mu_i\,$. 
These facts imply that 
if we consider the continuous function $L(t)=\la_{\max}(A(t))$ then $L(0)=\mu_j$ and $L(t)$ is strictly 
increasing in $[0,1)$. 
More straightforward computations show that we can consider continuous curves $x_i(t):[0,1)\rightarrow \C^2$ 
which satisfy that  $\{x_1(t),\,x_2(t)\}$ is ONB of $\C^2$ such that
$$A(t)\, x_1(t)=L(t)\, x_1(t) \peso{for} t\in[ 0,1) \py 
x_1(0)=e_1 \ ,\ \ x_2(0)=e_2\,.$$
For $t\in[0,1)$ we let $X(t)=(u_{r,s}(t))_{r,s=1}^2\in \cU(2)$ with columns $x_1(t) $ and $x_2(t)$. 
By construction, $X(t)=[0,1)\rightarrow \cU(2)$ 
is a continuous curve such that $X(0)=I_2\,$ and such that 
$$ X(t)^*\, A(t)\, X(t)=\begin{pmatrix} L(t) & 0 \\ 0 & \mu_i+\mu_j-L(t)\end{pmatrix}\,.$$
Finally, consider the continuous curve $U(t):[0,1)\rightarrow \matud$ given by 
$$
U(t)=  u_{11}(t) \,v_j\otimes v_j+u_{12}(t)\, v_j\otimes v_i + u_{21}(t)\, v_i\otimes v_j + u_{22}(t)\, v_i\otimes v_i
+\sum_{\ell\in\I_d\setminus\{i,\,j\}} v_l\otimes v_l \ .
$$ 
Notice that $U(0)=I$;  also, let $\tilde \cG(t)=U(t)^*\, \cG(t)\in\tcal $ for $t\in[0,1)$, which is a 
continuous curve such that $\tilde \cG(0)=\cG_0\,$.
 In this case, for $t\in[0,1)$ we have that 
$$
S_{\tilde \cG(t)}=U(t)^*\, S_{\cG(t)}\, U(t)
= L(t)\, v_j\otimes v_j + (\mu_i+\mu_j-L(t)) \,v_i\otimes v_i
+\sum_{\ell\in\I_d\setminus\{i,\,j\}}\mu_\ell \ v_\ell\otimes v_\ell \ .
$$ 
In other words, $U(t)$ is constructed in such a way that $\cB=\{v_l\}_{i\in\I_d}$ consists of eigenvectors of $S_{\tilde \cG(t)}$ for 
every $t\in[0,1)$. Hence, if $\tilde \cF(t)=(\cF_0 \coma \tilde \cG(t))$ and  $E(t) = L(t)-\mu_j\geq 0$ for $t\in [0\coma 1)$, we get that 
$$ 
S_{\tilde \cF(t)}= (c_r+E(t)\,)\, v_j\otimes v_j + (c_1- E(t)\,)\, v_i\otimes v_i
+\sum_{\ell\in\I_d\setminus\{i,\,j\}}(\la_\ell + \mu_\ell) \ v_\ell\otimes v_\ell \ .
$$ 
Let $\varepsilon>0$ be such that 
$E(t)=L(t)-\mu_j \leq \frac{c_1-c_r}{2}$ for $t\in[0,\varepsilon]$. (recall that 
$L(0)=\mu_j $ and that $c_1>c_r$). %
Since $L(t)$ (and hence $E(t)$) is strictly 
increasing in $[0,1)$, we see that 
$$
(c_1- E(t)\coma c_r+ E(t))\prec (c_1\coma c_r)\implies \la(S_{\tilde \cF(t)})\prec \la(S_{\cF}) 
\peso{for} t\in(0\coma \varepsilon] \ , 
$$ 
where the majorization relations above are strict. Hence, since $\varphi\in\convfs$ then 
$$ 
\Psi_\varphi(\tilde \cG(t))=\tr(\varphi(\la(S_{\tilde \cF(t)})))< \tr(\varphi(\la(S_{\cF})))
=\Psi_\varphi(\cG_0)\peso{for} t\in(0\coma \varepsilon]\ .
$$
This last fact contradicts the local minimality of $\cG_0$ and the result follows. 
The description of the sets $K_i$'s now follows from Corollary \ref{coro son li}. 
\end{proof}

\section{Local minima are global minima} \label{sec loc son glob}

Throughout this section we adopt Notations \ref{not gen min loc} and \ref{notas 2}. Recall  that we have fixed a 
map $\varphi\in\LLM$ and a sequence 
$\cG_0=\{g_i\}_{i\in\I_k}\in \tcal $ which is 
a {
local minimum} of the potential $\Psi_\varphi$ in $\tcal $, 
among several other specific notations.

\pausa
In what follows, we show that local minimizers (as $\cG_0$) of $\Psi_\varphi$ in $\tcal $ are global minimizers (see Theorem \ref{teo locs son globs} below).
In order to do this, we develop a detailed study of the inner structure of local minimizers, based on the results from Section \ref{sec 3 tutti}.

\begin{rem}[Case $k\geq d$] \label{rem caso k mayor d}Consider the Notations \ref{not gen min loc} and \ref{notas 2} and assume that $k\geq d$.
 Then, according to Propositions \ref{los J ordenados} and \ref{los la}, there exist $p\in\I_d$ and $s_0=0<s_1<\ldots<s_{p-1}<s_p=s_\cF\leq d$, where $s_\cF=\rk(S_{\cG_0})$, such that 
\beq\label{los Kj}
\barr{rl}
K_j &=J_j =\{s_{j-1}+1\coma \ldots \coma  s_j\} \ ,\quad 
\peso{for} j\in\IN{p-1}\  , \\&\\
K_p& =\{s_{p-1}+1 \coma \ldots \coma  s_p\} \ , \ J_p=\{s_{p-1}+1\coma \ldots\coma  k\} \ .
\earr 
\eeq
In terms of these indexes we also get that: 
\beq\label{eq el la de F}
\la(S_\cF)=\big(\, c_1 \, \uno_{s_1} \coma 
\dots \coma c_p\, \uno_{s_p-s_{p-1}} \coma \la_{s_p+1} \coma 
\dots \coma \la_d\,\big)\da\in(\R_{>0}^d)\da\peso{if} s_p<d\,
\eeq or 
\beq\label{eq el la de F2}
\la(S_\cF)=\big(\, c_1 \, \uno_{s_1} \coma 
\dots \coma c_p\, \uno_{s_p-s_{p-1}} \,\big)\da\in(\R_{>0}^d)\da\peso{if} s_p=d\,
\eeq 
In what follows, we describe an algorithm that computes both the constants $c_1>\ldots>c_p$ as well as the indexes $s_1<\ldots<s_p$ in terms of the index $s_{p-1}$. \EOE
\end{rem}
\pausa
In order to show the role of the index $s_{p-1}$ as described in Remark \ref{rem caso k mayor d} above, we consider the following
\begin{fed} \label{s feas}\rm 
Let $\la=(\la_i)_{i\in\I_d}\in(\R_{\geq 0}^d)\ua$ and $\ca=(a_i)_{i\in\I_k}\in (\R_{>0}^k)\da$, with $k\geq d$.
\ben
\item Given $0\leq s\leq d-1$ denote by 
$$\la ^s = (\la_{s+1} \coma \dots \coma 
\la_d )\in \R^{d-s}  \py \ca^s = (a_{s+1}\coma \dots \coma a_k) 
\in \R^{k-s}\ ,
$$
the truncations of the vectors $\la$ and $\ca$. 
\item We say that the index $s$ is feasible (for the pair $(\la\coma\ca)$) if 
$(\la^s\coma \ca^s)$ is a feasible pair (see Definition \ref{feasibilidad}) i.e. if $\ca^s\prec \nu(\la^s\coma \ca^s)-\la^s$. 
\EOE 
\een
\end{fed}
\pausa
Notice that, with the notations and terminology from Definition \ref{s feas} above, the pair $(\la\coma\ca)$ is feasible (according to Definition 
\ref{feasibilidad}) if and only if the index $s=0$ is feasible (according to Definition \ref{s feas}).

\pausa
In the following statements we shall use the Notations \ref{not gen min loc} and \ref{notas 2}. Recall  that we have fixed a 
map $\varphi\in\LLM$ and a sequence 
$\cG_0=\{g_i\}_{i\in\I_k}\in \tcal $ which is 
a { local minimum} of the potential $\Psi_\varphi$ in $\tcal $, 
among several other specific notations.
\begin{pro}\label{pro sp-1 es feas} 
Consider the Notations \ref{not gen min loc} and \ref{notas 2} and assume that $k\geq d$.
Let $s_0=0<s_1<\ldots<s_{p-1}<s_p\leq d$ be as in Remark \ref{rem caso k mayor d}. Then:
\begin{enumerate}
\item The index $s_{p-1}\geq 0$ is feasible;
\item The constant $c_p$ and the index $s_p$ are determined by: 
$$c_p=(d-s_{p-1})^{-1}\,(\tr\la^{s_{p-1}}+ \tr\ca^{s_{p-1}}) \py s_p=d \peso{if} (d-s_{p-1})^{-1}\,(\tr\la^{s_{p-1}}+ \tr\ca^{s_{p-1}})\geq \la_d $$ or otherwise, if $(d-s_{p-1})^{-1}\,(\tr\la^{s_{p-1}}+ \tr\ca^{s_{p-1}})<\la_d$ by the identity 
\beq\label{el sp}
\nu(\la^{s_{p-1}}\coma\ca^{s_{p-1}})= \big(\,c_p\, \uno_{s_p-s_{p-1}} \coma \la_{s_p+1} \coma 
\dots \coma \la_d\,\big) \py s_p<d \ .
\eeq
\item If we let $\cG_0^{(p-1)}=\{g_i\}_{i=s_{p-1}+1}^k$ then $\la(S_{\cG_0^{(p-1)}})=((\mu_i)_{i=s_{p-1}+1}^{s_p},0_{d-(s_p-s_{p-1}}))\in(\R^d_{\geq 0})\da$; hence $$(a_i)_{i=s_{p-1}+1}^{k}\prec (\mu_i)_{i=s_{p-1}+1}^{s_p}\,.$$
\end{enumerate}
\end{pro}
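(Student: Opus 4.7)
My plan is to apply the reduction procedure of Remark \ref{induc}(a) with $j=p-1$, which collapses the problem to a single-eigenvalue situation handled by Corollary \ref{lema caso particular}. Set $\cH_{p-1}=\bigl[\bigoplus_{i\le p-1} W_i\bigr]^\perp$, which reduces both $S_{\cF_0}$ and $S_{\cG_0}$, and fix some $\cF_0^{(p-1)}$ in $\cH_{p-1}$ with $S_{\cF_0^{(p-1)}}=S_{\cF_0}|_{\cH_{p-1}}$. By Remark \ref{induc}(a), the subfamily $\cG_0^{(p-1)}=\{g_i\}_{i=s_{p-1}+1}^k$ is a local minimizer of the reduced potential on $\T_{\cH_{p-1}}(\ca^{s_{p-1}})$, and since $k\ge d$ one has $|L_{p-1}|=k-s_{p-1}\ge d-s_{p-1}=\dim\cH_{p-1}$, so the hypothesis of Corollary \ref{lema caso particular} will be available in the reduced problem once I verify that its analogue of $p$ equals $1$.

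The key observation is that the reduced problem automatically satisfies $p'=1$. Indeed, by Eq.~\eqref{cajas} one has $R(S_{\cG_0^{(p-1)}})=\gen\{g_i:i\in J_p\}=W_p$, and since $S_\cF\,g_i=c_p\,g_i$ for every $i\in J_p$, the restriction $(S_{\cF_0^{(p-1)}}+S_{\cG_0^{(p-1)}})|_{W_p}=S_\cF|_{W_p}=c_p\,I_{W_p}$. Corollary \ref{lema caso particular} then applies to the reduced data $(\la^{s_{p-1}},\ca^{s_{p-1}})$ and yields the feasibility of that pair, which by Definition \ref{s feas} is exactly item 1; it also produces the spectral identities $\la^\uparrow\bigl(S_\cF|_{\cH_{p-1}}\bigr)=\nu(\la^{s_{p-1}},\ca^{s_{p-1}})$ and $\la(S_{\cG_0^{(p-1)}})=\mu(\la^{s_{p-1}},\ca^{s_{p-1}})$.

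Item 2 will follow from the first identity via the dichotomy in Remark \ref{la pinta del nuel} applied to $\la^{s_{p-1}}$ and $t=\tr\ca^{s_{p-1}}$: if $m:=(d-s_{p-1})^{-1}(\tr\la^{s_{p-1}}+t)\ge\la_d$ then $\nu(\la^{s_{p-1}},\ca^{s_{p-1}})=m\,\uno_{d-s_{p-1}}$, forcing $s_p=d$ and $c_p=m$; otherwise the step structure of Remark \ref{la pinta del nuel} forces $s_p<d$ and determines $c_p$ and $s_p$ through Eq.~\eqref{el sp}. Item 3 is then immediate from $\la(S_{\cG_0^{(p-1)}})=\nu(\la^{s_{p-1}},\ca^{s_{p-1}})-\la^{s_{p-1}}$, whose entries by item 2 read $((\mu_i)_{i=s_{p-1}+1}^{s_p},0_{d-(s_p-s_{p-1})})$, and the concluding majorization $(a_i)_{i=s_{p-1}+1}^k\prec(\mu_i)_{i=s_{p-1}+1}^{s_p}$ follows from the feasibility $\ca^{s_{p-1}}\prec\mu(\la^{s_{p-1}},\ca^{s_{p-1}})$ after discarding the trailing zeros (which do not affect partial sums in non-increasing order). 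I expect the only delicate step to be the reduction itself, specifically the identification $R(S_{\cG_0^{(p-1)}})=W_p$ together with the action of $S_\cF$ as $c_p\,I$ there, which is what secures $p'=1$ in the reduced problem; once that is in place, the rest is essentially a translation through Corollary \ref{lema caso particular} and Remark \ref{la pinta del nuel}.
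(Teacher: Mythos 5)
Your proposal is correct and follows essentially the same route as the paper: reduce via Remark \ref{induc}(a) with $j=p-1$, note that $R(S_{\cG_0^{(p-1)}})=W_p$ with $S_\cF$ acting as $c_p\,I$ there so the reduced problem has $p'=1$, and then invoke Corollary \ref{lema caso particular} together with Remark \ref{la pinta del nuel} and the feasibility majorization to obtain items 1--3. Your explicit check that $k-s_{p-1}\ge d-s_{p-1}=\dim\cH_{p-1}$ is a detail the paper leaves implicit, but otherwise the arguments coincide.
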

\begin{proof}
By Remark \ref{induc} (item $a$) with $j=p-1$) the family $\cG_0^{(p-1)}=\{g_i\}_{i=s_{p-1}+1}^k$ is a local minimum of 
the map
(set $k'=k-s_{p-1}\geq 1$)
$$ \{\cK=\{k_i\}_{i\in \I_{k'}}\in (\cH_{p-1})^{k'}\ , \ \|k_i\|^2=a_{s_{p-1}+i} \, , i\in  \I_{k'}\}\ni \cK \mapsto \text{P}_\varphi(\cF_0^{(p-1)},\cK)$$
where, using the notations from Remark \ref{induc}, $\cH_{p-1} = \big[\, \bigoplus_{i\le p-1} W_i \big]\orto$ and 
$\cF_0^{(p-1)}$ is a sequence in $\cH_{p-1}$ such that $S_{\cF_0^{p-1}}=S_{\cF_0}|_{\cH_{p-1}}$. 
Moreover, by construction of the subspace $\cH_{p-1}$ we see that if we let $\cF^{(p-1)}=(\cF_0^{(p-1)} , \cG_0^{(p-1)})\in \cC_{\ca^{s_{p-1}}}(\cF_0^{(p-1)})$
then $W_p=R(S_{\cG_0^{(p-1)}})$ and $$ S_{\cF^{(p-1)}}\ P_{W_p}=c_p\ P_{W_p}\,.$$
Therefore, by Corollary \ref{lema caso particular}, we see that the pair $(\la^s\coma\ca^s)$ is feasible. The other claims follow from Remark \ref{la pinta del nuel}, Corollary \ref{lema caso particular} and Proposition \ref{frame mayo}.  
\end{proof}

\begin{rem}\label{form sp} 
Observe that, under the assumptions of Proposition \ref {pro sp-1 es feas} then item 2 implies that  
\beq
s_p=\max\{ j\in\I_d:\ \la_j<c_p\}\in\I_d\,. \EOEP
\eeq
\end{rem}

\begin{notas} \rm
Consider the Notations \ref{not gen min loc} and \ref{notas 2}, and assume that $k\geq d$.
\ben
\item We let $h_i:=\la_{i}+a_i$ for every $i\in\I_d$. 
\item Given  $j\leq r \leq d$, let 
\[
P_{j\coma r} =\frac{1}{r-j+1}\ \sum_{i=j}^r\  h_i = 
\frac{1}{r-j+1}\ \sum_{i=j}^r\ \la_{i}+a_i 
\,.
\] 
 We abbreviate $P_{1 \coma r} = P_r\,$ for the initial averages. \EOE  
\een\end{notas}

\pausa
The following result will allow us to obtain several relations between the indexes and constants describing $\la(S_\cF)$ as in Remark \ref{rem caso k mayor d}. We point out that the ideas behind its proof are derived from \cite{mrs3}.

\begin{lem}\label{lema agregado l1}
Consider the Notations \ref{not gen min loc} and \ref{notas 2} and assume that $k\geq d$, $p>1$. With the notations in Remark \ref{rem caso k mayor d} we have that 
\begin{enumerate} 
\item If $1\leq r\leq d$ then 
$$ (a_j)_{j\in\I_r}\prec(P_r-\la_j)_{j\in\I_r} \iff P_r\geq P_i\ , \ \ i\in\I_r \iff P_r=\max\{P_i:\ i\in\I_r\}\,.$$
\item $c_1=P_{s_1}=\max\{P_j: \ j\leq s_{p-1}\}$. Moreover, if $s_1< t\le s_{p-1}
 \implies  P_t< c_1 \,$.
\end{enumerate}
\end{lem}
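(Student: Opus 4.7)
For item 1, the idea is to reduce the majorization to a chain of averaged inequalities. Observe first that the trace condition is automatic: $\sum_{j=1}^r(P_r-\la_j)=rP_r-\sum_{j=1}^r\la_j=\sum_{j=1}^r a_j$ by the very definition of $P_r$. Since $\ca$ is non-increasing and $\la$ is non-decreasing, the vector $(P_r-\la_j)_{j\in\I_r}$ is already in non-increasing order. Hence the majorization $(a_j)_{j\in\I_r}\prec(P_r-\la_j)_{j\in\I_r}$ is equivalent to the partial sum inequalities $\sum_{j=1}^i a_j\le\sum_{j=1}^i(P_r-\la_j)$ for every $i\in\I_r$, which upon rearranging become $\sum_{j=1}^i h_j\le iP_r$, i.e.\ $P_i\le P_r$. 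This immediately yields the three equivalent conditions.

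For item 2, the first step is to identify $c_1$ with $P_{s_1}$. By Remark \ref{rem caso k mayor d} (using Propositions \ref{los J ordenados} and \ref{los la}), $K_1=J_1=\{1,\dots,s_1\}$, the subspace $W_1=\mathrm{span}\{g_i:i\in J_1\}$ has dimension $s_1$ and reduces both $S_{\cF_0}$ and $S_{\cG_0}$, and $S_\cF\restriction_{W_1}=c_1\,I_{W_1}$. Taking traces and using $\la(S_{\cF_0}\restriction_{W_1})=(\la_1,\dots,\la_{s_1})$ gives $c_1\,s_1=\sum_{j=1}^{s_1}(\la_j+a_j)=\sum_{j=1}^{s_1}h_j$, so $c_1=P_{s_1}$. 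Moreover $\{g_j\}_{j\in J_1}$ is a frame for $W_1$ with norms $(a_j)_{j=1}^{s_1}$ and frame operator with eigenvalues $(\mu_j)_{j=1}^{s_1}=(c_1-\la_j)_{j=1}^{s_1}$. Proposition \ref{frame mayo} then yields $(a_j)_{j\in\I_{s_1}}\prec(c_1-\la_j)_{j\in\I_{s_1}}=(P_{s_1}-\la_j)_{j\in\I_{s_1}}$, and by item 1 this is equivalent to $P_{s_1}\ge P_j$ for every $j\le s_1$.

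The remaining point is $P_t<c_1$ for $s_1<t\le s_{p-1}$. The plan is to iterate the argument above via Remark \ref{induc}(a): for each $q\in\I_{p-1}$, the reduced family $\cG_0^{(q-1)}$ is a local minimizer of the corresponding reduced potential, its $W_q$-block has spectrum $(\la_j)_{j=s_{q-1}+1}^{s_q}$ for $S_{\cF_0}$ and equal value $c_q$ for $S_\cF$, so $c_q\,(s_q-s_{q-1})=\sum_{j=s_{q-1}+1}^{s_q}h_j$, i.e.\ $c_q=P_{s_{q-1}+1,s_q}$; and by the shifted version of item 1 one has $P_{s_{q-1}+1,t}\le c_q$ for every $s_{q-1}<t\le s_q$. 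Given any $t$ with $s_{q-1}<t\le s_q$ and $q\ge2$, splitting the sum for $P_t$ along the blocks yields
\[
t\,P_t=s_1c_1+\sum_{i=2}^{q-1}(s_i-s_{i-1})c_i+(t-s_{q-1})P_{s_{q-1}+1,t}\le s_1c_1+\sum_{i=2}^{q-1}(s_i-s_{i-1})c_i+(t-s_{q-1})c_q<t\,c_1,
\]
where the final strict inequality uses $c_q<c_1$ together with $t-s_{q-1}\ge1$. Thus $P_t<c_1$ for every $t\in(s_1,s_{p-1}]$, which combined with the previous step gives $c_1=P_{s_1}=\max\{P_j:j\le s_{p-1}\}$.

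The only delicate point is the iterated reduction: one must verify carefully that applying Remark \ref{induc}(a) at stage $q-1$ gives a local minimum of the reduced problem to which the same identification (the restricted block frame operator equals $c_q\,I$) and Proposition \ref{frame mayo} apply with the shifted data $(\la^{s_{q-1}},\ca^{s_{q-1}})$. Once this bookkeeping is in place, the computations are routine.
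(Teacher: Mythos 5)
Your proof is correct and follows essentially the same route as the paper: item 1 via partial sums of the two ordered vectors, and item 2 via the trace identity $c_1\,s_1=\sum_{j\le s_1}h_j$, Proposition \ref{frame mayo} applied on the first block, and a blockwise convex-combination estimate giving $P_t<c_1$ for $s_1<t\le s_{p-1}$. The only (harmless) difference is that you justify the block identities $c_q=P_{s_{q-1}+1,\,s_q}$ and $P_{s_{q-1}+1,\,t}\le c_q$ through the reduction argument of Remark \ref{induc}, while the paper obtains the same block majorizations $(a_i)_{i=s_{q-1}+1}^{s_q}\prec(\mu_i)_{i=s_{q-1}+1}^{s_q}$ directly from Propositions \ref{los J ordenados}, \ref{los la} and \ref{frame mayo}.
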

\begin{proof}
1. Since $\la=\la\ua$ and $\ca=\ca\da$ then $(P_r-\la_j)_{j\in\I_r} =(P_r-\la_j)_{j\in\I_r} \da$ and $(a_j)_{j\in\I_r}=(a_j)_{j\in\I_r}\da$. On the other hand, 
$\sum_{j\in\I_r}a_j=\sum_{r\in\I_r} P_r-\la_j$ by definition of $P_r$. Therefore, 
$(a_j)_{j\in\I_r}\prec(P_r-\la_j)_{j\in\I_r} $ if and only if for $k\in\I_r$ 
$$ \sum_{j\in\I_k} a_j\leq \sum_{j\in\I_k}(P_r-\la_j) \iff 
P_k= \frac{1}{k}\sum_{j\in\I_k} a_j+\la_j\leq P_r \,.$$

\pausa 
2. By Propositions \ref{los J ordenados} and \ref{los la} we see that the sequence $\{g_j\}_{j\in\I_{s_1}}$ is such that its frame operator has eigenvalues given by $(\mu_1,\ldots,\mu_{s_1},0,\ldots,0)\in(\R_{\geq 0}^d)\da$ and their norms are given by $\|g_j\|^2=a_j$ for $j\in\I_{s_1}$. By Proposition \ref{frame mayo} we get that $(a_j)_{j\in\I_{s_1}}\prec(\mu_j)_{j\in\I_{s_1}}$. On the other hand, Proposition \ref{los la} implies that $\la_j+\mu_j=c_1$ for $j\in\I_{s_1}$. Then
$$s_1\, c_1=\sum_{j\in\I_{s_1}}\la_j+\mu_j=\sum_{j\in\I_{s_1}}\la_j+a_j \implies c_1=\frac{1}{s_1}\sum_{j\in\I_{s_1}} \la_j+a_j=P_{s_1}\,.$$
Hence  $(a_j)_{j\in\I_{s_1}}\prec(c_1-\la_j)_{j\in\I_{s_1}}=(P_{s_1}-\la_j)_{j\in\I_{s_1}} \implies 
P_{s_1}=\max\{P_j:\ j\in\I_{s_1}\}$. Consider now $s_1<t\leq s_{p-1}$ and let $2\leq r\leq p-1$ be such that $s_{r-1}<t\leq s_r$. Then
\begin{eqnarray*}
 P_t&=&\frac{s_1}{t} \left( \frac{1}{s_1}\sum_{j\in\I_{s_1}} h_j\right)+\frac{t-s_1}{t} \left( \frac{1}{t-s_1} \sum_{j=s_1+1}^t h_j\right)
\\ &=&\frac{s_1}{t}\ c_1 +\frac{t-s_1}{t} \left( \frac{1}{t-s_1}\ ( \ \sum_{\ell=2}^{r-1} c_\ell \ (s_\ell-s_{\ell-1})+ \sum_{\ell=s_{r-1}+1} ^t \la_\ell +a_\ell\ )\right)\,,
\end{eqnarray*} that represents $P_t$ as a convex combination, where we have used the identities $$ \sum_{i=s_{\ell-1}+1}^{s_{\ell}} h_i=\sum_{i=s_{\ell-1}+1}^{s_{\ell}} \la_i+\mu_i= (s_{\ell}-s_{\ell-1})\ c_{\ell} $$ that follow from the majorization relation $(a_i)_{i=s_{\ell-1}+1}^{s_\ell}\prec (\mu_i)_{i=s_{\ell-1}+1}^{s_\ell}$ for $2\leq \ell\leq p-1$, which are a consequence of Propositions \ref{los J ordenados}, \ref{los la} and \ref{frame mayo}; using the relation $(a_i)_{i=s_{r-1}+1}^{s_r}\prec (\mu_i)_{i=s_{r-1}+1}^{s_r}$, together with the fact that the entries of these two vectors are downwards ordered, we conclude that 
$$\frac{1}{t-s_1}\ ( \ \sum_{\ell=2}^{r-1} c_\ell \ (s_\ell-s_{\ell-1})+ \sum_{\ell=s_{r-1}+1} ^t \la_\ell +a_\ell\ )\leq \frac{1}{t-s_1}\ ( \ \sum_{\ell=2}^{r-1} c_\ell \ (s_\ell-s_{\ell-1})+ c_r\ (t-s_{r-1})  \,)<c_1$$ since the expression to the left is a convex combination of $c_2,\ldots,c_r<c_1$. Finally, we can deduce that  $P_t<\frac{s_1}{t}\ c_1+ \frac{t-s_1}{t} \ c_1=c_1\,.$
\end{proof}

\begin{pro}\label{prop los cs y ss de min loc} \rm
Consider the Notations \ref{not gen min loc} and \ref{notas 2}, and assume that $k\geq d$. Let $p$, $s_0=0<s_1<\ldots<s_{p-1}<s_p\leq d$ and $c_1>\ldots>c_p$ be as in Remark \ref{rem caso k mayor d}, and assume that $p>1$. If we let $\cF=(\cF_0,\,\cG_0)$ then, we have the following relations between these indexes and constants:
\ben
\item The index $s_1 = \max \, \big\{j \le s_{p-1} \, :\, 
P_{1\coma j} = \max\limits_{i\le s_{p-1}}  \, P_{1\coma i} \, \big\}$, and 
$c_1 = P_{1\coma s_1}\,$.
\item If $s_j<s_{p-1}\,$, then
$$
s_{j+1} = \max \, \big\{s_j< r \le s_{p-1} \, :\, 
P_{s_j+1\coma j} = \max\limits_{s_j< i\le s_{p-1}}  \, P_{s_j+1 \coma i} \, \big\} 
\py c_{j+1} = P_{s_j+1\coma s_{j+1}}\ .
$$
\item $s_{p-1}$ is a feasible index and $c_p$ and $s_p$ are determined by (Definition \ref{s feas})
$$c_p=(d-s_{p-1})^{-1}\,(\tr\la^{s_{p-1}}+ \tr\ca^{s_{p-1}}) \py s_p=d \peso{if} (d-s_{p-1})^{-1}\,(\tr\la^{s_{p-1}}+ \tr\ca^{s_{p-1}})\geq \la_d $$ or otherwise, if $(d-s_{p-1})^{-1}\,(\tr\la^{s_{p-1}}+ \tr\ca^{s_{p-1}})<\la_d$ by the identity 
\beq\label{el sp2}
\nu(\la^{s_{p-1}},\ca^{s_{p-1}})= \big(\,c_p\, \uno_{s_p-s_{p-1}} \coma \la_{s_p+1} \coma 
\dots \coma \la_d\,\big) \py s_p<d \ .
\eeq
 Moreover, the following inequalities hold:
\beq\label{eq desi 25}
c_p\geq \frac{1}{\ell-s_{p-1}}\  \sum_{i=s_{p-1}+1}^\ell h_i 
\peso{for} s_{p-1}+1\leq \ell\leq s_p\ . 
\eeq
\een
\end{pro}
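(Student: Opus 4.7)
The plan is to derive all three items from the previously established Lemma \ref{lema agregado l1}, Proposition \ref{pro sp-1 es feas}, and the reduction argument in Remark \ref{induc}; no new dynamical perturbation is needed here. The main point is that the structural description of $\cG_0$ already given (Propositions \ref{los J ordenados} and \ref{los la} together with the feasibility of the tail) fixes the eigenvalue block structure, and the characterization of $c_1,s_1$ via partial averages $P_{1,j}$ lifts inductively to each subsequent block.

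For item 1, I would read off everything from Lemma \ref{lema agregado l1}. Part 2 of that lemma already gives $c_1 = P_{1,s_1}$ and $P_t < c_1$ for $s_1 < t \le s_{p-1}$, while its part 1 (applied with $r = s_1$, using the majorization $(a_j)_{j\in\I_{s_1}} \prec (c_1-\la_j)_{j\in\I_{s_1}}$ that is forced by $\mu_j = c_1-\la_j$ on $K_1$ and Proposition \ref{frame mayo}) gives $P_{1,s_1} = \max_{i\le s_1} P_{1,i}$. Combining the two yields the strict extremal description of $s_1$ claimed in item 1.

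For item 2, I would invoke Remark \ref{induc}$(a)$ with $j$ in place of $p-1$: the tail $\cG_0^{(j)} = \{g_i\}_{i\in L_j}$ is still a local minimizer of the reduced potential $\Psi^{j}_{\varphi,\cF_0^{(j)}}$ on $\T_{\cH_j}(\ca^{L_j})$, and in the reduced problem the role of $c_1,s_1$ is played by $c_{j+1}$ and $s_{j+1}-s_j$. Since the eigenvalues of $S_{\cF_0^{(j)}}$ on $\cH_j$ are precisely $(\la_i)_{i>s_j}$ and the prescribed norms are $(a_i)_{i>s_j}$, the partial averages of the reduced problem coincide with $P_{s_j+1,\ell}$ in the original indexing. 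Applying item 1 to the reduced problem then yields the formula for $s_{j+1}$ and the identity $c_{j+1} = P_{s_j+1,s_{j+1}}$.

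For item 3, the feasibility of $s_{p-1}$ and the explicit determination of $c_p$ and $s_p$ are precisely the content of Proposition \ref{pro sp-1 es feas} (items 1 and 2), combined with Remark \ref{form sp}. The remaining inequalities \eqref{eq desi 25} follow by a direct calculation: by Proposition \ref{pro sp-1 es feas} item 3 we have the majorization $(a_i)_{i=s_{p-1}+1}^{k}\prec (\mu_i)_{i=s_{p-1}+1}^{s_p}$, which gives
\[
\sum_{i=s_{p-1}+1}^{\ell} a_i \;\le\; \sum_{i=s_{p-1}+1}^{\ell} \mu_i
\quad\text{for every } s_{p-1}+1\le \ell \le s_p.
\]
Since Proposition \ref{los la} forces $\la_i+\mu_i = c_p$ on $K_p = \{s_{p-1}+1,\ldots,s_p\}$, the right hand side equals $(\ell-s_{p-1})c_p - \sum_{i=s_{p-1}+1}^{\ell}\la_i$, and transposing $\sum\la_i$ to the left yields $\sum_{i=s_{p-1}+1}^{\ell} h_i \le (\ell-s_{p-1})c_p$, which is exactly \eqref{eq desi 25}. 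The only mildly delicate point is making sure the reduction in item 2 preserves local-minimality (it does, essentially because the distance used in $\tcal$ restricts to the obvious distance in $\T_{\cH_j}(\ca^{L_j})$), but this is already handled in Remark \ref{induc}$(a)$, so no real obstacle remains.
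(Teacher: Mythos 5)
Your proposal is correct and follows essentially the same route as the paper: item 1 is read off from Lemma \ref{lema agregado l1}, item 2 from that lemma applied to the reduced families $\cG_0^{(j)}$ via Remark \ref{induc} (using Propositions \ref{los J ordenados} and \ref{los la} to identify the reduced data with the tails $(\la_i)_{i>s_j}$ and $(a_i)_{i>s_j}$), and item 3 from Proposition \ref{pro sp-1 es feas} together with the partial-sum consequence of $(a_i)_{i=s_{p-1}+1}^k\prec(\mu_i)_{i=s_{p-1}+1}^{s_p}$ and the identity $\la_i+\mu_i=c_p$ on $K_p$. No gaps.
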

\begin{proof}
Item 1 is contained in Lemma \ref{lema agregado l1}. Item 2 above also follows from Lemma \ref{lema agregado l1} applied to the reduced families
$\cG_0^{(j)}$ as defined in Remark \ref{induc}. Notice that, as a consequence of Propositions \ref{los J ordenados} and \ref{los la} then - using the notations from Remark \ref{induc} - we have that, for $1\leq j\leq s_{p-1}$, then $$ I_j=\{i:\ s_j+1\leq i\leq d\} \py  L_j=\{i:\ s_j+1\leq i\leq k\} \,,$$
which imply that $\la^{I_j}=(\la_i)_{i=s_j+1}^d\in(\R_{\geq 0}^{d-s_j})\ua$, $\cG_0^{(j)}=\{g_i\}_{i=s_j+1}^k$ and $\ca^{L_j}=(a_i)_{i=s_j+1}^k\in(\R_{\geq 0}^{k-s_j})\da$.

\pausa
Proposition \ref{pro sp-1 es feas} shows that $s_{p-1}$ is a feasible index and that the constant $c_p$ and the index $s_p$ are determined as described above.
Finally, notice that Proposition \ref{pro sp-1 es feas} shows the majorization relation $(a_i)_{i=s_{p-1}+1}^k\prec (\mu_i)_{i=s_{p-1}+1}^{s_p}$, where $1\leq s_p\leq d\leq k$. Hence, 
$$ 
\sum_{i=s_{p-1}+1}^\ell a_i\leq \sum_{i=s_{p-1}+1}^\ell \mu_i \peso{for every $\ell$ such that} s_{p-1}+1\leq \ell\leq s_p \ . 
$$
Using this inequality and the fact that $\la_i+\mu_i=c_p$ for $s_{p-1}+1\leq i\leq s_p$ we get \eqref{eq desi 25}.
\end{proof}

\pausa
The following are the two main results of \cite{mrs3}. We will need the detailed structure of {\bf global} minima described in both results in order to prove Theorem \ref{teo locs son globs} below.

\begin{teo}[\cite{mrs3}]\label{teo princ jfaa1} \rm 
Let $\cF_0=\{f_i\}_{i\in\I_{n_0}}\in(\C^d)^{n_0}$, let $\la=\la(S_{\cF_0})\ua\in(\R_{\geq 0}^d)\ua$ and let $\ca=(a_i)_{i\in\I_k}\in(\R_{>0}^k)\da$ with $k\geq d$. Define $$ s^*=\min\ \{\ 0\leq s\leq d-1: \ s\text{ is a feasible index for the pair } (\la\coma\ca) \ \}$$ and let $q\in\I_d$, $s_0^*=0<s_1^*<\ldots<s_{q-1}^*=s^*<s_q\leq d$ and $c_1^*<\ldots<c_{q-1}^*<c_q^*$ be computed according to the following recursive algorithm:
\ben
\item The index $s_1^* = \max \, \big\{j \le s^* \, :\, 
P_{1\coma j} = \max\limits_{i\le s^*}  \, P_{1\coma i} \, \big\}$, and 
$c_1^* = P_{1\coma s_1^*}\,$.
\item If the index $s_j^*$ is already computed and $s_j^*<s^*\,$, then
$$
s_{j+1}^* = \max \, \big\{s_j^*< r \le s^* \, :\, 
P_{s_j^*+1\coma j} = \max\limits_{s_j^*< i\le s^*}  \, P_{s_j^*+1 \coma i} \, \big\} 
\py c_{j+1}^* = P_{s_j^*+1\coma s_{j+1}^*}\ .
$$
\item Set $s_{q-1}^*=s^*$, and let $c_q^*$ and $s_{q-1}^*<s_q^*\leq d$ be such that (see Definition \ref{s feas})
$$ 
(c_q^*\,\uno_{s_q^*-s_{q-1}^*} \coma \la_{s_q^*+1} \coma \ldots \coma \la_d)
=\nu(\la^{s^*}\coma\ca^{s^*})\in(\R_{>0}^{d-s^*})\ua\ . 
$$  
\een
Then, there exists $\cF^{\rm op}=(\cF_0,\,\cG^{\rm op})\in\cafo$ with $\la(S_{\cF^{\rm op}})=\nu^{\rm op}(\la\coma\ca)\da$ where 
$$
\nu^{\rm op}(\la\coma\ca) :=(c_1^*\,\uno_{s_1^*} \coma c_2^*\,\uno_{s_2^*-s_1^*} \coma 
\ldots \coma c_{q-1}^*\, \uno_{s^*_{q-1}-s^*_{q-2}} \coma  \nu(\la^{s^*}\coma\ca^{s^*}))\in \R_{>0}^d  \ , 
$$ 
and such that for every $\varphi\in\convfs$ 
\beq\label{el posta} \text{P}_\varphi(\cF_0,\,\cG)\geq \text{P}_\varphi(\cF^{\rm op}) \peso{for every} (\cF_0,\,\cG)\in\cafo\,.
\eeq
Moreover, given $(\cF_0,\,\cG)\in\cafo$, equality holds in Eq. \eqref{el posta} 
$\iff \la(S_{(\cF_0,\,\cG)})=\nu^{\rm op}(\la\coma\ca)\da$.
\QED\end{teo}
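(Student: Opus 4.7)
My plan is to prove the statement in three stages: (i) construct a completion $\cF^{op}$ whose frame operator has spectrum $\nu^{op}(\la,\ca)\da$, (ii) show that for every $(\cF_0,\cG)\in\cafo$ the vector $\nu^{op}(\la,\ca)$ is majorized by $\la(S_{\cF_0}+S_\cG)$, and (iii) extract the equality characterization from strict convexity of $\varphi$. I first note that $s^*$ is well-defined: $s=d-1$ is always a feasible index for $(\la,\ca)$ because the truncated problem becomes one-dimensional, so the minimum defining $s^*$ is taken over a non-empty set.

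For the construction of $\cF^{op}$, I would fix an ONB $\{v_i\}_{i\in\I_d}$ of eigenvectors of $S_{\cF_0}$ ordered so that $S_{\cF_0}v_i=\la_i v_i$, and build $\cG^{op}$ block-by-block along the algorithmic ladder $0=s_0^*<s_1^*<\cdots<s_{q-1}^*=s^*<s_q^*$. On each internal block $B_j=\{s_{j-1}^*+1,\ldots,s_j^*\}$ with $j\in\I_{q-1}$, the maximality built into the definition of $s_j^*$ together with the averaging identity of Lemma \ref{lema agregado l1} yields the majorization $(a_i)_{i\in B_j}\prec(c_j^*-\la_i)_{i\in B_j}$. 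Applying Proposition \ref{frame mayo} inside the reducing subspace $W_j=\gen\{v_i:i\in B_j\}$ produces $\cG_j=\{g_i\}_{i\in B_j}\subset W_j$ with $\|g_i\|^2=a_i$ and frame operator $\sum_{i\in B_j}(c_j^*-\la_i)\,v_i\otimes v_i$. On the tail subspace $W_q=\gen\{v_i:s^*<i\leq d\}$, feasibility of $s^*$ directly gives $\ca^{s^*}\prec\mu(\la^{s^*},\ca^{s^*})$, so a final invocation of Proposition \ref{frame mayo} yields $\cG_q\subset W_q$ with the prescribed norms and frame operator diagonal in $\{v_i\}_{i>s^*}$ with eigenvalues $\mu(\la^{s^*},\ca^{s^*})$. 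Concatenating, $\cG^{op}=(\cG_1,\ldots,\cG_q)\in\tcal$ and $S_{\cF^{op}}$ has eigenvalues $\nu^{op}(\la,\ca)\da$.

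For optimality, given an arbitrary $(\cF_0,\cG)\in\cafo$, Lidskii's inequality (Theorem \ref{mrs284}) gives $\la+\la(S_\cG)\prec\la(S_{\cF_0}+S_\cG)$, and thus by Theorem \ref{teo intro prelims mayo}(1), $\text{P}_\varphi(\cF_0,\cG)\geq\tr\varphi(\la+\mu)$ where $\mu=\la(S_\cG)$; Proposition \ref{frame mayo} enforces $\ca\prec\mu$. The problem therefore reduces to the spectrum-level claim that for every $\mu\in(\R_{\geq 0}^d)\da$ with $\ca\prec\mu$ one has $\nu^{op}(\la,\ca)\prec(\la+\mu)\da$; applying Theorem \ref{teo intro prelims mayo}(1) once more then gives $\tr\varphi(\la+\mu)\geq\tr\varphi(\nu^{op}(\la,\ca))=\text{P}_\varphi(\cF^{op})$. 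Proving this intrinsic water-filling majorization is the principal obstacle of the proof, and I would attack it by induction on the number $q$ of blocks: the base case $q=1$ is exactly the feasible scenario handled by Theorem \ref{pre teo caso feasible}, and in the inductive step the maximality in the definition of $s_1^*$ combined with $\ca\prec\mu$ controls the top $s_1^*$ partial sums of $(\la+\mu)\da$ from below by $c_1^*s_1^*$, after which the remaining coordinates are treated by the inductive hypothesis applied to the reduced pair $(\la^{s_1^*},\ca^{s_1^*})$.

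For the equality characterization, suppose $\text{P}_\varphi(\cF_0,\cG)=\text{P}_\varphi(\cF^{op})$ with $\varphi\in\convfs$. Then each of the majorization-based inequalities above is forced to be an equality of $\tr\varphi$. Strict convexity of $\varphi$ and Theorem \ref{teo intro prelims mayo}(3) imply $\la(S_{\cF_0}+S_\cG)=(\la+\mu)\da$ and $(\la+\mu)\da=\nu^{op}(\la,\ca)\da$, so $\la(S_{(\cF_0,\cG)})=\nu^{op}(\la,\ca)\da$. The converse is immediate because $\text{P}_\varphi$ depends only on the spectrum of the frame operator.
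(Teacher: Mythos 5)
You are proving a theorem that this paper itself does not prove (it is imported from \cite{mrs3} and stated with an immediate \QED), so your attempt must carry the full weight of that reference, and it does not. The architecture is the right one and matches the known route: reduce via Lidskii's inequality and Proposition \ref{frame mayo} to the purely vector-level claim $\nu^{\rm op}(\la\coma\ca)\prec(\la+\mu)\da$ for every $\mu\in(\R_{\geq0}^d)\da$ with $\ca\prec\mu$ (this is Eq. \eqref{hay mayo mil}), build $\cG^{\rm op}$ block by block with Proposition \ref{frame mayo}, and get the equality case from strict convexity; your reduction and equality steps are fine, as is the observation that $s=d-1$ is always feasible. But the vector-level majorization is the heart of the theorem, and you explicitly defer it with a one-sentence induction that does not go through as described. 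Concretely: (i) for intermediate lengths $m<s_1^*$ the natural estimate $\sum_{i\leq m}(\la+\mu)\da_i\geq\sum_{i\leq m}(\la_i+\mu_i)\geq\sum_{i\leq m}(\la_i+a_i)=m\,P_{1\coma m}$ only yields $m\,P_{1\coma m}$, which is $\leq m\,c_1^*$ by the very maximality defining $c_1^*$ --- the wrong direction for the needed bound $\sum_{i\leq m}\nu^{\rm op}(\la\coma\ca)\da_i=m\,c_1^*$; (ii) the inductive step cannot simply pass to the reduced pair $(\la^{s_1^*}\coma\ca^{s_1^*})$, because the hypothesis $\ca\prec\mu$ does not truncate to tails: from $\sum_{i\leq s_1^*}a_i\leq\sum_{i\leq s_1^*}\mu_i$ and equal totals one gets $\sum_{i>s_1^*}\mu_i\leq\sum_{i>s_1^*}a_i$, so the tail of $\mu$ does not majorize $\ca^{s_1^*}$, and the remaining entries of $(\la+\mu)\da$ are not in any evident correspondence with the reduced completion problem. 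This is exactly the part that occupies the bulk of \cite{mrs3}; it needs a genuine argument, not a plan.

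The construction of $\cF^{\rm op}$ also rests on unverified properties of the algorithm. To apply Proposition \ref{frame mayo} on the block $W_j$ you need the prescribed block operator to be positive semidefinite, i.e. $c_j^*\geq\la_i$ for every $i$ in the block (equivalently $c_j^*\geq\la_{s_j^*}$), and to conclude $\la(S_{\cF^{\rm op}})=\nu^{\rm op}(\la\coma\ca)\da$ you implicitly use the monotonicity of the plateau constants and their relation to the tail vector $\nu(\la^{s^*}\coma\ca^{s^*})$; none of this follows merely from the definition of $s_j^*\coma c_j^*$ as maximizers of averages, and majorization of $(a_i)_{i\in B_j}$ by $(c_j^*-\la_i)_{i\in B_j}$ does not by itself force the entries $c_j^*-\la_i$ to be nonnegative. (Minor points: Lemma \ref{lema agregado l1} is stated under the standing local-minimum hypotheses, so you should isolate its purely arithmetic item 1, in the shifted form you need, before using it in this context.) In short, the proposal is a correct outline of the standard strategy, but both the central majorization \eqref{hay mayo mil} and the well-definedness of the blockwise construction are asserted rather than proved, and the sketched induction as stated would fail at the truncation step.
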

\pausa
Consider the notations and terminology from Theorem \ref{teo princ jfaa1} above, and let $\cF^{\rm op}=(\cF_0,\,\cG^{\rm op})\in\cafo$ be such that 
$\la(S_{\cF^{\rm op}})=\nu^{\rm op}(\la\coma\ca)\da$. If $\varp\in\convfs$ then it follows that $\cG^{\rm op}$ is a global minimum of $\Psi_\varp$ so, in particular, $\cG^{\rm op}$ is a local minimum. Hence, we can apply Proposition \ref{prop los cs y ss de min loc} to $\cG^{\rm op}$ and deduce
some of the information contained in Theorem \ref{teo princ jfaa1} with one notable exception, namely that $s_{q-1}=s^*$ is the minimal feasible index of the pair $(\la\coma\ca)$.

\begin{teo}[\cite{mrs3}]\label{teo princ jfaa2} \rm
Let $\cF_0=\{f_i\}_{i\in\I_{n_0}}\in(\C^d)^{n_0}$, let $\la=\la(S_{\cF_0})\ua\in(\R_{\geq 0}^d)\ua$ and let $\ca=(a_i)_{i\in\I_k}\in(\R_{>0}^k)\da$ with $k<d$. Define $$ \tilde \la=(\la_i)_{i\in\I_k} \in(\R_{\geq 0}^k)\ua \py \tilde\nu=\nu^{\rm op}(\tilde \la\coma\ca)\in (\R_{>0}^k)\da\,,$$ where the second vector is constructed 
according to Theorem \ref{teo princ jfaa1} above, and set 
$$
\nu^{\rm op}(\la\coma\ca):=(\tilde \nu \coma \la_{k+1} \coma \ldots \coma \la_d)\in \R_{\geq 0}^d\ .
$$ 
Then there exists $\cF^{\rm op}=(\cF_0,\,\cG^{\rm op})\in\cafo$ with $\la(S_{\cF^{\rm op}})=\nu^{\rm op}(\la\coma\ca)\da$ 
and such that for every $\varphi\in\convfs$ 
\beq\label{el posta2} \text{P}_\varphi(\cF_0,\,\cG)\geq \text{P}_\varphi(\cF^{\rm op}) \peso{for every} (\cF_0,\,\cG)\in\cafo\,.
\eeq
Moreover, given $(\cF_0,\,\cG)\in\cafo$, equality holds in Eq. \eqref{el posta2} 
$\iff \la(S_{(\cF_0,\,\cG)})=\nu^{\rm op}(\la\coma\ca)\da$.
\QED\end{teo}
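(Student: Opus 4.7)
The plan is to reduce the statement to the case $k\geq d$ already handled by Theorem \ref{teo princ jfaa1}, exploiting the fact that $S_\cG$ has rank at most $k<d$, so any completion affects $S_{\cF_0}$ only on a $k$-dimensional subspace. Intuitively, the cheapest modification in terms of a strictly convex potential should take place on the subspace spanned by the eigenvectors of the $k$ smallest eigenvalues of $S_{\cF_0}$.

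Fix an ONB $\cB=\{v_i\}_{i\in\I_d}$ of $\C^d$ with $S_{\cF_0}\,v_i=\la_i\,v_i$ (recall $\la=\la\ua$), set $V=\gen\{v_i:\ i\in\I_k\}$, $\dim V=k$, and pick any sequence $\tilde\cF_0$ in $V$ whose frame operator equals $S_{\cF_0}|_V$ (so with spectrum $\tilde\la$). Apply Theorem \ref{teo princ jfaa1} inside $V\cong\C^k$ to the pair $(\tilde\cF_0,\ca)$ -- the hypothesis $k\geq k$ is met -- to obtain an optimal completion $\tilde\cF^{\rm op}=(\tilde\cF_0,\cG^{\rm op})$ in $V$ with $\la(S_{\tilde\cF^{\rm op}})=\tilde\nu\da$. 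Viewing $\cG^{\rm op}\in V^k\subset (\C^d)^k$, the completion $\cF^{\rm op}=(\cF_0,\cG^{\rm op})\in\cafo$ has frame operator block-diagonal along $\C^d=V\oplus V^\perp$, with blocks $S_{\tilde\cF^{\rm op}}$ on $V$ and $S_{\cF_0}|_{V^\perp}$ on $V^\perp$, so its spectrum is $(\tilde\nu,\la_{k+1},\ldots,\la_d)\da = \nu^{\rm op}(\la,\ca)\da$.

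For the lower bound, given any $(\cF_0,\cG)\in\cafo$ let $\mu=\la\da(S_\cG)\in\R_{\geq 0}^d$; since $\rk S_\cG\leq k$ we have $\mu_i=0$ for $i>k$. Lidskii's inequality (Theorem \ref{mrs284}) yields $\la\ua(S_{\cF_0})+\mu\prec\la(S_{\cF_0}+S_\cG)$, so Theorem \ref{teo intro prelims mayo}.1 gives
$$ \text{P}_\varphi(\cF_0,\cG)\ \geq\ \sum_{i\in\I_k}\varphi(\la_i+\mu_i)\ +\ \sum_{i=k+1}^d\varphi(\la_i). $$
Proposition \ref{frame mayo} applied to $\cG$ gives $\ca\prec(\mu_i)_{i\in\I_k}$, and Proposition \ref{frame mayo} again, now inside $V$, produces a sequence $\hat\cG$ in $V$ with norms $\ca$ and frame operator $\sum_{i\in\I_k}\mu_i\,v_i\otimes v_i$. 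Then $(\tilde\cF_0,\hat\cG)$ is a completion in $V$ whose frame operator is diagonal in $\{v_i\}_{i\in\I_k}$ with eigenvalues $(\la_i+\mu_i)_{i\in\I_k}$, so the lower-bound part of Theorem \ref{teo princ jfaa1} applied inside $V$ yields $\sum_{i\in\I_k}\varphi(\la_i+\mu_i)\geq \sum_{i\in\I_k}\varphi(\tilde\nu_i)$. Chaining the two inequalities proves Eq. \eqref{el posta2}.

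For the equality characterization, assume equality in Eq. \eqref{el posta2}; then both inequalities above must be equalities. The equality clause of Theorem \ref{teo princ jfaa1} inside $V$, together with strict convexity of $\varphi$, forces $\{\la_i+\mu_i\}_{i\in\I_k}=\{\tilde\nu_i\}_{i\in\I_k}$ as multisets, while equality in Lidskii (Theorem \ref{mrs284}.2) gives $\la(S_\cF)=(\la\ua(S_{\cF_0})+\mu)\da=\nu^{\rm op}(\la,\ca)\da$, as required; the converse is immediate since $\text{P}_\varphi$ depends only on $\la(S_\cF)$. I expect the most delicate point to be the careful pairing of $\mu_i$ with $\la_i$ when constructing $\hat\cG$ via Proposition \ref{frame mayo}, which is what matches the Lidskii lower bound exactly by an actual completion inside $V$ and thereby permits the reduction to Theorem \ref{teo princ jfaa1}.
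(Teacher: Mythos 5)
Your argument is correct. Note, however, that the paper itself gives no proof of this statement: Theorem \ref{teo princ jfaa2} is quoted from \cite{mrs3}, so there is no internal proof to compare against. What you have written is a legitimate self-contained derivation from Theorem \ref{teo princ jfaa1}: you reduce the rectangular case $k<d$ to the square case by working inside $V=\gen\{v_1,\dots,v_k\}$ (the eigenspace data of the $k$ smallest eigenvalues of $S_{\cF_0}$), you get the lower bound by combining Lidskii's inequality with the observation that the Lidskii-paired spectrum $(\la_i+\mu_i)_{i\in\I_k}$ is actually attained by a completion $\hat\cG\subset V$ — this is the step that needs Proposition \ref{frame mayo} twice, and you handle the pairing (largest $\mu_i$ against smallest $\la_i$) correctly — and the equality case follows by forcing equality in both links of the chain. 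This is essentially the same reduction mechanism the paper itself employs elsewhere for local minimizers (Remark \ref{induc}$(b)$ and the final $k<d$ step in the proof of Theorem \ref{teo locs son globs}), so your route is fully consistent with the paper's toolkit. One cosmetic slip: in the equality analysis you attribute the conclusion $\la(S_\cF)=(\la\ua(S_{\cF_0})+\mu)\da$ to Theorem \ref{mrs284}.2, but that item assumes this spectral identity as hypothesis; the correct citation is Theorem \ref{teo intro prelims mayo}.3 (strict convexity plus majorization plus equality of traces forces equality of ordered spectra), exactly as spelled out in Remark \ref{rem Lidskii glob}. With that reference fixed, the proof stands.
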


\begin{rem}
Let $\cF_0=\{f_i\}_{i\in\I_{n_0}}\in(\C^d)^{n_0}$, let $\la=\la(S_{\cF_0})\ua\in(\R_{\geq 0}^d)\ua$ and let $\ca=(a_i)_{i\in\I_k}\in(\R_{>0}^k)\da$. Let
$\nu^{\rm op}(\la\coma\ca)$ be constructed according to Theorems \ref{teo princ jfaa1} or \ref{teo princ jfaa2} depending on the case $k\geq d$ or $k<d$.
The fact that $\nu^{\rm op}(\la\coma\ca)$ is the optimal spectrum for every convex potential $\varphi\in\convfs$ is equivalent to the assertion  that 
\beq\label{hay mayo mil} \nu^{\rm op}(\la\coma\ca)\prec \la(S_{(\cF_0,\,\cG)}) \peso{for every}(\cF_0,\,\cG)\in\cafo\ .
\eeq
See \cite{mrs3} or \cite{FMaP} for an independent proof of this fact. \EOE
\end{rem}

\pausa
The following is our main result:

\begin{teo}\label{teo locs son globs}
Consider the Notations \ref{not gen min loc} and \ref{notas 2}.
Then the local minimizer 
$\cG_0\in\tcal $ is also a  global minimizer of $\Psi_\varphi$ in $\tcal $.
\end{teo}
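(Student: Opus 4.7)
The plan is to identify $\la(S_{\cF_0}+S_{\cG_0})$ with the optimal spectrum $\nu^{\rm op}(\la,\ca)^\da$ from Theorems \ref{teo princ jfaa1}--\ref{teo princ jfaa2}, after which the equality statement in those theorems immediately yields that $\cG_0$ is a global minimizer.

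First I would reduce to the case $k\geq d$. If $k<d$, by Remark \ref{induc}(b) we may replace $\C^d$ by $W=R(S_{\cG_0})$ and $\cF_0$ by a sequence $\tilde\cF_0$ in $W$ with $S_{\tilde\cF_0}=S_{\cF_0}|_W$; then $\cG_0$ is a local minimizer of $\Psi_{\varphi\coma\tilde\cF_0}$ on $\T_W(\ca)$, and $\dim W\le k$. Under this reduction one is in the hypothesis of Theorem \ref{teo princ jfaa1}, and the original case $k<d$ is recovered via Theorem \ref{teo princ jfaa2}, which is built on top of the $k\geq d$ case in the same reduced fashion.

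Once $k\geq d$, Section \ref{estruc interior} has already done most of the work. Indeed, by Theorem \ref{teo estruc 1}, Propositions \ref{los J ordenados}, \ref{los la} and, above all, Proposition \ref{prop los cs y ss de min loc}, the vector $\la(S_\cF)$ has exactly the block-structured form \eqref{eq el la de F}--\eqref{eq el la de F2}; moreover the indexes $s_1<\ldots<s_{p-1}$ and constants $c_1>\ldots>c_{p-1}$ are computed from the truncated averages $P_{j\coma r}$ by the very same recursive rule that appears in items (1)--(2) of Theorem \ref{teo princ jfaa1}, and the last constant $c_p$ together with $s_p$ is given by $\nu(\la^{s_{p-1}}\coma\ca^{s_{p-1}})$ as in item (3) of Theorem \ref{teo princ jfaa1}. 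In both recursions the only piece of data is the starting feasible index: for the local minimizer, $s_{p-1}$ (feasible, by Proposition \ref{pro sp-1 es feas}); for the global minimizer, $s^*$ (the \emph{minimal} feasible index). Thus the whole problem collapses to the identity $s_{p-1}=s^*$, since once this is established the two recursive algorithms produce literally the same vector, i.e.\ $\la(S_\cF)=\nu^{\rm op}(\la\coma\ca)\da$, which by the equality characterization in Theorem \ref{teo princ jfaa1} forces $\cG_0$ to be a global minimizer.

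The main obstacle is therefore showing $s_{p-1}=s^*$. I would proceed by contradiction: assume $s^*<s_{p-1}$. The idea is to run the algorithm of Theorem \ref{teo princ jfaa1} starting from $s^*$ to obtain the optimal vector $\nu^{\rm op}(\la\coma\ca)$, and note that by the global majorization \eqref{hay mayo mil} one has $\nu^{\rm op}(\la\coma\ca)\prec\la(S_\cF)$. If the two vectors were equal as multisets, the two recursions would necessarily match and we would be done; so we may assume the majorization is strict. The plan is then to exploit this strict gap to construct an explicit smooth one-parameter deformation $\cG(t)\in\tcal $ with $\cG(0)=\cG_0$, concentrated on the last block $J_p$ (using Proposition \ref{teo gen 4.6} and the linear dependence there), which redistributes mass between consecutive eigenspaces of $S_\cF$ exactly as in the proof of Proposition \ref{los la}. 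A first-order Weyl/Lidskii analysis of $S_{(\cF_0\coma \cG(t))}$ along this curve, together with the strict convexity of $\varphi$, would then yield $\Psi_\varphi(\cG(t))<\Psi_\varphi(\cG_0)$ for all sufficiently small $t>0$, contradicting the local minimality of $\cG_0$. The technically delicate point is matching the direction of the deformation to the discrepancy between the two algorithms (i.e.\ choosing which indexes in $J_p$ to rotate towards which eigenvectors corresponding to the smaller feasible index $s^*$) in such a way that the perturbation stays inside $\tcal $ and actually produces a strict majorization with respect to $\la(S_\cF)$.
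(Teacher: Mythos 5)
Your framing is right (identify $\la(S_\cF)$ with $\nu^{\rm op}(\la\coma\ca)\da$ and invoke the equality case of Theorems \ref{teo princ jfaa1}--\ref{teo princ jfaa2}), but the proposal has a genuine gap exactly at the step where all the difficulty sits. You reduce everything to the identity $s_{p-1}=s^*$ and propose to prove it by a new one-parameter deformation supported on $J_p$, ``as in the proof of Proposition \ref{los la}''. This is precisely the point the paper flags as \emph{not} obtainable from the local structure results (see the comment right after Theorem \ref{teo princ jfaa1}: Proposition \ref{prop los cs y ss de min loc} recovers the algorithmic structure of a global minimum ``with one notable exception, namely that $s_{q-1}=s^*$ is the minimal feasible index''). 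Your sketch does not actually exhibit the deformation: you acknowledge that ``matching the direction of the deformation to the discrepancy'' is delicate and leave it unresolved. Note that a strict majorization $\nu^{\rm op}(\la\coma\ca)\prec\la(S_\cF)$ does not by itself produce a point of $\tcal$ \emph{near} $\cG_0$ with smaller potential --- that is the whole local-versus-global issue --- and the obvious rotations/redistributions of vectors have already been spent in Propositions \ref{teo gen 4.6} and \ref{los la} to derive the block structure; there is no evident further direction of strict decrease available when $s^*<s_{p-1}$, and the paper never constructs one.

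The paper's actual argument avoids proving $s_{p-1}=s^*$ directly. For $k\geq d$ it inducts on $p$: the case $p=1$ is Corollary \ref{lema caso particular}; for $p>1$ it first shows $s_1=s_1^*$ and $c_1=c_1^*$ by purely comparative means --- the case $q=1$ and the two sub-cases $s_q^*\geq s_1$, $s_q^*<s_1$ are each excluded by combining Lemma \ref{lema agregado l1}, Eq. \eqref{eq desi 25} and trace comparisons against the majorization \eqref{hay mayo mil} --- and only then applies the reduction of Remark \ref{induc}(a) to the tail $\cG_0^{(1)}$, which is a local minimizer of a problem with $p-1$ blocks, so the inductive hypothesis plus the nesting $\nu^{\rm op}(\la\coma\ca)=(c_1^*\,\uno_{s_1^*}\coma\nu^{\rm op}(\la^{s_1}\coma\ca^{s_1}))$ finishes the identification. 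If you want to rescue your plan you would have to either supply the missing perturbation (which amounts to redoing the hard part of the theorem) or switch to this inductive comparison scheme. A further, more minor point: for $k<d$ the paper reduces to the $k$-dimensional subspace $\cH=\text{span}\{v_i:\ i\in\I_k\}$ rather than to $W=R(S_{\cG_0})$, precisely so that $\la(S_{\cF_0}|_\cH)\ua=(\la_i)_{i\in\I_k}$ matches the truncation $\tilde\la$ used to define $\nu^{\rm op}$ in Theorem \ref{teo princ jfaa2}; reducing to $W$ (of dimension $s_\cF$, possibly $<k$) would require an extra argument to recover $\nu^{\rm op}(\la\coma\ca)$ for the original problem.
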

\begin{proof}
We adopt the terminology of Notations \ref{not gen min loc} and \ref{notas 2}.
We first assume that $k\geq d$ and argue by induction on $p\geq 1$ i.e. the number of constants $c_1>\ldots>c_p>0$.

\pausa Indeed, if $p=1$ then Corollary \ref{lema caso particular} shows that $\cG_0$ is a global minimum of $\Psi_\varphi$ in $\tcal $
 and we are done. Hence, assume that $p>1$ and that the inductive hypothesis holds for $p-1$.
By Proposition \ref{pro sp-1 es feas} the index $s_{p-1}$ is feasible and then, 
$$
s_{p-1}\geq s^*=\min\{\ 0\leq s\leq d-1: \ s \ \text{ is a feasible index of the pair } \ (\la\coma\ca) \ \}\ .
$$
Consider now the notations and terminology from Theorem \ref{teo princ jfaa1}, describing the optimal spectra $\nu^{\rm op}(\la\coma\ca)$ (notice that $q\geq 1$). 

\pausa
Assume first that $q=1$. In this case $\nu= \nu^{\rm op}(\la\coma\ca)=(c_1^* \,\uno_{s_1^*}\coma \la_{s_1^*+1} 
\coma \ldots \coma \la_d)\in(\R_{\geq 0}^d)\ua$. In particular the majorization relation 
$\la (S_\cF)\prec\nu$ in Eq. \eqref{hay mayo mil} shows that  
$$
c_1^*=\min\{\nu_j 
:\ j\in\I_d\} \geq \min\{\la_j(S_\cF): \ j\in\I_d\}=c_p\ .$$
Hence, by Remark \ref{form sp} and the fact that $\la\leqp \nu \in (\R_{\geq 0}^d)\ua\,$, we deduce that 
$$ 
s_1^*= \max\{j\in\I_d:\ \la_j < c_1^*\}\geq \max\{j\in\I_d:\ \la_j< c_p\}=s_p >s_1 \ .
$$
Hence, by Proposition \ref {teo princ jfaa1},  
$ c_1^*\geq P_{1\coma j}$ for 
$1\leq j\leq s_1^* \implies 
c_1^*\geq P_{1\coma s_1} \stackrel{\ref{prop los cs y ss de min loc}}{=}c_1\,$. 
Using these facts it is easy 
to check that 
$\tr (\nu
)>\tr(\la(S_\cF))$, which contradicts the majorization relation in 
Eq. \eqref{hay mayo mil}.

\pausa
We now assume that $q>1$. In this case, we have that 
\beq\label{eq def s1}
s_1=\max\{1\leq j\leq s_{p-1}: \ P_{1,j}=\max_{1\leq i\leq s_{p-1}} P_{1,i} \} \py c_1=P_{1,s_1}\eeq
and  
\beq\label{eq def s1*}
s_1^*=\max\{1\leq j\leq s_{q-1}^*=s^*: \ P_{1,j}=\max_{1\leq i\leq s_{q-1}^*} P_{1,i} \} \py c_1^*=P_{1,s_1^*}
\eeq
Assume that $s_1^*\neq s_1$. Using that $s^*\leq s_{p-1}$ we see that $s^*=s_{q-1}^*<s_1$. Since $\nu^{\rm op}(\la\coma\ca)$ corresponds to the spectra of a global minimum which, in particular is a local minimum, we can apply item 3 in Proposition \ref{prop los cs y ss de min loc} (see Eq. \eqref{eq desi 25}) and get:
\beq\label{ecuac1}
c_q^*\geq \frac{1}{\ell-s_{q-1}^*}\  \sum_{i=s_{q-1}^*+1}^\ell h_i \peso{for} s_{q-1}^*+1\leq \ell\leq s_q^*\,.
\eeq
We consider the following two sub-cases:

\pausa
Sub-case $a$: $s_q^*\geq s_1$. In this case, since $s^*=s_{q-1}^*<s_1$ we get that
\beq\label{comb conv cs}
c_1=\frac{1}{s_1}\sum_{i=1}^{s_1} h_i= \frac{s^*}{s_1} \ \left( \frac{1}{s^*} \sum_{i=1}^{s^*}h_i \right) + 
\frac{(s_1-s^*)}{s_1} \ \left( \frac{1}{(s_1-s^*)} \sum_{i=s^*+1}^{s_1}h_i \right) 
\eeq that represents $c_1$ as a convex combination of averages. The first average satisfies (by construction of $c_1$ and $s^*\leq s_{p-1}$)
$$ \frac{1}{s^*} \sum_{i=1}^{s^*}h_i \leq c_1 \ \ \implies \ \ \frac{1}{(s_1-s^*)} \sum_{i=s^*+1}^{s_1}h_i\geq c_1\,$$
since otherwise,  Eq. \eqref{comb conv cs} can not hold. Using the hypothesis $s_q^*\geq s_1>s^*=s_{q-1}^*$, Eq. \eqref{ecuac1} 
and the previous inequality
$$ 
c_q^*\geq \frac{1}{s_1-s_{q-1}^*}\  \sum_{i=s_{q-1}^*+1}^{s_1} h_i  \geq c_1\geq c_1^*\,, 
$$ where we have used Eqs. \eqref{eq def s1} and \eqref{eq def s1*} and the fact that $s_{q-1}^*\leq s_{p-1}$. 
Hence $q=1$ contradicting our assumption $q>1$.
 Therefore, Sub-case $a$ is not possible.

\pausa
Sub-case $b$: $s_q^*<s_1$. Recall that $s_{p-1}\geq s^*=s^*_{q-1}$ which, by Eqs. \eqref{eq def s1} and \eqref{eq def s1*}, implies that $c_1\geq c_1^*$.
Thus, $c_1^*\,s_q^*\leq c_1 \,s_q^*<c_1\,s_1$ and hence,
\begin{eqnarray*}
\tr(\nu^{\rm op}(\la\coma\ca)\,)&=&\sum_{i\in\I_q} c_i^*\ (s_{i}^*-s_{i-1}^*)+\sum_{i=s_q^*+1}^d\la_i\leq c_1 \, s_q^*+\sum_{i=s_q^*+1}^d\la_i
\\ &<& \sum_{i\in\I_p} c_i\ (s_{i}-s_{i-1})+\sum_{i=s_p+1}^d \la_i=\tr(S_\cF)\,.
\end{eqnarray*}This last fact
contradicts the majorization relation in Eq. \eqref{hay mayo mil}. We conclude that Sub-case $b$ is not possible.

\pausa
Therefore, we should have that $s_1^*=s_1$ and hence $c_1=c_1^*$. We prove that $\cF=(\cF_0,\,\cG_0)$ is a global minimum by showing that $\la(S_\cF)=\nu^{\rm op}(\la\coma\ca)$.
Indeed, by applying the reduction argument described in Remark \ref{induc} we deduce, setting $k'=k-s_1$, that 
$\cG_0^{(1)}=\{g_{i+s_1}\}_{i\in I_{k'}}$ is a local minimum of 
the map 
\beq\label{ecuac el mapeus}
\{\cK=(k_i)_{i\in \I_{k'}}\in (\cH_{1})^{k'}\ , \ \|g_i\|^2=a_{s_{1}+i} \, , i\in  \I_{k'}\}\ni \cK \mapsto \text{P}_\varphi(\cF_0^{(1)},\cK)
\eeq
where $\cH_{1}=W_1^\perp$ for $W_1=\text{span}\{g_i\}_{i\in\I_{s_1}}$ and $\cF_0^{(1)}$ is a sequence in $\cH_1$ such that $S_{\cF_0^{(1)}}=S_{\cF_0}|_{\cH_1}$.
In this case, by Corollary \ref{coro son li} and the fact that $p>1$, $d'=\dim \cH_1=d-s_1\leq k-s_1=k'$. Moreover, by construction of $\cH_1$, if we let $\tilde \cF=(\cF_0^{(1)},\, \cG_0^{(1)})$ then 
$$ \la(S_{\cF_0^{(1)}})=(\la_{s_1+i})_{i\in\I_{d'}}\da\py \la(S_{\tilde \cF})=(c_2\,\uno_{s_2-s_1},\ldots,c_p\, \uno_{s_p-s_{p-1}},\la_{s_p+1},\ldots,\la_d)\da\,.$$
Hence, the induction hypothesis applies to $\cG_0^{(1)}$ and we conclude that $\cG_0^{(1)}$ is a global minimizer of the map in Eq. \eqref{ecuac el mapeus}.
Therefore, with the notations of Definition \ref{s feas} and Theorem \ref{teo princ jfaa1}, 
	$$ \la(S_{\tilde \cF})=\nu^{\rm op}(\la^{s_1}\coma\ca^{s_1})\,.$$ 
Now, an inspection of the construction in Theorem \ref{teo princ jfaa1} reveals that 
\beq\label{eq palo y}
\nu^{\rm op}(\la\coma\ca)=(c_1^*\,\uno_{s_1^*}\coma\nu^{\rm op}(\la^{s_1}\coma\ca^{s_1}))\in \R_{\geq 0}^d\,. 
\eeq Indeed, since the notion of feasible index depends on the tail of the sequences of eigenvalues and norms we see that  $$ s^*=s_1^*+\min \{0\leq s\leq d'-1: \ s \ \text{ is a feasible index for the pair } \ (\la^{s_1}\coma\ca^{s_1})\}\,.$$
Eq. \eqref{eq palo y} now follows using that $s_1=s_1^*$, the identity above and the formulas for the indexes $s_i^*$ both for $\nu^{\rm op}(\la\coma\ca)$ and 
$\nu^{\rm op}(\la^{s_1}\coma\ca^{s_1})$ from Theorem \ref{teo princ jfaa1}.
Now, we see that 
\beq\label{eq y a la bolsa}
\la(S_\cF)=(c_1\,\uno_{s_1}\coma\la(S_{\tilde \cF}))=(c_1^*\,\uno_{s_1^*}\coma \nu^{\rm op}(\la^{s_1}\coma\ca^{s_1}))=\nu^{\rm op}(\la\coma\ca)\,.
\eeq
Eq. \eqref{eq y a la bolsa} together with Theorem \ref{teo princ jfaa1} show that $\cF=(\cF_0\coma\cG_0)$ is a global minimizer in this case.

\pausa
Finally, in case $k<d$ we argue as in the second part of Remark \ref{induc}. Using Notations \ref{notas 2} and the fact that $\rk(S_{\cG_0})\leq k$, we see that
$\mu_i=0$ for $k+1\leq i\leq d$ and therefore 
\beq\label{eq sobre el espectro de sf en el caso reducido soleado}
S_{\cG_0}=\sum_{i\in\I_k} \mu_i\ v_i\otimes v_i \ \implies \la(S_{\cF})=(\la_1+\mu_1,\ldots,\la_k+\mu_k,\la_{k+1},\ldots,\la_d)\da
\eeq
 and $W:=R(S_{\cG_0})\subset \cH=\text{span}\{v_i:\ i\in\I_k\}$. Notice that $\cH$ reduces $S_{\cF_0}$; then, we can consider a sequence $\tilde \cF_0$ in $\cH$ such that $S_{\tilde \cF_0}=S_{\cF_0}|_\cH$. 
In this case $\cG_0$ is a local minimizer of the map 
\beq\label{eq caso d mayor k}
\T_{\cH}(\ca)=\{\cG=\{g_i\}_{i\in\I_k}\in \cH^k:\ \|g_i\|^2=a_i\ , \ i\in\I_k\ \}
\ni\cG\mapsto \text{P}_\varphi(\tilde \cF_0,\cG)\,.
 \eeq Since $\dim\cH=k$ then, by the first part of this proof, we conclude that 
$\cG_0$ is a global minimizer of the map in Eq. \eqref{eq caso d mayor k} and that, by Theorem \ref{teo princ jfaa1}, 
\beq\label{eq que despues de un rato se poblo de nubes}
 (\la_1+\mu_1,\ldots,\la_k+\mu_k)\da=\la(S_{(\tilde \cF_0\coma\cG_0)})=\nu^{\rm op}(\tilde \la\coma \ca)\,,
\eeq
where $\tilde \la=\la(S_{\tilde \cF_0})\ua=(\la_i)_{i\in\I_k}$. Finally, notice that by Theorem \ref{teo princ jfaa2} and Eqs. 
\eqref{eq sobre el espectro de sf en el caso reducido soleado}, \eqref{eq que despues de un rato se poblo de nubes} we now conclude that 
\beq\label{eq comieron perdices}
\nu^{\rm op}(\la\coma\ca)=(\nu^{\rm op}(\tilde \la\coma \ca),\la_{k+1},\ldots,\la_d)\da=\la(S_\cF)\,. 
\eeq
Eq. \eqref{eq comieron perdices} together with Theorem \ref{teo princ jfaa2} show that $\cF=(\cF_0\coma\cG_0)$ is a global minimizer in this case.
\end{proof}

\section{An application: generalized frame operator distances in $\tcal $}\label{sec aplicados}

We begin with a brief description of Strawn's work \cite{strawn} on the frame operator distance. Thus, we consider 
a positive semidefinite $d\times d$ complex matrix $S_0\in\matpos$  and $\ca=(a_i)_{i\in\I_k}\in(\R_{>0}^k)\da$ such that $\ca\prec\la(S_0)$.
The frame operator distance (FOD) is defined as the function $\Theta_2=\Theta_{2\coma S_0\coma \ca}:\tcal \rightarrow \R_{\geq0}$ given by 
$$ \Theta_2(\cG)=\|S_0-S_\cG\|_2 \peso{for} \cG\in \tcal \,,$$ where $\|A\|_2^2=\tr(A^*A)$ for $A\in\mat$ denotes the Frobenius norm in $\mat$.
By Proposition \ref{frame mayo}, the relation $\ca\prec \la(S_{0})$ implies that there exists $\cG^{\rm op}\in \tcal $ such that 
$S_{\cG^{\rm op}}=S_{0}$. In this case, $$ \min\,\{\, \Theta_2(\cG):\ \cG\in \tcal  \,\}=0\,.$$ 
In \cite{strawn}, after noticing the minimum value of $\Theta_2$ above, an algorithm based on approximate gradient descent is presented. This algorithm exploits some geometrical aspects of the differential geometry of the manifold $\tcal $ obtained by Strawn in \cite{strawn2} 
(some of which are of a similar nature to those considered in \cite{mr2010}). Based on numerical evidence, the author then poses the following:

\pausa
{\bf Conjecture} (Strawn \cite{strawn}): Let $S_0\in\matpos$, $\ca=(a_i)_{i\in\I_k}\in(\R_{>0}^k)\da$ with $k\geq d$ and assume that $\ca\prec\la(S_0)$. Then local minimizers of $\Theta_2$ in  $\tcal $ are also global minimizers.
\EOE

\pausa
As remarked in \cite{strawn}, proving this conjecture would provide a beneficial theoretical guarantee for the performance of the frame operator distance algorithm based on approximate gradient descent presented in that paper, from a numerical perspective.

\pausa
In what follows we consider a generalized version of the frame operator distance, in terms of unitarily invariant norms (u.i.n) in $\mat$ (see Section \ref{sec2.2}). Moreover, we consider the general case of $S_0\in\matpos$ and $\ca\in (\R^k_{\geq 0})\da$, without assuming that $k\geq d$ nor $\ca\prec\la(S_0)$.

\begin{fed} \label{defi gen fod}\rm Let $S_0\in\matpos$ and let $\ca=(a_i)_{i\in\I_k}\in(\R_{>0}^k)\da$.
Given a u.i.n. $\nui{\cdot}$ in $\mat$ we consider the generalized frame operator distance (G-FOD) function  
$$\Theta_{(\, S_0\coma\ca\coma\,\nui{\cdot}\  )}=\Theta:\tcal \rightarrow \R_{\geq0} \peso{given by} \Theta(\cG)=\nui{S_{0}-S_{\cG}}\,,$$  where $S_\cG\in\matpos$ denotes the frame operator of $\cG\in \tcal $.
\EOE\end{fed}

\pausa
In case $S_0\in\matpos$ and $\ca=(a_i)_{i\in\I_k}\in(\R_{>0}^k)\da$ are arbitrary then it seems that the minimum value of 
$\Theta$ in $\tcal $ has not been computed in the literature, not even for $\|\cdot\|_2$. 
The following result states that there are structural solution to the G-FOD optimization problem in the sense that there are
families $\cG^{\rm op}\in\tcal $ such that for every u.i.n. $\nui{\cdot}$ the minimum value of $\Theta$ in $\tcal $ is 
$\Theta(\cG^{\rm op})$. In particular, this allows us to compute the minimum value of 
$\Theta$ in $\tcal $ for an arbitrary u.i.n. $\nui{\cdot}$ in the general case.

\pausa
In what follows, given $\tilde \la\in(\R_{\geq 0}^d)\ua$ and $\ca=(a_i)_{i\in\I_k}\in(\R_{>0}^k)\da$ we consider 
$\nu^{\rm op}(\tilde \la\coma\ca)$ constructed as in Theorem \ref{teo princ jfaa1} or Theorem \ref{teo princ jfaa2} according to the case $k\geq d$ or $k<d$.

\begin{teo}\label{teo caract min str-prob}
Let $S_0\in\matpos$ and let $\ca=(a_i)_{i\in\I_k}\in(\R_{>0}^k)\da$. Let $\tilde\la=(\tilde \la_i)_{i\in\I_d}\in(\R_{\geq 0}^d)\ua$ be given by
$\tilde \la=\|S_0\|\,\uno_d-\la(S_0)$ and let $\delta=\delta(\la(S_0)\coma\ca)\in\R^d$ be given by $\delta=(\delta_i)_{i\in\I_d}=\|S_0\| \,\uno_d-\nu^{\rm op}(\tilde \la\coma\ca)$.
\begin{enumerate}
\item For every u.i.n. $\nui{\cdot}$ in $\mat$ we have that 
$$ \min\{ \ \Theta(\cG)=\nui{S_0-S_{\cG}}:\ \cG\in \tcal  \ \}=\nui{D_{\delta}}\,. $$
\item If \ $\nui{\cdot}$ is strictly convex and $\cG\in\tcal $ then
$$\nui{S_0-S_{\cG}}=\nui{D_{\delta}} \peso{if and only if} \la(S_0-S_\cG)=\delta\da\,.$$ In this case, there exists $\{v_i\}_{i\in\I_d}$ an ONB for $\C^d$ such that 
\beq\label{eq teo rep opt str}
S_0=\sum_{i\in\I_d}\la_i(S_0)\ v_i\otimes v_i \py S_{\cG}=\sum_{i\in\I_d} (\la_i(S_0)-\delta_i)\ v_i\otimes v_i\,.
\eeq
\end{enumerate}
\end{teo}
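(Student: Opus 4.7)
The strategy is to convert the G-FOD minimization into a frame completion problem via the substitution $S_{\cF_0} := \|S_0\|\, I - S_0 \in \matpos$, which is PSD because $\la_1(S_0) = \|S_0\|$. Fix any finite sequence $\cF_0$ in $\C^d$ realizing this frame operator; then $\la(S_{\cF_0})\ua = \tilde\la$. For every $\cG \in \tcal$, set $\cF := (\cF_0, \cG) \in \cafo$ and observe the crucial identity
\[
S_0 - S_\cG \;=\; (\|S_0\|\, I - S_{\cF_0}) - S_\cG \;=\; \|S_0\|\, I - S_\cF.
\]
Thus the eigenvalues of $S_0 - S_\cG$ are $\|S_0\| - \la_i(S_\cF)$ and its singular values are $(|\,\|S_0\|-\la_i(S_\cF)\,|)\da$. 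This reduces the study of $\Theta$ to that of the spectrum of $S_\cF$ as $\cG$ varies in $\tcal$.

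For item 1, I invoke Theorems \ref{teo princ jfaa1} and \ref{teo princ jfaa2}: there exists $\cG^{\rm op} \in \tcal$ with $\la(S_{\cF_0} + S_{\cG^{\rm op}}) = \nu^{\rm op}(\tilde\la, \ca)\da$, and the universal majorization $\nu^{\rm op}(\tilde\la, \ca) \prec \la(S_\cF)$ holds for every $\cG \in \tcal$. Applying the convex function $f(t) := |\,\|S_0\| - t\,|$ and using the standard majorization fact that $x \prec y$ with $f$ convex imply $(f(x_i))\da \prec_w (f(y_i))\da$, I obtain $s(D_\delta) = (f(\nu^{\rm op}_i))\da \prec_w (f(\la_i(S_\cF)))\da = s(S_0 - S_\cG)$. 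Theorem \ref{teo intro prelims mayo}(4) then yields $\nui{D_\delta} \le \nui{S_0 - S_\cG}$, with equality attained at $\cG^{\rm op}$; this proves item 1.

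For item 2, assume $\nui{\cdot}$ is strictly convex and that $\nui{S_0 - S_\cG} = \nui{D_\delta}$. Strict convexity of the associated symmetric gauge function together with the above submajorization forces equality of sorted singular value vectors, $s(D_\delta) = s(S_0 - S_\cG)$. The decisive next step is to upgrade this to the spectral identity $\la(S_\cF) = \nu^{\rm op}(\tilde\la, \ca)\da$, equivalently $\la(S_0 - S_\cG) = \delta\da$; I plan to achieve this by combining the majorization $\nu^{\rm op} \prec \la(S_\cF)$ with the explicit block structure of $\nu^{\rm op}$ from Theorems \ref{teo princ jfaa1}/\ref{teo princ jfaa2} and applying Theorem \ref{teo intro prelims mayo}(3) to a strictly convex function dominated by $f$. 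Once this spectral identity is established, the representation in Eq.\ \eqref{eq teo rep opt str} follows from Theorem \ref{teo estruc 1} applied to $\cG$ (now automatically a local minimizer of a suitable strictly convex potential): there exists an ONB $\{v_i\}_{i\in\I_d}$ simultaneously diagonalizing $S_{\cF_0}$ (with $\tilde\la$ in $\ua$ order) and $S_\cG$ (with $\mu = \nu^{\rm op} - \tilde\la$ in $\da$ order). Substituting $S_0 = \|S_0\|\, I - S_{\cF_0}$ gives $S_0 = \sum_i \la_i(S_0)\, v_i \otimes v_i$, and the identity $\mu_i = \la_i(S_0) - \delta_i$ yields the desired formula for $S_\cG$.

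The main technical obstacle lies precisely in that upgrade from equality of singular-value vectors to equality of eigenvalue vectors. Since $f(t) = |\,\|S_0\| - t\,|$ is convex but not strictly convex (it is affine on each side of $\|S_0\|$), equality of $(f(\nu^{\rm op}_i))\da$ and $(f(\la_i(S_\cF)))\da$ does not a priori exclude the possibility of reflecting some eigenvalues of $S_\cF$ across $\|S_0\|$ while preserving the absolute deviations. For Schatten $p$-norms with $p > 1$ the obstacle evaporates because $\varphi(t) = |\,\|S_0\| - t\,|^p$ is genuinely strictly convex and Theorems \ref{teo intro prelims mayo}(3) and \ref{teo princ jfaa1} apply at once; the general strictly convex case requires exploiting the componentwise control provided by $\nu^{\rm op} \prec \la(S_\cF)$ (which restricts how far each $\la_i(S_\cF)$ can drift from $\nu^{\rm op}_i$) together with the non-negativity and block-constant structure of $\nu^{\rm op}$ to rule out any such reflections.
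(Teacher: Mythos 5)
Your reduction to the completion problem (setting $S_{\cF_0}=\|S_0\|\,I-S_0$, so that $S_0-S_\cG=\|S_0\|\,I-S_{(\cF_0,\cG)}$), your proof of item 1 via $\nu^{\rm op}(\tilde\la\coma\ca)\prec\la(S_\cF)$, convexity of $t\mapsto|\,\|S_0\|-t\,|$, sub-majorization of singular values and attainment at $\cG^{\rm op}$, and your final appeal to Theorem \ref{teo estruc 1} to get the joint diagonalization \eqref{eq teo rep opt str}, all coincide with the paper's argument. However, in item 2 there is a genuine gap, and it is exactly the one you flag yourself: passing from equality of the sorted singular-value vectors, $|\delta|\ua=s(S_0-S_\cG)\ua$, to the spectral identity $\la(S_0-S_\cG)=\delta\da$ (equivalently $\la(S_\cF)=\nu^{\rm op}(\tilde\la\coma\ca)\da$) is never proved. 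Your proposed routes do not close it: Theorem \ref{teo intro prelims mayo}(3) needs $x\prec y$ together with $\tr\varphi(x)=\tr\varphi(y)$ for a \emph{strictly} convex $\varphi$, but the equality you have is only of the values of the non-strictly convex function $f(t)=|\,\|S_0\|-t\,|$ (as a multiset), and no equality of traces of a strictly convex function is available; likewise, majorization $\nu^{\rm op}\prec\la(S_\cF)$ gives partial-sum inequalities, not the ``componentwise control'' on how far each $\la_i(S_\cF)$ can drift from $\nu_i^{\rm op}$ that your sketch invokes, so the ``reflection across $\|S_0\|$'' scenario is not ruled out by what you wrote.

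The paper closes precisely this step with a separate combinatorial fact, Lemma \ref{lem sobre mayo}: if $a\coma b\in\R^d$ satisfy $b\prec a$ and $|a|\da=|b|\da$, then $a\da=b\da$. It is proved by induction on $d$, by normalizing so that the maximal modulus is attained at a positive top entry of $a$ and analyzing the two cases $a_1=b_1$ and $a_1>b_1$ (in the latter case forcing $a_d=b_d=-a_1$ and reducing dimension). Applied with $b=\delta\prec a=\|S_0\|\,\uno_d-\la(\tilde S_0+S_\cG)$ and the equality of absolute values you obtained from strict convexity of the norm, it yields $\delta\da=(\|S_0\|\,\uno_d-\la(\tilde S_0+S_\cG))\da$, after which your concluding argument (global optimality of $\cG$ for every $\varphi\in\convfs$, then Theorem \ref{teo estruc 1}) goes through as you describe. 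So your proposal needs this lemma (or an equivalent argument) supplied explicitly; as written, item 2 is incomplete. A minor additional caveat: the assertion that strict convexity of the u.i.n.\ plus $s(D_\delta)\prec_w s(S_0-S_\cG)$ and equality of norms forces $s(D_\delta)=s(S_0-S_\cG)$ is used by the paper as well, so no complaint there, but it deserves at least a reference when you write the proof in full.
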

\pausa
We prove Theorem \ref{teo caract min str-prob} below, by means of a translation between the optimization problem for the frame operator distance and the optimization problem for convex potentials of frame completions with prescribed norms. It is worth pointing out that 
a relation between frame operator distances (for the Frobenius norm) and minimization of 
the frame potential of sequences with prescribed norms was already noticed in \cite{strawn}. 

\pausa
We will use the following result for the uniqueness of item 2 in Theorem \ref{teo caract min str-prob} above.
\begin{lem}\label{lem sobre mayo}
Let $a,\, b\in\R^d$ be such that $a\succ b$ and $|a|\da=|b|\da$. Then $a\da=b\da$.
\end{lem}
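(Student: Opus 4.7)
The plan is to invoke the strict-convexity case of the majorization-inequality Theorem \ref{teo intro prelims mayo}(3) using the function $\varphi(t)=t^2$.

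First, I observe the following: since $|a|\da = |b|\da$, the vectors $|a|$ and $|b|$ have the same entries (as multisets), and in particular
\[
\sum_{i\in\I_d} a_i^{\,2} \;=\; \sum_{i\in\I_d} |a_i|^2 \;=\; \sum_{i\in\I_d} |b_i|^2 \;=\; \sum_{i\in\I_d} b_i^{\,2}.
\]
Thus, if we set $\varphi(t)=t^2$ on $I=\R$, then $\tr\varphi(a)=\tr\varphi(b)$.

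On the other hand, the hypothesis $a\succ b$ means exactly that $b\prec a$ (in particular both $a$ and $b$ have the same trace, which is consistent with the equality of squared sums above). Since $\varphi(t)=t^2$ is a strictly convex function on $\R$ that contains all entries of $a$ and $b$, Theorem \ref{teo intro prelims mayo}(3) applies and yields a permutation $\sigma$ of $\I_d$ with $a_i=b_{\sigma(i)}$ for every $i\in\I_d$. Equivalently, $a\da=b\da$, which is the desired conclusion.

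There is essentially no obstacle here: the whole argument is a one-line application of item 3 of Theorem \ref{teo intro prelims mayo} once one notices that the equality $|a|\da=|b|\da$ forces $\tr(a^2)=\tr(b^2)$, turning the general Schur-type inequality $\tr\varphi(b)\le \tr\varphi(a)$ into an equality for the strictly convex test function $\varphi(t)=t^2$.
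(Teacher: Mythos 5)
Your argument is correct, and it is genuinely different from the proof in the paper. You reduce the lemma to the equality case of the majorization--convexity inequality (Theorem \ref{teo intro prelims mayo}, item 3): since $|a|\da=|b|\da$ forces $\sum_{i\in\I_d}a_i^2=\sum_{i\in\I_d}b_i^2$, the strictly convex test function $\varphi(t)=t^2$ on $I=\R$ turns the Schur-type inequality coming from $b\prec a$ into an equality, and the equality case gives that $b$ is a permutation of $a$, i.e.\ $a\da=b\da$. The paper instead gives a self-contained elementary proof by induction on $d$: after normalizing so that $a=a\da$, $b=b\da$ and the entry of maximal modulus of $a$ is $a_1>0$, one analyzes whether $a_1>b_1$ (in which case the maximal modulus must be attained by $b_d=a_d=-a_1$, and one peels off the last coordinate) or $a_1=b_1$ (peeling off the first coordinate), and applies the inductive hypothesis to the truncated vectors. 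Your route is shorter and stays entirely within the toolkit the paper already states in its preliminaries (the equality case there is quoted from the literature with the rest of Theorem \ref{teo intro prelims mayo}), while the paper's induction has the merit of being completely elementary, not invoking the strict-convexity equality case at all. Both establish exactly the statement needed for the uniqueness part of Theorem \ref{teo caract min str-prob}.
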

\begin{proof}
We can assume 
that $a=a\da$ and $b=b\da$.  Also that $a\neq \la \, \uno$ (the case
$a = \la \uno$ is trivial).
We argue by induction on the dimension $d$. If $d=1$ the result is clear.

\pausa
Assume that the result holds for $d-1\geq 1$ and let $a,\, b\in\R^d$ be such that 
 $a\succ b$ and $|a|\da=|b|\da$.
By replacing $a$ by $-a\da $ and $b$ by $-b\da$ 
if necessary, 
we can assume that 
$$
|a_1|\geq |a_d|
\implies \max\limits_{i \in \IN{d}}|a_i| = |a_1| = a_1>0 \ ,
$$
where the fact that $a_1> 0$ (in this case) follows easily  using that $a\da=a \neq \la \, \uno$. 

\pausa
Note that $b\prec a  \implies a_1\geq b_1$. Assume that $a_1>b_1$. Then $a_d = b_d = -a_1\,$. Indeed, $b_d $ must achieve the maximal modulus 
(since $b_1 $ doesn't
), and 
$a_d \le  b_d$ by 
 majorization. 
Let $\tilde a=(a_i)_{i\in\I_{d-1}}$ and $\tilde b=(b_i)_{i\in\I_{d-1}}$. It is easy to see that 
$\tilde a\da =\tilde a\prec \tilde b=\tilde b\da$ 
and $|\tilde a|\da=|\tilde b|\da$. Hence, by inductive hypothesis $\tilde a=\tilde b\implies a_1=b_1\,$, a contradiction. 

\pausa
So we can assume that 
$a_1=b_1\,$.  As before, we can apply the inductive hypothesis and conclude that $(a_{i+1})_{i\in\I_{d-1}}=(b_{i+1})_{i\in\I_{d-1}}$  and hence $a=b$.
\end{proof}

\begin{proof}[Proof of Theorem \ref{teo caract min str-prob}]
Consider $\tilde S_0=\|S_0\|\,I-S_0\in\matpos$ and let $\cF_0=\{f_i\}_{i\in\I_d}$ be a sequence in $\C^d$ such that $S_{\cF_0}=\tilde S_0$.
Notice that if we let $\la(S_0)=(\la_i(S_0))_{i\in\I_d}\in(\R_{\geq 0} ^d)\da$ then
$\la(S_{\cF_0})\ua=(\|S_0\|-\la_i(S_{0}))_{i\in\I_d}=\tilde \la$.
If $\cG\in\tcal $ then 
\beq\label{eq traduc entre probs}
 S_0-S_\cG= \|S_0\|\,I + (S_0-\|S_0\|\,I)-S_\cG= \|S_0\|\,I-( \tilde S_0+S_\cG )\,.
\eeq
In particular, we get that 
\beq\label{eq ident espec}
\la(S_0-S_\cG)\ua=\|S_0\|\,\uno_d-\la(\tilde S_0+S_\cG)\in(\R^d)\ua\,.
\eeq
But notice that, since $\cG\in\tcal $ then 
  $$\tilde S_0+S_\cG= S_{\cF_0}+S_\cG=S_\cF \peso{for} \cF=(\cF_0,\,\cG)\in\cafo\,. $$
Then, by Theorems \ref{teo princ jfaa1} and \ref{teo princ jfaa2} (according to the case $k\geq d$ or $k<d$)
\beq\label{eq para lema}
\nu^{\rm op}(\tilde \la\coma\ca)\prec \la(\tilde S_0+S_\cG)\ \implies \ \delta\stackrel{\rm def}{=}\|S_0\|\,\uno_d-\nu^{\rm op}(\tilde \la\coma\ca)\prec \|S_0\|\,\uno_d-\la(\tilde S_0+S_\cG)\,.
\eeq
 Using Eq. \eqref{eq ident espec} and that the function $\R\ni x\mapsto |x|\in\R_{\geq0}$ is convex we conclude that 
\beq\label{eq submayo val sing}
 |\delta|=(|\delta_i|)_{i\in\I_d}\prec_w | \, \|S_0\|\,\uno_d-\la(\tilde S_0+S_\cG)\,|=s(S_0-S_\cG)\ua
\eeq
where $s(S_0-S_\cG)=|\la(S_0-S_\cG)|\da\in(\R_{\geq 0}^d)\da$ denotes the vector of singular values of $S_0-S_\cG$.
By Theorem \ref{teo intro prelims mayo}, the previous sub-majorization relation implies that for every uin $\nui{\cdot}$ 
$$ \nui{S_0-S_\cG}\geq \nui{D_\delta} \peso{for every} \cG\in \tcal \,.$$
In order to show that this lower bound is attained, consider $\cG^{\rm op}\in \tcal $ an optimal frame completion in $\tcal $
of $\cF_0$ i.e. such that $\la(\tilde S_0+S_{\cG^{\rm op}})=\la(S_{\cF_0}+S_{\cG^{\rm op}})=\nu^{\rm op}(\tilde \la\coma\ca)\da$. Hence, by Eq. \eqref{eq submayo val sing} we see that $$ s(S_0-S_{\cG^{\rm op}})\ua=|\|S_0\|\,\uno_d- \nu^{\rm op}(\tilde \la\coma\ca)\da|\ \implies \ |\delta|\ua=s(S_0-S_{\cG^{\rm op}})\ua\py 
\nui{S_0-S_{\cG^{\rm op}}}=\nui{D_\delta}\,. $$
This last fact shows that the lower bound is attained at $\cG^{\rm op}$ and proves item 1.

\pausa 
Assume further that $\nui{\cdot}$ is strictly convex and let $\cG\in\tcal $ be such that $\nui{S_0-S_{\cG}}=\nui{D_\delta}\,$.
 The sub-majorization relation in Eq. \eqref{eq submayo val sing} together with the previous hypothesis imply that 
$$ |\delta|\ua=| \, \|S_0\|\,\uno_d-\la(\tilde S_0+S_\cG)\,|=s(S_0-S_\cG)\ua\,.$$ The identity above together with the majorization relation in Eq. \eqref{eq para lema} and Lemma \ref{lem sobre mayo} imply that $$ \delta\da=(\|S_0\|\,\uno_d-\nu^{\rm op}(\tilde \la\coma\ca))\da=(\|S_0\|\,\uno_d-\la(\tilde S_0+S_\cG))\da$$ from which it follows that $\la(S_{\cF_0}+S_\cG)=\la(\tilde S_0+S_\cG)=\nu^{\rm op}(\tilde \la\coma\ca)\da$. Therefore, $\cG\in\tcal $ is a global minimizer of $\Psi_\varphi(\cG)=\tr(\varphi(S_{\cF_0}+S_\cG))$ for every $\varphi\in\convfs$. By Theorem \ref{teo estruc 1}   
there exists $\{v_i\}_{i\in\I_d}$ an ONB of $\C^d$ such that 
\beq\label{eq teo rep opt str2}
\tilde S_0=\sum_{i\in\I_d}\tilde \la_i\ v_i\otimes v_i \py S_{\cG}=\sum_{i\in\I_d} (\nu^{\rm op}(\tilde \la\coma\ca)-\tilde \la)_i\ v_i\otimes v_i\,.
\eeq Using that $S_0=\tilde S_0+\|S_0\|\,I$ so that $\tilde \la= \|S_0\|\,\uno_d-\la(S_0)$ we see that Eq. \eqref{eq teo rep opt str} holds for $\{v_i\}_{i\in\I_d}$. 
\end{proof}

\pausa
The following result settles in the affirmative a generalized version of Strawn's conjecture on local minimizers of the FOD for the Frobenius norm in $\mat$, since we do not assume that $k\geq d$ nor the majorization relation $\ca\prec\la(S_0)$ (see the comments at the beginning of this section).
\begin{teo}
Let $S_0\in\matpos$ and let $\ca=(a_i)_{i\in\I_k}\in(\R_{>0}^k)\da$ and consider the FOD given by 
$$
\Theta_2:\tcal \rightarrow \R_{\geq0} \peso{given by} \Theta_2(\cG)=\|S_{0}-S_{\cG}\|_2\,.
$$ Then, the local minimizers of $\Theta_2$ in $\tcal $ are also global minimizers.
\end{teo}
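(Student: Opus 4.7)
The plan is to translate the FOD minimization problem into a strictly convex potential minimization problem over frame completions with prescribed norms, and then quote Theorem \ref{teo locs son globs}. The translation is essentially the one already used in the proof of Theorem \ref{teo caract min str-prob}, with the added observation that the squared Frobenius norm fits exactly into the framework of Definition \ref{pot generales}.

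First, let $\tilde S_0 = \|S_0\|\,I - S_0 \in \matpos$ and choose an auxiliary sequence $\cF_0 = \{f_i\}_{i\in\I_d}$ in $\C^d$ such that $S_{\cF_0} = \tilde S_0$ (e.g., take $\cF_0$ to consist of appropriately scaled eigenvectors of $\tilde S_0$). For $\cG \in \tcal$ and $\cF = (\cF_0,\cG) \in \cafo$ we then have
\[
S_0 - S_\cG \;=\; \|S_0\|\,I - (\tilde S_0 + S_\cG) \;=\; \|S_0\|\,I - S_\cF,
\]
so that, taking traces of squares,
\[
\Theta_2(\cG)^2 \;=\; \|S_0 - S_\cG\|_2^2 \;=\; \sum_{i\in\I_d}\bigl(\|S_0\| - \la_i(S_\cF)\bigr)^{2} \;=\; \tr\bigl(\varphi(S_\cF)\bigr) \;=\; \Psi_{\varphi,\cF_0}(\cG),
\]
where $\varphi:\R_{\geq 0}\to \R_{\geq 0}$ is defined by $\varphi(x) = (\|S_0\| - x)^2$. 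Since $\varphi''\equiv 2 >0$, we have $\varphi \in \convfs$, so $\Psi_{\varphi,\cF_0}$ is a strictly convex potential of the type covered by Theorem \ref{teo locs son globs}.

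Next, suppose that $\cG_0 \in \tcal$ is a local minimizer of $\Theta_2$. Since $\Theta_2$ is non-negative and $t\mapsto t^2$ is strictly increasing on $[0,\infty)$, the map $\cG \mapsto \Theta_2(\cG)^2 = \Psi_{\varphi,\cF_0}(\cG)$ has the same local minimizers on $\tcal$ as $\Theta_2$ itself; in particular, $\cG_0$ is a local minimizer of $\Psi_{\varphi,\cF_0}$ on $\tcal$. By Theorem \ref{teo locs son globs} (applied to this $\varphi$ and this auxiliary $\cF_0$), $\cG_0$ is a global minimizer of $\Psi_{\varphi,\cF_0}$ on $\tcal$. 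Applying the same monotonicity in reverse, $\cG_0$ is a global minimizer of $\Theta_2$ on $\tcal$, which is the desired conclusion.

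I do not foresee a genuine obstacle: the only point that needs care is the verification that $\varphi(x) = (\|S_0\|-x)^2$ genuinely lies in $\convfs$ (strict convexity is immediate, and non-negativity is automatic), and that the reduction to $\Psi_{\varphi,\cF_0}$ preserves the local-minimum property, which follows from the strict monotonicity of the square on $\R_{\geq 0}$. No hypothesis on $k$ versus $d$, nor any majorization assumption $\ca \prec \la(S_0)$, is needed at any step, so this argument automatically yields the generalized version of Strawn's conjecture stated in the theorem.
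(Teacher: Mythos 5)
Your proof is correct and follows essentially the same route as the paper: translate the FOD into a completion problem for an auxiliary $\cF_0$ with $S_{\cF_0}=\|S_0\|\,I-S_0$ and invoke Theorem \ref{teo locs son globs}. The only (cosmetic) difference is that you absorb the shift into the potential, taking $\varphi(x)=(\|S_0\|-x)^2\in\convfs$ so that $\Theta_2^2=\Psi_{\varphi,\cF_0}$ exactly, whereas the paper uses the frame potential $\varphi(x)=x^2$ and observes that $\Theta_2^2-\Psi_\varphi$ is constant on $\tcal$ because $\tr S_\cG=\tr\ca$ there.
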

\begin{proof}
Consider $\tilde S_0=\|S_0\|\,I-S_0\in\matpos$ and let $\cF_0=\{f_i\}_{i\in\I_d}$ be a sequence in $\C^d$ such that $S_{\cF_0}=\tilde S_0$.
Hence $\la(S_{\cF_0})\ua=(\|S_0\|-\la_i(S_{0}))_{i\in\I_d}=\tilde \la$. 
Let $\varphi\in\convfs$ be given by $\varphi(x)=x^2$ for $x\in\R_{\geq 0}$ and  consider 
$\Psi_\varphi:\tcal\rightarrow \R_{\geq 0}$ be given by $\Psi_\varphi(\cG)=\tr(\varphi(S_{(\cF_0\coma \cG)}))=\tr((S_{\cF_0}+S_\cG)^2)$.
If $\cG\in\tcal $ then, using Eq. \eqref{eq traduc entre probs} 
\begin{eqnarray*}
 \Theta_2(\cG)^2&=&\|S_0-S_{\cG}\|_2^2=\tr((\|S_0\|\,I-[S_{\cF_0}+S_\cG])^2)\\ 
&=&\|S_0\|^2\,d - 2\, \|S_0\|\, \tr(S_{\cF_0}+S_\cG) + \tr((S_{\cF_0}+S_\cG)^2)= c+ \Psi_\varphi(\cG)
\end{eqnarray*} where $$c=\|S_0\|\,d+2\,\|S_0\|\,\tr( S_{\cF_0}+S_\cG)=\|S_0\|\,d+2\,\|S_0\|\,(\tr\tilde S_0+\tr \ca)$$ is a constant (since $\cG\in\tcal $).
Hence $\Theta_2(\cG)^2=\Psi_\varphi(\cG)+c$ for every 
$\cG\in\tcal $. In particular, local minimizers of $\Theta_2$ and $\Psi_\varphi$ coincide. The result now follows from these remarks and Theorem \ref{teo locs son globs}.
\end{proof}

\section{Appendix: on a local Lidskii's theorem}\label{appendicity}

\pausa
Let $S\in\matpos$, let $\mu\in(\R_{\geq 0}^d)\da$ and consider $\cO_\mu$  given by 
\beq\label{defi omu App}
\cO_\mu=\{G\in\matpos:\la(G)=\mu\}=\{U^* D_{\mu}\,U:\ U\in\matud\} 
\eeq
We consider the usual metric in $\cO_\mu$ induced by the operator norm; hence $\cO_\mu$ is a metric space.

\pausa
For $\varphi\in\convfs$, let
$\Phi=\Phi_{S,\,\varphi}:\cO_\mu\rightarrow \R_{\geq 0}$ be given by 
\beq\label{defi Phi App}
 \Phi(G)=\tr(\varphi(S+G))=\sum_{j\in\I_d}\varphi(\la_j(S+G))\peso{for} G\in\cO_\mu\,.
\eeq
In what follows, we prove what we call a local Lidskii's theorem (Theorem \ref{teo LLT}) namely 
that local minimizers of $\Phi$ in $\cO_\mu$ are also global minimizers.
\begin{fed}\label{muchas defis mil}
Let $A,\,B\in\matpos$. We consider 
\begin{enumerate}
\item The product manifold $\matud\times \matud$ endowed with the metric $$d((U_1,V_1),(U_2,V_2))=\max\{\|I-U_1^*U_2\|,\,\|I-V_1^*V_2\| \}\,.$$
\item 
$\Gamma=\Gamma_{(A,B)}:\matud\times \matud\rightarrow \matpos_\tau\stackrel{\rm def}{=}\{M\in\matpos:\ \tr(M)=\tau\}$ for $\tau=\tr(A)+\tr(B)$, given by
$$ \Gamma(U,V)=U^* A\,U+V^*B\,V\peso{for} U,\, V\in\matud\,.$$
\item For a given $\varphi\in\convf$ we consider $\Delta_{(A,B)}^\varphi=\Delta:\matud\times \matud\rightarrow \R_{\geq 0}$ given by 
$$\Delta(U,V)=\tr(\varphi(\Gamma(U,V)))\peso{for} U,\, V\in\matud\,.$$
\EOE\end{enumerate}
\end{fed}
\pausa
Our motivation for considering the previous notions comes from the following:
\begin{lem}
\label{lem equiv de los probs}
Let $A=S\in\matpos$ let $\mu\in(\R_{\geq 0}^d)\da$, $B=G_0\in\cO_\mu$ and consider the notations from Definition \ref{muchas defis mil}. Given $\varphi\in\convf$ then
 the following conditions are equivalent:
\begin{enumerate} 
\item $G_0$ is a local minimizer of $\Phi_\varphi$ in $\cO_\mu$;
\item $(I,I)$ is a local minimizer of $\Delta$ on $\matud\times \matud$.
\end{enumerate}
\end{lem}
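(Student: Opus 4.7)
The plan is to reduce both conditions to a single statement about the scalar function $G \mapsto \tr\,\varphi(S + G)$ on the unitary orbit $\cO_\mu$, via a key rewriting of $\Delta$. First I would exploit the unitary invariance of the trace and of $\varphi$: for any unitary $U,V$, setting $W = UV^*$ we have
\[
\tr\,\varphi(U^*SU + V^*G_0 V) \;=\; \tr\,\varphi\bigl(U^*[\,S + (UV^*)G_0(UV^*)^*\,]U\bigr) \;=\; \tr\,\varphi(S + W G_0 W^*).
\]
Hence $\Delta(U,V) = \Phi(W G_0 W^*)$ with $W = UV^*$, so $\Delta$ factors through the orbit map $\pi:\matud \to \cO_\mu$, $\pi(W) = W G_0 W^*$, precomposed with the continuous map $(U,V) \mapsto UV^*$. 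In particular $\Delta(I,I) = \Phi(G_0)$.

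For the implication (1)$\Rightarrow$(2), assume $G_0$ is a local minimum of $\Phi$ on $\cO_\mu$ with radius $\varepsilon > 0$. Since
\[
\|I - UV^*\| \;=\; \|V - U\| \;\le\; \|I-U\| + \|I-V\|,
\]
and $G \mapsto W G_0 W^*$ is continuous at $W = I$, one can choose $\delta > 0$ so that whenever $d((U,V),(I,I)) < \delta$ then $\|W G_0 W^* - G_0\| < \varepsilon$. The identity $\Delta(U,V) = \Phi(WG_0W^*) \ge \Phi(G_0) = \Delta(I,I)$ then gives the desired local minimality of $(I,I)$.

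For the converse (2)$\Rightarrow$(1), the key input is the openness of the orbit map $\pi$, a fact already used in the proof of Theorem \ref{teo estruc 1} via \cite[Thm 4.1]{AS}. Given a neighbourhood $B_{(I,\eta)} = \{U \in \matud : \|I-U\| < \eta\}$ of $I$, openness of $\pi$ ensures that $\pi(B_{(I,\eta)})$ is a neighbourhood of $G_0$ in $\cO_\mu$. So if $(I,I)$ is a local minimum of $\Delta$ with radius $\delta$, I would choose $\eta \le \delta$, take the corresponding open neighbourhood $\cV = \pi(B_{(I,\eta)})$ of $G_0$, and for any $G \in \cV$ write $G = WG_0W^*$ with $\|I-W\| < \eta \le \delta$. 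Setting $(U,V) = (W,I)$ gives $d((U,V),(I,I)) = \|I - W\| < \delta$ and $UV^* = W$, so
\[
\Phi(G) \;=\; \Delta(W,I) \;\ge\; \Delta(I,I) \;=\; \Phi(G_0).
\]
This proves $G_0$ is a local minimum of $\Phi$ on $\cO_\mu$. The only non-trivial ingredient is the openness of $\pi$, which is already available; no convexity or strict convexity of $\varphi$ is actually needed for this equivalence (indeed the lemma is stated for $\varphi \in \convf$).
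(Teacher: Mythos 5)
Your proposal is correct and follows essentially the same route as the paper: the identity $\Delta(U,V)=\Phi(WG_0W^*)$ with $W=UV^*$ (the paper writes it as $U^*SU+V^*G_0V=U^*(S+Z^*G_0Z)U$, $Z=VU^*$), a norm estimate for the implication (1)$\Rightarrow$(2), and the openness of the orbit map $\pi$ from \cite[Thm. 4.1]{AS} for (2)$\Rightarrow$(1). The only difference is cosmetic: you spell out the openness argument that the paper leaves as a one-line citation, and your observation that convexity of $\varphi$ plays no role is consistent with the lemma being stated for $\varphi\in\convf$.
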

\begin{proof}
1.$\implies$2. Consider $(U,W)\in \matud\times \matud$ such that 
$$d((U,W),(I,I))=\max\{\,\|I-U^*\|\,,\,\|I-W^*\| \, \}:=\varepsilon\,.$$
Hence, $$ U^* S\,U+W^* G_0\,W=U^*( S+ Z^* G_0\, Z)\,U \peso{with} Z=WU^*\in\matud\,. $$
Notice that 
$$ \|Z-I\|=\|W \,(U^*-W^*)\|\leq \|U^*-I\|+\|I-W^*\|\leq 2\,\varepsilon\,.$$
Hence, $$\Delta(U,W)=\tr(\varphi(U^*( S+ Z^* G_0\, Z)\,U ))= \Phi(Z^*G_0Z)\peso{with} \| Z^*G_0Z-G_0\|\leq 4\,\varepsilon\|G_0\|\,.$$
2.$\implies$1. This is a consequence of the fact that the map $\matud\ni Z\mapsto 
Z^* G\,Z\in\cO_\mu$ is open 
(see, for example,  \cite[Thm. 4.1]{AS}  or \cite{DF}).
\end{proof}

\pausa
In what follows, given $\mathcal S\subset\matpos$ we consider the commutant of $\mathcal S$, denoted $S'$, that is the unital $^*$-subalgebra of $\mat$ given by 
$$ \mathcal S'=\{\ C\in\mat:\ [C,D]=0\ \text{ for every }\ D\in\mathcal S\ \}\subset \mat\,,$$
where $[C,D]=CD-DC$ denotes the commutator of $C$ and $D$.

\pausa
The following result is standard.
\begin{lem}\label{lema gama sobre}
Consider the notations from Definition \ref{muchas defis mil}. Then 
$$
\Gamma
 \peso{is a submersion at} (I,I) \quad \iff \quad 
\{A,\,B\}'=\C\cdot I \ .
$$
\end{lem}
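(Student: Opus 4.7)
The plan is to compute the differential of $\Gamma$ at $(I,I)$, identify where the image lies, and then apply a standard duality argument. Recall that the tangent space of $\matud$ at $I$ is the space $u(d)=\{X\in\mat : X^*=-X\}$ of skew-Hermitian matrices (via $U(t)=\exp(tX)$). First I would use the curves $U(t)=\exp(tX)$, $V(t)=\exp(tY)$ to compute
\beq\label{eq plan dif}
d\Gamma_{(I,I)}(X,Y)=\frac{d}{dt}\Big|_{t=0}\big[\exp(-tX)\,A\,\exp(tX)+\exp(-tY)\,B\,\exp(tY)\big] =[A,X]+[B,Y]\,.
\eeq
Next I would check that this differential lands in the trace-zero Hermitian matrices $H^0(d)$: for $X\in u(d)$ the commutator $[A,X]$ is Hermitian, since $([A,X])^*=[X^*,A^*]=[-X,A]=[A,X]$, and $\tr[A,X]=0$ always. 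Observing that $\Gamma(I,I)=A+B$ and that the tangent space to $\matpos_\tau$ at this point (or rather to the affine trace-$\tau$ Hermitian slice containing it) is exactly $H^0(d)$, the submersion condition is equivalent to surjectivity of $d\Gamma_{(I,I)}:u(d)\oplus u(d)\to H^0(d)$.

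The key step is then a duality computation with respect to the Hilbert-Schmidt inner product $\langle M,N\rangle=\tr(MN^*)$. For Hermitian $H\in H^0(d)$ and $X\in u(d)$,
\beq
\langle [A,X], H\rangle=\tr([A,X]\,H)=\tr\big(X\,[H,A]\big)=\langle X, -[A,H]\rangle\,,
\eeq
and $[A,H]$ is indeed skew-Hermitian so belongs to $u(d)$. Hence the adjoint of $d\Gamma_{(I,I)}$ is the map $T:H^0(d)\to u(d)\oplus u(d)$ given by $T(H)=(-[A,H],\,-[B,H])$. By standard linear algebra, $d\Gamma_{(I,I)}$ is surjective if and only if $T$ is injective, i.e.
\beq
H\in H^0(d)\coma [A,H]=[B,H]=0 \implies H=0\,,
\eeq
which is to say $\{A,B\}'\cap H^0(d)=\{0\}$.

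Finally I would translate this kernel condition into the commutant condition. If $\{A,B\}'=\C\cdot I$, then any $H\in \{A,B\}'\cap H^0(d)$ is a scalar multiple $tI$ with $0=\tr(H)=t\,d$, forcing $H=0$; so $T$ is injective and $\Gamma$ is a submersion at $(I,I)$. Conversely, if $\{A,B\}'\supsetneq \C\cdot I$, then since $A$ and $B$ are Hermitian the commutant $\{A,B\}'$ is a unital $*$-subalgebra of $\mat$ strictly larger than $\C\cdot I$, and therefore contains a non-scalar Hermitian element $H_0$ (take the real or imaginary part of any non-scalar element). The matrix $H=H_0-\tfrac{\tr(H_0)}{d}I$ remains in $\{A,B\}'$, is Hermitian and trace-zero, and is nonzero since $H_0\notin \C\cdot I$; this produces a nonzero element of $\ker T$, so $\Gamma$ is not a submersion at $(I,I)$. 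I do not anticipate a serious obstacle here: the computation \eqref{eq plan dif} is routine, and the only mild subtlety is the brief argument that a unital $*$-subalgebra of $\mat$ strictly containing $\C\cdot I$ contains a non-scalar selfadjoint element.
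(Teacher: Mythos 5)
Your argument is correct and follows essentially the same route as the paper: compute $d\Gamma_{(I,I)}(X,Y)=[A,X]+[B,Y]$ on skew--Hermitian tangent vectors and use the trace (Hilbert--Schmidt) duality to reduce surjectivity onto the trace-zero Hermitian matrices to the nonexistence of a nonzero trace-zero Hermitian element commuting with $A$ and $B$, which is equivalent to $\{A,B\}'=\C\cdot I$ (the paper phrases the last translation via spectral projections of such a $Y$, you via the Hermitian part minus its normalized trace --- the same content). The only blemish is a harmless sign slip in the adjoint: since $[A,H]$ is skew--Hermitian one gets $\langle [A,X],H\rangle=\langle X,[A,H]\rangle$, so $T(H)=([A,H],[B,H])$ rather than its negative, which of course does not change the kernel condition.
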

\begin{proof}
The (exponential) map $\matsad\ni X\mapsto \exp(X)$ 
allows us to identify the tangent space $\cT_{I}\matud$ with 
$i\cdot\matsad$. Since we consider the product structure on $\matud\times \matud$ we conclude that the differential of $\Gamma$ satisfies
$$D_{(I,I)}\Gamma (X,0)=[A,X] \py D_{(I,I)}\Gamma (0,X)=[B,X] \peso{for} X\in i\cdot \matsad\,. $$ 
Therefore $\Gamma$ is  not a submersion at $(I,I)$ 
if and only if there exists $0\neq Y\in\cT\matpos_\tau$ 
(i.e. $Y\in\matsad$ such that $\tr\, Y=0$)
such that 
\beq\label{orto} 
\tr(Y\,[A,Z])=\tr(Y\,[B,Z])=0 \peso{for every} 
Z\in i\cdot \matsad \ . 
\eeq
Since $\tr(Y\,[A,Z])=\tr([Y,A]\,Z)$ and similarly 
$\tr(Y\,[B,Z])=\tr([Y,B]\,Z)$,  we see that in this case 
$$
[Y,A]=0 =[Y,B]\in i\cdot \matsad \ . 
$$
Moreover, since $Y\neq 0$ and 
$\tr \, Y = 0$, then $Y$ has some non-trivial spectral projection $P$  which also satisfies that $[P,A]=[P,B]=0$. Conversely, in case there exists a non-trivial projection $P$ such that 
$[P,A]=[P,B]=0$, we can construct 
$ Y = \frac{P}{\tr\, P}  - \frac{I-P}{\tr \,(I- P)} $ 
so that $\tr\, Y =0$. Then 
$0\neq Y\in\cT\matpos_\tau$ and it satisfies Eq. \eqref{orto}, so that this matrix $Y$  is orthogonal to the range of the operator $D_{(I,I)}\Gamma$.
\end{proof}

\begin{pro}\label{pro al final commutan nomas}
Consider the notations from Definition \ref{muchas defis mil} and assume that $\varphi\in\convfs$. If $(I,I)$ is a local minimizer of $\Delta$ in $\matud\times \matud$ then $[A,B]=0$.
\end{pro}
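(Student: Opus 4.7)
The plan is to dichotomize using Lemma \ref{lema gama sobre}: either $\{A,B\}'=\C\cdot I$ and $\Gamma$ is a submersion at $(I,I)$, or $\{A,B\}'$ strictly contains $\C\cdot I$, which yields a non-trivial projection commuting with both $A$ and $B$. In the first case I will use the submersion property together with the strict convexity of $\varphi$ to conclude that $A+B$ must be a scalar multiple of the identity, which trivially forces $[A,B]=0$. In the second case the problem splits along a common invariant orthogonal decomposition into two instances of the same problem in strictly smaller dimension, and I will conclude by induction on $d$, the base case $d=1$ being trivial since $\mat$ is commutative.

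For the submersion case, assume $\{A,B\}'=\C\cdot I$. By Lemma \ref{lema gama sobre} and the submersion theorem applied to $\Gamma:\matud\times\matud\to\matpos_\tau$, there is a relative neighborhood $\cW$ of $A+B$ in the affine slice $\matpos_\tau$ such that every $M\in\cW$ is of the form $\Gamma(U,V)$ with $(U,V)$ arbitrarily close to $(I,I)$. Since $(I,I)$ is a local minimum of $\Delta=F\circ\Gamma$ with $F(M):=\tr\,\varphi(M)$, this transfers to $A+B$ being a local minimum of $F$ on $\matpos_\tau$. But $F$ is convex on $\matsad$ and $\matpos_\tau$ is a convex set, so any local minimum of $F$ on $\matpos_\tau$ is automatically a global one. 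By Jensen's inequality applied to the strictly convex $\varphi$, the unique minimum of $\sum_{i\in\I_d}\varphi(\la_i)$ over $\la\in(\R_{\geq 0})^d$ with $\sum_i\la_i=\tau$ is attained at $\la_i=\tau/d$, so $A+B=(\tau/d)\,I$ and hence $B=(\tau/d)\,I-A$ commutes with $A$.

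For the reduction case, suppose $\{A,B\}'\neq\C\cdot I$. Following the proof of Lemma \ref{lema gama sobre}, one extracts a non-trivial orthogonal projection $P$ commuting with both $A$ and $B$ (as a spectral projection of some non-zero traceless $Y\in\{A,B\}'\cap\matsad$). Then $\C^d=R(P)\oplus R(I-P)$ block-diagonalizes $A=A_1\oplus A_2$ and $B=B_1\oplus B_2$. For unitaries $U=U_1\oplus U_2$ and $V=V_1\oplus V_2$ respecting this decomposition,
\[
\Gamma(U,V)=(U_1^*A_1U_1+V_1^*B_1V_1)\oplus(U_2^*A_2U_2+V_2^*B_2V_2),
\]
so $\Delta(U,V)=\Delta_1(U_1,V_1)+\Delta_2(U_2,V_2)$, with each $\Delta_i$ being the analogue of $\Delta$ for the pair $(A_i,B_i)$ on the respective smaller space. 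Restricting the local minimality of $\Delta$ at $(I,I)$ to the subgroup of block-diagonal unitaries shows that $(I_i,I_i)$ is a local minimum of $\Delta_i$ for each $i$. Since $P$ is non-trivial, both $\dim R(P)$ and $\dim R(I-P)$ are strictly less than $d$, so the inductive hypothesis yields $[A_i,B_i]=0$ for $i=1,2$, and hence $[A,B]=0$.

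The main technical point to check is in the submersion case: one must carefully verify that the openness provided by the submersion theorem really transports a neighborhood of $A+B$ in the whole manifold $\matpos_\tau$ into the image of arbitrarily small neighborhoods of $(I,I)$, so that the local minimality of $\Delta$ on $\matud\times\matud$ upgrades to local minimality of $F$ on $\matpos_\tau$. Once this geometric step is in place, the rest is the standard fact that a local minimum of a convex function on a convex set is global, combined with the uniqueness of the global minimum enforced by strict convexity of $\varphi$; in particular no smoothness hypothesis on $\varphi$ beyond strict convexity is needed.
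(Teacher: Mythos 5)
Your proof is correct, but it takes a genuinely different route from the paper in the key step. Both arguments pivot on Lemma \ref{lema gama sobre} and on the same openness/transfer step (a submersion maps small neighborhoods of $(I,I)$ onto relative neighborhoods of $A+B$ in the trace slice); the technical point you flag does check out, since the submersion theorem gives local openness into the affine slice $\{\tr=\tau\}$, and in the case $\{A,B\}'=\C\cdot I$ with $d\geq 2$ the sum $A+B$ is automatically positive definite (any common kernel vector would produce a nontrivial commuting projection), so $A+B$ is an interior point of $\matpos_\tau$ and the identification of the tangent space with traceless Hermitian matrices is legitimate -- the same implicit point underlies the paper's own use of the lemma. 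Where you diverge: the paper assumes $[A,B]\neq 0$, passes to a \emph{minimal} projection $P$ of $\{A,B\}'$ on which the restricted pair is irreducible and noncommuting, invokes Lidskii's inequality together with its equality case (Theorem \ref{mrs284}) to produce a strictly majorized nearby spectrum, and uses the submersion to realize it by unitaries close to the identity, contradicting local minimality. You instead dichotomize: in the irreducible case you transfer local minimality of $\Delta$ to local minimality of the convex functional $M\mapsto\tr\varphi(M)$ on the convex set $\matpos_\tau$, upgrade it to a global minimum, and use strict convexity (Jensen) to force $A+B=(\tau/d)I$, hence $[A,B]=0$ (in fact this shows the irreducible case cannot occur for $d\geq 2$); in the reducible case you split along a commuting projection and induct on dimension. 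Your route avoids Lidskii's inequality and its equality case entirely, trading the paper's single reduction-plus-descent construction for a more elementary convexity argument plus induction; the paper's route is more closely aligned with the majorization machinery used throughout the rest of the article and yields the contradiction in one step without recursion.
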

\begin{proof}
Assume that $[A,B]\neq 0$. Then there exists a minimal projection $P$ of the unital $^*$-subalgebra $\mathcal C=\{A,\,B\}'\subseteq\mat$ such that 
$[PA,PB]\neq 0$.  Indeed, $I\in\mathcal C$ is a projection such that $[IA,IB]\neq 0$. If $I$ is not a minimal projection in $\mathcal C$ then
there exists $P_1,\,P_2\in\mathcal C$ non-zero projections such that $I=P_1+P_2$; hence $[P_iA,P_iB]\neq 0$ for $i=1$ or $i=2$. If the corresponding $P_i$ is not minimal in $\mathcal C$ we can repeat the previous argument (halving) applied to $P_i$. Since we deal with finite dimensional algebras, the previous procedure finds a minimal projection $P\in\mathcal C$ as above. By applying a convenient change of orthonormal basis we can assume that 
$R(P)=\text{span}\{e_i: i\in\I_r\}$, where $r=\rk(P)>1$. Since $P$ reduces both $A$ and $B$ we can consider $A_1=A|_{R(P)}\in\mathcal M_r(\C)^+$ and 
$B_1=B|_{R(P)}\in\mathcal M_r(\C)^+$. By minimality of $P$ we conclude that $\{A_1,\,B_1\}'=\mathcal M_r(\C)$.
Using the case of equality of Lidskii's inequality (see Theorem \ref{mrs284}), we conclude that 
$$ b:=(\la(A_1)\da+\la(B_1)\ua)\da\prec a:=\la(A_1+B_1) \py a\neq b\,.$$
If we let $\sigma=\tr(A_1+B_1)$ then, by Lemma \ref{lema gama sobre} the map 
$$\cU(r)\times \cU(r)\ni(U,V)\mapsto U^*A_1\,U+V^*B_1\,V\in\cM_{r}(\C)^+_\sigma $$
is a submersion at $(I_r,I_r)$. In particular, for every open neighborhood $\mathcal N $ of $(I_r,I_r)$ in $\cU(r)\times \cU(r)$ the set 
$$ \cM:=\{ U^*A_1\,U+V^*B_1\,V: \ (U,V)\in\mathcal N \}$$ contains an open neighborhood of $A_1+B_1$ in $\cM_{r}(\C)^+_\sigma$. 
Consider $\rho:[0,1]\rightarrow (\R_{\geq 0}^{r})\da$ given by $\rho(t)=(1-t)\,a+t\,b$ for $t\in[0,1]$. 
Notice that $\rho(t)\prec a$ and $\rho(t)\neq a$ for $t\in (0,1]$. 
If we let $A_1+B_1=W^*D_{a}\,W$ for $W\in\cU(r)$ then the continuous curve $S(\cdot):[0,1]\rightarrow \cM_{r}(\C)^+_\sigma$ given by 
$S(t)=W^*D_{\rho(t)}\,W$ for $t\in[0,1]$ satisfies that $S(0)=A_1+B_1$, $\la(S(t))\prec a$ and $\la(S(t))\neq a$ for $t\in(0,1]$.
Therefore, there exists $t_0\in(0,1]$ such that $S(t)\in\cM$ for $t\in[0,t_0]$ so, in particular, there exists $(U,V)\in\mathcal N$ such that 
$$ S(t_0)=U^*A_1\,U+V^*B_1\,V  \implies
\Delta(U\oplus P^\perp,V\oplus P^\perp)<\Delta(I_d,I_d)  \ , 
$$
because $\varphi\in\convfs$, where $U\oplus P^\perp,\, V\oplus P^\perp\in\matud$ act as the identity on $R(P)^\perp\subset \C^d$.
Since $\cN$ was an arbitrary neighborhood of $(I_r,I_r)$ we conclude that $(I_d,I_d)$ is not a local minimizer of $\Delta$ in $\matud\times \matud$.
\end{proof}

\begin{teo}[Local Lidskii's theorem]\label{teo LLTApp}
Let $S\in\matpos$ and $\mu=(\mu_i)_{i\in\I_d}\in(\R_{\geq 0}^d)\da$. Assume that 
$\varp \in \convfs$ and that $G_0\in\cO_\mu$ is a local minimizer of $\Phi_{S\coma \varp}$ on $\cO_\mu\,$. 
Then, there exists $\{v_i\}_{i\in\I_d}$ an ONB of $\C^d$ such that, 
if we let $(\la_i)_{i\in\I_d}=\la\ua(S)\in (\R_{\geq 0}^d)\ua$ then 
\beq\label{eq base buenapp}
 S=\sum_{i\in\I_d}\la_i\ v_i\otimes v_i \py G_0=\sum_{i\in\I_d}\mu_i\ v_i\otimes v_i\ .
\eeq
 In particular, $\la(S+G_0)=(\la(S)\ua+\la(G_0)\da)\da$ so $G_0$ is also a global minimizer of $\Phi$ on $\cO_\mu\,$.
\end{teo}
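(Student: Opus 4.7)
The plan is to exploit the scaffolding already built in Definition \ref{muchas defis mil}--Proposition \ref{pro al final commutan nomas} to reduce the problem to the commuting case $[S,G_0]=0$, and then to perform a one-parameter $2\times 2$ rotation argument to pin down the correct pairing of the eigenvalues of $S$ and $G_0\,$.

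\medskip\noindent
First I would apply Lemma \ref{lem equiv de los probs} with $A=S$ and $B=G_0$: since $G_0$ is a local minimizer of $\Phi=\Phi_{S\coma\varp}$ on $\cO_\mu\,$, the pair $(I,I)$ is a local minimizer of $\Delta=\Delta^{\varp}_{(S,G_0)}$ on $\matud\times\matud\,$. By Proposition \ref{pro al final commutan nomas} this forces $[S,G_0]=0$. Hence $S$ and $G_0$ admit a common eigenbasis $\{w_i\}_{i\in\I_d}$ with
\[
S=\sum_{i\in\I_d}\alpha_i\ w_i\otimes w_i \py G_0=\sum_{i\in\I_d}\beta_i\ w_i\otimes w_i\ ,
\]
where $(\alpha_i)_{i\in\I_d}$ is a permutation of $\la(S)$ and $(\beta_i)_{i\in\I_d}$ is a permutation of $\mu$.

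\medskip\noindent
The main remaining task is to show that the two sequences $(\alpha_i)$ and $(\beta_i)$ are \emph{anti-ordered}, i.e.\ that $(\alpha_i-\alpha_j)(\beta_i-\beta_j)\le 0$ for every pair $i\ne j$. I would argue by contradiction: suppose indices $i\ne j$ exist with, say, $\alpha_i<\alpha_j$ and $\beta_i<\beta_j\,$. For $\theta$ near $0$, let $R_\theta\in\matud$ act as the standard planar rotation by angle $\theta$ on $\gen\{w_i,w_j\}$ and as the identity on its orthogonal complement. Then $G_\theta\igdef R_\theta^*\,G_0\,R_\theta\in\cO_\mu\,$. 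Because $R_\theta$ fixes $\gen\{w_i,w_j\}$ setwise and commutes with $S$ on the complement, the operator $S+G_\theta$ leaves $\gen\{w_i,w_j\}$ invariant, acts on the complement exactly as $S+G_0$ does, and its $2\times 2$ block on $\gen\{w_i,w_j\}$ has constant trace $\alpha_i+\alpha_j+\beta_i+\beta_j$ and determinant
\[
(\alpha_i+\beta_i)(\alpha_j+\beta_j)+\sin^2\theta\,\cdot\,(\alpha_i-\alpha_j)(\beta_i-\beta_j)\ .
\]
A direct expansion (using $R_\theta\,\mathrm{diag}(\beta_i,\beta_j)\,R_\theta^*$ and the diagonal block $\mathrm{diag}(\alpha_i,\alpha_j)$ of $S$) yields this identity. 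Since the cross term $(\alpha_i-\alpha_j)(\beta_i-\beta_j)$ is \emph{strictly positive} by assumption, for every sufficiently small $\theta>0$ the determinant grows strictly, hence the two eigenvalues of that $2\times 2$ block move strictly closer together while their sum is preserved. Consequently one obtains a strict majorization $\la(S+G_\theta)\prec \la(S+G_0)$ (the other $d-2$ eigenvalues being unaltered), and strict convexity of $\varp\in\convfs$ together with item 1 of Theorem \ref{teo intro prelims mayo} gives $\Phi(G_\theta)<\Phi(G_0)$ for arbitrarily small $\theta>0$, contradicting the local minimality of $G_0\,$. This is the main technical step.

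\medskip\noindent
Once anti-ordering is established, I would reorder the basis $\{w_i\}$ so that $(\alpha_i)_{i\in\I_d}$ is non-decreasing; anti-ordering then forces $(\beta_i)_{i\in\I_d}$ to be non-increasing, which coincides with $\la\ua(S)$ and with $\mu=\mu\da$ respectively, proving Eq. \eqref{eq base buenapp}. Moreover $S+G_0=\sum_{i\in\I_d}(\la_i+\mu_i)\,w_i\otimes w_i\,$, and since $\la\in(\R_{\ge 0}^d)\ua$ and $\mu\in(\R_{\ge 0}^d)\da$, the vector $(\la_i+\mu_i)_{i\in\I_d}$ has decreasing rearrangement exactly $(\la\ua(S)+\la\da(G_0))\da$, proving $\la(S+G_0)=(\la\ua(S)+\la\da(G_0))\da$. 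By Remark \ref{rem Lidskii glob} (the global Lidskii characterization), $G_0$ is a global minimizer of $\Phi$ on $\cO_\mu\,$. The only real obstacle in the argument is the $2\times 2$ rotation computation and the verification that the resulting perturbation indeed lies inside the orbit $\cO_\mu$ and produces a strict majorization; everything else is a direct application of the tools assembled in Section \ref{sec loc lid} and the Appendix.
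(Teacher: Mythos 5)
Your proposal is correct and follows essentially the same route as the paper: the same reduction to $[S,G_0]=0$ via Lemma \ref{lem equiv de los probs} and Proposition \ref{pro al final commutan nomas}, then a rotation in a two-dimensional common eigenspace whose $2\times 2$ block analysis (your determinant identity is correct) produces a strict majorization and hence a strict decrease of the potential whenever two eigenvalue pairs are ordered in the same direction, which is exactly the paper's curve $U(t)$ argument. Two small touch-ups: after sorting $(\alpha_i)$ non-decreasingly, anti-ordering alone does not force $(\beta_i)$ to be non-increasing across ties $\alpha_i=\alpha_j$, so you must also permute the basis vectors within each equal-$\alpha$ block (the paper's transposition step), which is harmless since the representation of $S$ is unchanged; and the strict inequality $\Phi(G_\theta)<\Phi(G_0)$ needs item 3 of Theorem \ref{teo intro prelims mayo} (strict convexity and equality force equal rearrangements), not just item 1.
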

\begin{proof}
By Lemma \ref{lem equiv de los probs} and Proposition \ref{pro al final commutan nomas} we conclude that $[S,G_0]=0$. 
Notice that in this case there exists $\cB=\{v_i\}_{i\in\I_d}$ an ONB of $\C^d$ such that 
$$
S=\sum_{i\in\I_d} \la_i \ v_i\otimes v_i
\ , \ G_0=\sum_{i\in\I_d} \nu_i \ v_i\otimes v_i \ \text{ with }\ \la=(\la_i)_{i\in\I_d}\in (\R_{\geq 0}^d)\ua \ ,
$$ 
for some $\nu_1,\ldots,\nu_d\geq 0$.
We now show that under suitable permutations of the elements of $\cB$ we can obtain a representation as in Eq. \eqref{eq base buenapp} above.
Indeed, assume that $j\in\I_{d-1}$ is such that $\nu_j<\nu_{j+1}$. If we assume that $\la_j<\la_{j+1}$ then consider the continuous curve of unitary operators $U(t):[0,\pi/2)\rightarrow \matud$ given by $$U(t)=\sum_{i\in\I_d\setminus\{j,\,j+1\}}v_i\otimes v_i + \cos(t)\ (v_j\otimes v_j +v_{j+1}\otimes v_{j+1}) +\sin(t)\ (v_j\otimes v_{j+1}- v_{j+1}\otimes v_j)\ , \ \ t\in[0,\pi/2)\,.$$
Notice that $U(0)=I_d$. We now define the continuous curve $G(t)=U(t) \,G_0\,U(t)^* \in\cO_\mu$, for $t\in[0,\pi/2)$.
Then $G(0)=G_0$ and we have that  
\beq\label{eq SFt}
 S+G(t)=\sum_{i\in\I_d\setminus\{j,\,j+1\}} (\la_i+\nu_i) \ v_i\otimes v_i + \sum_{r,s=1}^2 \gamma_{r,s}(t) \ v_{j+r}\otimes v_{j+s}\,,\eeq
where $M(t)=(\gamma_{r,s})_{r,s=1}^2$ is determined by  $$ M(t)
= \begin{pmatrix} \la_j & 0 \\0 & \la_{j+1}\end{pmatrix}+ V(t)\begin{pmatrix}\nu_{j} & 0 \\0 & \nu_{j+1}\end{pmatrix}
V(t)^* \py V(t)=\begin{pmatrix}\cos(t) & \sin(t) \\-\sin(t) & \cos(t)\end{pmatrix} \ , \ \ t\in[0,\pi/2)\,.
$$
Let us consider 
\beq\label{defi R}R(t)=
V^*(t) \begin{pmatrix}
\la_j - \la_{j+1} & 0 \\
0 & 0
\end{pmatrix}V(t)
+ 
\begin{pmatrix}
\nu_j & 0 \\
0 & \nu_{j+1}
\end{pmatrix}
\implies M(t)=V(t) \, R(t)\, V^*(t) + \la_{j+1}\, I_2\,.
\eeq
We claim that $\la(R(t))\prec \la(R(0))$ and $\la(R(t))\neq \la (R(0))$ for $t\in(0,\pi/2)$ (i.e., the majorization relation is strict). Indeed, since 
$R(t)$ is a curve in $\mathcal M_2(\C)^+$ such that $\tr(R(t))$ is constant, it is enough to show that the function $[0,\pi/2)\ni t\mapsto \tr(R(t)^2)$ is strictly decreasing in $[0,\pi/2)$. 
Indeed, since $\la_j-\la_{j+1}>0$ then 
$$ V^*(t) \begin{pmatrix}\la_j - \la_{j+1} & 0 \\ 0 & 0 \end{pmatrix}V(t) = g(t)\otimes g(t) \peso{where} g(t)=(\la_j-\la_{j+1})^{1/2}(\cos(t),\sin(t)) \ , \ \ t\in[0,\pi/2)\,.
$$ If $D\in\cM_2(\C)$ is the diagonal matrix with main diagonal $(\nu_j,\,\nu_{j+1})$ then 
$R(t)=g(t)\otimes g(t)+D$ so 
$$\tr(R(t)^2)=\tr((g(t)\otimes g(t))^2)+\tr(D^2)+2\,\tr(g(t)\otimes g(t)\ D)=c+\langle D\,g(t),\,g(t)\rangle$$
where $c=\|g(t)\|^4+\nu_j^2+\nu_{j+1}^2=(\la_j-\la_{j+1})^2+\nu_j^2+\nu_{j+1}^2\in\R$ is a constant and 
$$\langle D\,g(t),\,g(t)\rangle =(\la_j-\la_{j+1})\ (\cos^2(t) \, \nu_j+\sin^2(t)\,\nu_{j+1})$$ is strictly decreasing in $[0,\pi/2)$, since $\nu_j>\nu_{j+1}$. Thus, $\la(R(t))\prec \la(R(0))$ and $\la(R(t))\neq \la (R(0))$ for $t\in(0,\pi/2)$. Hence, by Eq. \eqref{defi R}, we see that  $$ \la(M(t))=\la(R(t))+\la_{j+1}\,\uno_2 \implies \la(M(t))\prec \la(M(0)) \ , \ \la(M(t))\neq \la(M(0)) \ ,\ \ t\in(0,\pi/2)\,.$$ Using Eq. 
\eqref{eq SFt} and that $\varphi\in\convfs$, we see that for $t\in(0,\pi/2)$
$$ \Phi(G(t))=\sum_{i\in\I_d\setminus\{j,\,j+1\}} \varphi(\la_i+\nu_i)+\tr(\varphi(M(t)))< \sum_{i\in\I_d\setminus\{j,\,j+1\}} \varphi(\la_i+\nu_i)+\tr(\varphi(M(0)))=\Phi(G(0))$$ This last inequality, which is a consequence of the assumption $\la_j<\la_{j+1}$, contradicts the local minimality of $G_0$ in $\cO_\mu$. Hence, since $\la_j\leq \la_{j+1}$ we see that $\la_j=\la_{j+1}$; in this case, we can consider the basis $\cB'=\{v_i'\}_{i\in\I_d}$ obtained by transposing the vectors $v_j$ and $v_{j+1}$ in the basis $\cB$. In this case $S\ v_i'=\la_i\ v_i'$ for $i\in\I_d$, $G_0\ v_i=\nu_i\ v_i'$ for $i\in\I_d\setminus\{j,\,j+1\}$ and $G_0\ v_j'=\nu_{j+1}\, v_j'$, $G_0 v_{j+1}'=\nu_{j}\, v_{j+1}'$. After performing this argument at most $d$ times we get the desired ONB.
\end{proof}



\end{document}